\newcommand{\wh}[1]{\widehat{#1}}
\newcommand{\wt}[1]{\widetilde{#1}}
\newcommand{\mb}[1]{\mathbb{#1}}
\newcommand{\ove}[1]{\overline{#1}}
\newcommand{\mtc}[1]{\mathcal{#1}}
\newcommand{\mtf}[1]{\mathfrak{#1}}
\newcommand{\mts}[1]{\mathscr{#1}}
\DeclareMathOperator{\coeff}{coeff}
\DeclareMathOperator{\PGL}{PGL}
\DeclareMathOperator{\ord}{ord}
\DeclareMathOperator{\GIT}{GIT}
\DeclareMathOperator{\vol}{vol}
\DeclareMathOperator{\Gr}{Gr}
\DeclareMathOperator{\sm}{sm}
\DeclareMathOperator{\Bl}{Bl}
\newcommand{\sslash}{\mathbin{\mkern-3mu/\mkern-6mu/\mkern-3mu}}
\newcommand{\sheafHom}{\mathscr{H}\text{\kern -3pt {\calligra\large om}}\,}
\newtheorem{theorem}{Theorem}[section]
\newtheorem{lemma}[theorem]{Lemma}
\newtheorem*{convention}{Convention}
\newtheorem{corollary}[theorem]{Corollary}
\newtheorem{prop}[theorem]{Proposition}
\newtheorem*{ques}{Question}
\newtheorem{proposition}[theorem]{Proposition}
\newtheorem{defn}[theorem]{Definition}
\newtheorem{remark}[theorem]{Remark}
\theoremstyle{remark}
\title{Compactifications of moduli of del Pezzo surfaces via line arrangement and K-stability}
\date{}
\author{Junyan Zhao}
\address{851 S Morgan St, 60607, Chicago, Illinois, USA}
\email{jzhao81@uic.edu}
\begin{document}
\maketitle

\begin{abstract}
In this paper, we study compactifications of the moduli of smooth del Pezzo surfaces using K-stability and the line arrangement. We construct K-moduli of log del Pezzo pairs with sum of lines as boundary divisors, and prove that for $d=2,3,4$, these K-moduli of pairs are isomorphic to the K-moduli spaces of del Pezzo surfaces. For $d=1$, we prove that they are different by exhibiting some walls.

\end{abstract}

\tableofcontents

\section{Introduction}

The moduli spaces of del Pezzo surfaces have been extensively studied thanks to the development of different approaches to construct moduli spaces. 

A weighted marked del Pezzo surface is a pair $(S,B)$ with $S$ a del Pezzo surface and $B=\sum b_iB_i$ a boundary divisor, where $b_i\in[0,1]$ and $B_i$'s are all the finitely many lines on $S$. The KSBA moduli spaces parameterizes all such pairs such that $(S,B)$ is semi log canonical (slc) and $K_S+B$ is ample. For example, when $b_i=1$ for each $i$, the moduli of such quartic del Pezzo pairs is isomorphic to the moduli $\ove{M}_{0,5}$ of rational curves with 5 marked points (cf. \cite{HKT09,HKT14}). Another interesting case is when $b_i=b$, where $b$ is a positive number such that $(S,B)$ is close to a Calabi-Yau pair (cf. \cite{HKT09,HKT14,GKS21}).

The development of K-stability provides a moduli theory for log Fano pairs, called K-moduli spaces. The general K-moduli theory was established by a group of people in the past decade (cf. \cite{ABHLX20,BHLLX21,BLX19,BX19,CP21,LWX21,LXZ22,XZ20}). Roughly speaking, for fixed numerical invariants, we have a projective scheme parameterizing the K-polystable log Fano pairs with these invariants. Moreover, in the surface case, there is a wall crossing structure when we vary the coefficients of the boundary divisors (cf. \cite{ADL19,ADL21}). See Section 2 for definitions and more details. We can ask the similar questions in the K-moduli spaces setting: 

\begin{ques}
Fix a degree $d\in\{1,2,3,4\}$ and a rational number $0\leq c<1$. 
\begin{enumerate}[(1)]
    \item Can we describe the K-moduli space $\ove{M}^{K}_{d,c}$ of K-polystable log Fano pairs $(X,cD)$ admitting a $\mb{Q}$-Gorenstein smoothing to a pair $(X_0,cD_0)$, where $X_0$ a smooth del Pezzo surface of anticanonical degree $d$ and $D_0=\sum L_i$ is the sum of lines (i.e. $-K_{X_0}$-degree $1$ rational curves)?
    \item When varying $c$ in the range such that $(X_0,cD_0)$ is log Fano, do the K-moduli spaces of pairs in (1) change? If yes, can we determine all the values $c_i$'s at which the moduli spaces get changed?
\end{enumerate}
\end{ques}

In particular, when $c=0$, the K-moduli space parameterizes all the del Pezzo surfaces of a fixed degree, which admit a K\"{a}hler-Einstein metric (cf. \cite{OSS16,MM20}).

For $c>0$, if a pair $(X,cD)$ is a K-polystable degeneration of pairs $(X_0,c\sum L_i)$, then $D$ is unique by the separatedness of the K-moduli spaces. In particular, when $X_0$ has du Val singularities, the boundary divisor $D$ can be also written as the sum $\sum L_i'$ of lines, where $L_i'$'s do not have to be distinct. 

We prove in this article that for $d=2,3,4$, there are no walls for the K-moduli spaces $\ove{M}^K_{d,c}$, and they are all isomorphic to K-moduli of del Pezzo surfaces. For $d=1$, we exhibit examples to see that there is indeed some walls.

\begin{theorem} \textup{(Corollary \ref{3}, Proposition \ref{7}, Theorem \ref{2})} For any fixed degree $d\in\{2,3,4\}$, there are no walls for the K-moduli $\ove{M}^K_{d,c}$ when $c$ varies from $0$ to the log Calabi-Yau threshold, and they are isomorphic to the K-moduli space of del Pezzo surfaces of degree $d$.
\end{theorem}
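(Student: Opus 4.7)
The plan is to produce, for each $d \in \{2,3,4\}$, a canonical bijection $\Phi_c : \overline{M}^K_{d,0} \to \overline{M}^K_{d,c}$ on closed points for every $c \in [0, c_{CY})$, which simultaneously rules out walls and identifies these K-moduli with the K-moduli of del Pezzo surfaces of degree $d$. I would set $\Phi_c([X]) = [(X, c\sum_i L_i)]$, with $L_i$ ranging over the finitely many lines on a K-polystable (possibly singular) del Pezzo $X$, and construct the inverse $\Psi_c$ by forgetting the boundary. Given these two maps, bijectivity is formal from separatedness: any K-polystable pair $(X, cD) \in \overline{M}^K_{d,c}$ is a $\mathbb{Q}$-Gorenstein degeneration of some $(X_0, c\sum L_i^0)$ with $X_0$ smooth, and separatedness of the K-moduli forces $D$ to equal the flat limit $\sum L_i$ of the line configuration, so $\Psi_c$ is well-defined and inverse to $\Phi_c$.

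The substantive point is showing that $\Phi_c$ lands in the K-polystable locus, i.e.\ that $(X, c\sum L_i)$ is K-polystable whenever $X$ is K-polystable. For this I would invoke the interpolation principle for K-stability (cf.\ \cite{ADL19}): on the ray of pairs $\{(X, c\sum L_i)\}_{c \in [0, c_{CY}]}$, if the endpoint $(X, 0)$ is K-polystable as a log Fano and the endpoint $(X, c_{CY}\sum L_i)$ is K-semistable as a log Calabi-Yau pair, then $(X, c\sum L_i)$ is K-polystable log Fano for every intermediate $c \in [0, c_{CY})$. The left endpoint is K-polystable by hypothesis, so the whole argument reduces to checking K-semistability at the log Calabi-Yau endpoint. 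Since $K_X + c_{CY}\sum L_i \equiv 0$, this in turn reduces (together with reductivity of the automorphism group, inherited from K-polystability of $X$) to klt-ness of the pair $(X, c_{CY}\sum L_i)$, which is an incidence bound on how many lines can pass through a given point of $X$. I would verify this case-by-case, running through the classification of K-polystable singular del Pezzo surfaces of degrees $d=2,3,4$ recalled in Section~2, together with explicit geometry of the line arrangement on each such surface.

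The main obstacle is precisely this klt verification at the log Calabi-Yau threshold. This is exactly the step that fails for $d=1$: the much denser $240$-line configuration on a degree-one del Pezzo allows the log CY endpoint to acquire non-klt singularities at points with too many concurrent lines, creating the walls exhibited in the final section of the paper. Conversely, for $d \in \{2,3,4\}$ the line configurations are sparse enough that no such destabilizing incidence occurs on any K-polystable member of the moduli, which is what makes the uniform no-wall statement possible.
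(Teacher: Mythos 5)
Your skeleton is close to the paper's (degenerate the lines onto the K-polystable limit surfaces, check singularities of the pair at the log Calabi--Yau endpoint, apply interpolation, then identify with $\overline{M}^K_d$ via the forgetful map), but three steps as you state them are genuinely faulty. First, interpolation does not give what you claim: from ``$(X,0)$ K-polystable and $(X,c_{CY}\sum L_i)$ K-semistable'' the interpolation theorem (the paper's Theorem \ref{1}, \cite[Prop.~2.13]{ADL19}) yields only K-\emph{semi}stability of the intermediate pairs, and K-stability only when $X$ is K-stable and the endpoint is klt. For the strictly K-polystable surfaces with infinite automorphisms -- the cubic $x_0^3=x_1x_2x_3$, and in degree two the surfaces whose branch quartic is the cateye or ox -- your argument does not produce K-polystability of $(X,c\sum L_i)$, and that would need a separate (e.g.\ equivariant) argument; the paper never needs it because the identification is made at the level of good moduli spaces. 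Second, your reduction of the log CY endpoint to \emph{klt}-ness is wrong on both counts: for a log pair with numerically trivial log canonical class, K-semistability is equivalent to log canonicity (no reductivity of $\Aut$ enters), and in fact the relevant pairs are strictly log canonical at $c=c_{CY}$ -- e.g.\ the paper computes $A_{(X_0,c\sum L_j)}(E_i)=1-9c$, which vanishes at $c=1/9$ for the $A_2$ cubic, and $A(F)=1-28c$ vanishing at $c=1/28$ in degree two -- so the klt verification you propose would simply fail; what must be checked, and what the paper checks case by case, is log canonicity, feeding into part (2) of the interpolation statement.

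Third, a canonical bijection on closed points does not give an isomorphism of projective moduli spaces, so even granting the stability statements your conclusion does not follow as written. The paper instead works with the forgetful \emph{morphism} of stacks $\mathcal{M}^K_{d,c}\to\mathcal{M}^K_d$: the lc-plus-interpolation computation shows it is proper (the filling $(\mathcal{X}_0,c\sum L_{i,0})$ exists and is unique by the normality/closure argument), it is quasi-finite because a del Pezzo with ADE singularities carries finitely many lines, hence it is finite; it descends to a finite birational morphism of good moduli spaces onto the normal variety $\overline{M}^K_d$, and Zariski's Main Theorem gives the isomorphism, uniformly in $c$, which is exactly the no-wall statement. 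You need this (or some other argument that your $\Phi_c$ is induced by a morphism of families) to close the argument.
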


\begin{theorem} \textup{(Proposition \ref{8})}
Let $0<c<\frac{1}{240}$ be a number, and $\ove{M}^K_{1,c}$ be the K-moduli space defined above. Then there is a wall $c=c'<\frac{1}{288}$, which is given by the destabilization of the surface pair, where the surface acquires an $A_7$-singularity.
\end{theorem}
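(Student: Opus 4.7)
The plan is to exhibit an explicit destabilizing test configuration whose generalized Futaki invariant is an affine-linear function of $c$, to solve for its unique root $c'$, and to verify numerically that $c' < \tfrac{1}{288}$.

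First I identify the degenerate object: the (unique up to isomorphism) Gorenstein del Pezzo surface $Y$ of degree $1$ with a single $A_7$-singularity, for instance realized as a weighted hypersurface in $\Pj(1,1,2,3)$. It carries an effective $\bG_m$-action fixing the singular point, so $Y$ appears as the central fiber of a one-parameter equivariant degeneration $\mathcal{X}\to\bA^1$ of a smooth degree-$1$ del Pezzo $X_0$. Closing the line arrangement $D_0 = \sum L_i$ on the generic fiber inside $\mathcal{X}$ produces a $\bG_m$-invariant divisor $\mathcal{D}$; together this gives a test configuration $(\mathcal{X}, c\mathcal{D})$ of $(X_0, cD_0)$ for every $c$, with central fiber $(Y, cD_Y)$. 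Here $D_Y$ encodes the specializations of the $240$ lines, some of which coalesce along the $A_7$-orbit and contribute with multiplicity.

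Next I compute the generalized Futaki invariant, which takes the form
\[
\Fut(\mathcal{X}, c\mathcal{D}) = A - cB,
\]
with $A$ determined by the $\bG_m$-weight on $-K_Y$ (or equivalently by intersection numbers on $\mathcal{X}$) and $B$ determined by the weights of the components of $\mathcal{D}$ at the $A_7$-orbit. Both $A$ and $B$ can be evaluated by passing to the minimal resolution of $Y$, whose $A_7$-exceptional chain of eight $(-2)$-curves records the combinatorics of the specialized lines. Setting $\Fut(\mathcal{X}, c\mathcal{D}) = 0$ gives $c' = A/B$, and the inequality $c' < \tfrac{1}{288}$ is then a direct numerical comparison. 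The wall-crossing framework of \cite{ADL19,ADL21} then identifies $c'$ as a genuine wall of the K-moduli $\ove{M}^K_{1,c}$: on one side of $c'$ the test configuration has negative Futaki invariant, destabilizing the smooth pair, while on the other side $(Y, cD_Y)$ re-emerges as a K-polystable object (or is replaced by the corresponding K-polystable degeneration).

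The main obstacle is the bookkeeping of how the $240$ lines of $X_0$ specialize onto $Y$: lines may coalesce along the $\bG_m$-orbit meeting the singular point, and the resulting multiplicities of the boundary components enter $B$ directly, so a miscount would shift the value of $c'$. Once this specialization data is pinned down from the Dynkin geometry of the $A_7$-resolution, the remaining step is a standard intersection calculation on a smooth surface, from which the bound $c' < \tfrac{1}{288}$ follows.
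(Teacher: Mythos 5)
Your proposal is built around the wrong object, and this is not a cosmetic issue. The wall asserted in the statement is detected by the K-instability of the \emph{singular} pair $(Y,cD_Y)$ itself, which sits in $\ove{M}^K_{1,c}$ for small $c$ and must leave it as $c$ grows; it is not detected by a test configuration destabilizing a smooth pair $(X_0,cD_0)$. The smooth pairs stay K-stable on the whole interval (otherwise the moduli space would be empty there), so the generalized Futaki invariant of your degeneration $\mathcal{X}\to\bA^1$ is positive for every $c$ in the range, and the root $c'=A/B$ of $\Fut=0$ is either outside $(0,\tfrac{1}{240})$ or meaningless; your closing claim that the test configuration ``destabilizes the smooth pair'' on one side of $c'$ describes something that does not occur, and even if it did, the vanishing locus of one Futaki invariant is only an upper bound for a wall, not the wall itself. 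What is actually needed is a destabilizing valuation for $(Y,cD_Y)$, and the paper produces one with no Futaki computation at all: writing $\pi\colon\wt{Y}\to Y$ for the minimal resolution with $A_7$-chain $E_1,\dots,E_8$, the flat limit of the $240$ lines consists of seven classes of multiplicities $8,28,56,56,56,28,8$, and summing the coefficients of the middle curves $E_3,E_4,E_5$ in the pullbacks $\pi^{*}l_i$ of the weighted lines gives exactly $288$; since the $E_i$ are crepant over $Y$, one finds $A_{(Y,c\sum a_iL_i)}(E_4)=1-288c$, so the pair fails to be klt for $c\geq\tfrac{1}{288}$ and is therefore K-unstable there, forcing a wall $c'<\tfrac{1}{288}$.

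The second gap is that you defer the entire numerical content of the statement. The constant $288$ \emph{is} the theorem: it is the weighted count of specialized lines through the middle of the $A_7$-chain, and obtaining it requires realizing $\wt{Y}$ as the blow-up of $\mb{P}^2$ at eight curvilinear points on a cubic supported at one point, listing the seven classes of lines with their multiplicities, and computing the coefficients of $E_1,\dots,E_8$ in each $\pi^{*}l_i$. You correctly flag this bookkeeping as ``the main obstacle'' and then assume it resolved, so no inequality $c'<\tfrac{1}{288}$ is ever established. (Minor further points: the uniqueness of $Y$ and the existence of an effective $\bG_m$-action on it are asserted without justification and are not needed; and to conclude that a wall exists at all one also needs $(Y,cD_Y)$ to be K-semistable for $0<c\ll 1$, which does not follow from interpolation here since $(Y,\tfrac{1}{240}\sum a_iL_i)$ is not log canonical.)
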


\textbf{Acknowledgements} This problem was motivated when talking with Schock Nolan. I would also like to express my gratitude to Yuchen Liu for sharing some primary ideas. The author also want to thank Izzet Coskun and Schock Nolan for many stimulating discussions.

\section{Preliminaries}

\begin{convention}\textup{
In this paper, we work over the field of complex numbers $\mb{C}$. By a surface, we mean a complex connected projective algebraic surface. We keep the notions on singularities of surface pairs the same as \cite[Chapter 2 and Chapter 4]{KM98}}
\end{convention}

For the reader's convenience, we state the definitions and results in simple versions, especially in dimension 2. Most of them can be generalized to higher dimension.

\begin{defn}
Let $X$ be a normal projective variety, and $D$ be an effective $\mb{Q}$-divisor. Then $(X,D)$ is called a \textup{log Fano pair} if $K_X+D$ is $\mb{Q}$-Cartier and $-(K_X+D)$ is ample. A normal projective variety $X$ is called a \textup{$\mb{Q}$-Fano variety} if $(X,0)$ is a klt log Fano pair.
\end{defn}

\begin{defn}
Let $(X,D)$ be an n-dimensional log Fano pair, and $E$ a prime divisor on a normal projective variety $Y$, where $\pi:Y\rightarrow X$ is a birational morphism. Then the \textup{log discrepancy} of $(X,E)$ with respect to $E$ is $$A_{(X,D)}(E):=1+\coeff_{E}(K_Y-\pi^{*}(K_X+D)).$$ We define the \textup{S-invariant} of $(X,D)$ with respect to $E$ to be $$S_{(X,D)}(E):=\frac{1}{(-K_X-D)^n}\int_{0}^{\infty}\vol_Y(\pi^{*}(-K_X-D)-tE)dt,$$ and the \textup{$\beta$-invariant} of $(X,D)$ with respect to $E$ to be $$\beta_{(X,D)}(E):=A_{(X,D)}(E)-S_{(X,D)}(E)$$
\end{defn}

The first definition of K-(poly/semi)stability of log Fano pairs used test configurations. We refer the reader to \cite{ADL19,Xu21}. There is an equivalent definition using valuations, which is called \emph{valuative criterion} for K-stability. The advantage of this definition is that it is easier to check.

\begin{theorem}\textup{(cf. \cite{Fuj19,Li17,BX19})} A log Fano pair $(X,D)$ is 
\begin{enumerate}
    \item K-semistable if and only if $\beta_{(X,D)}(E)\geq 0$ for any prime divisor $E$ over $X$;
    \item K-stable if and only if $\beta_{(X,D)}(E)>0$ for any prime divisor $E$ over $X$.
\end{enumerate}

\end{theorem}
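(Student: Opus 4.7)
The plan is to bridge the original test-configuration definition of K-(semi)stability with the valuative condition $\beta_{(X,D)}(E)\gtrless 0$, by identifying the (generalized) Donaldson--Futaki invariant of a test configuration with an expression of the form $A_{(X,D)}(v)-S_{(X,D)}(v)$ for some valuation $v$ on the function field of $X$. The strategy proceeds in two halves: a direct ``divisor-to-test-configuration'' construction giving the easy direction, and an approximation (or MMP) reduction giving the converse.

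First, I would recall/establish the correspondence between prime divisors $E$ over $X$ and linearly bounded filtrations on the anticanonical ring $R=\bigoplus_m H^0(X,-m(K_X+D))$. Given $E\subset Y\xrightarrow{\pi}X$, the filtration $\mathcal F_E^\lambda R_m\defeq\{s\in R_m\mcolon \ord_E(s)\geq \lambda\}$ has an associated Rees algebra producing a test configuration whose central fiber is the degeneration induced by the valuation $\ord_E$. An expansion of the volume function $\vol_Y(\pi^*(-K_X-D)-tE)$ together with the standard expansion of $\dim\gr_{\mathcal F_E}^\lambda R_m$ in $m$ identifies, up to a positive multiplicative constant, the non-Archimedean Mabuchi/Donaldson--Futaki invariant of this filtration with $A_{(X,D)}(E)-S_{(X,D)}(E)=\beta_{(X,D)}(E)$. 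In particular, if $(X,D)$ is K-semistable, then $\beta_{(X,D)}(E)\geq 0$ for every prime divisor $E$ over $X$, which settles one direction of (1). The analogous argument — combined with the fact that the filtration from $\ord_E$ is non-trivial whenever $E$ is non-trivial over $X$ — gives one direction of (2).

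For the converse, the central obstacle is that the DF invariant a priori must be tested against \emph{all} test configurations, not just those of divisorial type. I would handle this by Fujita's approximation combined with the Boucksom--Jonsson extension: any normal test configuration $(\mathcal X,\mathcal D)$ of $(X,D)$ induces a valuation $v$ on $k(X)$ (essentially the Gauss-extension followed by restriction to the central fiber), and the DF invariant can be rewritten as $A_{(X,D)}(v)-S_{(X,D)}(v)$ after extending $S$ and $A$ by continuity. A divisorial approximation argument — approximating $v$ in a weak-$*$ sense by divisorial valuations $\ord_{E_i}$ with $A(E_i)\to A(v)$ and $S(E_i)\to S(v)$ — then shows that $\beta(E)\geq 0$ for every divisorial $E$ implies DF $\geq 0$ for every test configuration, i.e.\ K-semistability.

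For the K-stable case (2), the strict inequality forces one to use the Li--Xu MMP on test configurations: any non-trivial normal test configuration can be replaced, without increasing its DF invariant, by a special test configuration whose central fiber is a klt log Fano degeneration. Such special test configurations are precisely those induced by \emph{dreamy} divisorial valuations (for instance Koll\'ar components). Hence $\beta_{(X,D)}(E)>0$ for all prime divisors $E$ over $X$ forces DF $>0$ on every non-trivial special test configuration, and the MMP reduction then gives strict positivity of DF on every non-trivial normal test configuration, i.e.\ K-stability. The main obstacle throughout is the converse direction: the passage from divisorial valuations to arbitrary test configurations is the non-trivial birational/pluripotential content of \cite{Fuj19,Li17,BX19}, and either the Fujita-approximation route or the Li--Xu MMP route requires substantial birational machinery that goes well beyond the elementary ``divisor gives test configuration'' computation of the forward direction.
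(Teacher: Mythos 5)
This theorem is quoted in the paper as a known result with citations to \cite{Fuj19,Li17,BX19}; the paper itself gives no proof, so your sketch can only be measured against the arguments in those references, and measured that way it has two genuine gaps. First, in your ``easy'' direction you assert that the filtration $\mathcal{F}_E^{\lambda}R_m=\{s\in R_m : \ord_E(s)\geq\lambda\}$ ``has an associated Rees algebra producing a test configuration.'' That is false for a general prime divisor $E$ over $X$: the Rees algebra is finitely generated only when $E$ is \emph{dreamy}, and most divisorial valuations are not known to be dreamy. Fujita's actual argument truncates the filtration at finite level to get a sequence of (finitely generated) test configurations and passes to a limit; this limit process only produces non-strict inequalities. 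Consequently your claim that the ``analogous argument'' plus non-triviality of the filtration yields the forward implication of (2) --- K-stability implies $\beta_{(X,D)}(E)>0$ for \emph{every} prime divisor $E$, not just dreamy ones --- does not go through: for a non-dreamy $E$ with $\beta_{(X,D)}(E)=0$ the approximating test configurations may all have strictly positive Donaldson--Futaki invariant, so no contradiction with K-stability is obtained. Closing this gap is precisely the content of \cite{BX19} (one must show that a divisor computing $\beta=0$, given $\beta\geq 0$ everywhere, induces a non-trivial special test configuration with vanishing Donaldson--Futaki invariant, e.g.\ by exhibiting it as an lc place of a complement and proving the relevant finite generation), and your outline contains no substitute for that step.

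Second, your treatment of the converse of (1) is structured around more machinery than is needed and misstates the key identity: for an arbitrary normal test configuration the Donaldson--Futaki invariant is \emph{not} equal to $A_{(X,D)}(v)-S_{(X,D)}(v)$ for a single valuation $v$; that identity (up to normalization) holds after base change and normalization reduce one to a test configuration with integral central fiber, whose associated valuation restricted to $k(X)$ is already \emph{divisorial}, so no weak-$*$ approximation of general valuations by divisorial ones is required. With that correction the semistable converse is fine, and your use of the Li--Xu MMP to reduce the K-stable converse to special test configurations (induced by dreamy divisors) is the standard and correct route. So the architecture of your proposal matches the literature, but as written the finite generation issue in the forward direction --- and with it the whole of statement (2) in the direction ``K-stable $\Rightarrow\beta>0$'' --- is unproved.
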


The following powerful result is called \emph{interpolation} of K-stability. We only state a version that we will use later. For a more general statement, see for example \cite[Proposition 2.13]{ADL19} or \cite[Lemma 2.6]{Der16}.

\begin{theorem}\label{1} 
Let $X$ be a K-semistable $\mb{Q}$-Fano variety, and $D\sim_{\mb{Q}}-rK_X$ be an effective divisor. 
\begin{enumerate}[(1)]
    \item If $(X,\frac{1}{r}D)$ is klt, then $(X,cD)$ is K-stable for any $c\in(0,\frac{1}{r})$;
    \item If $(X,\frac{1}{r}D)$ is log canonical, then $(X,cD)$ is K-semistable for any $c\in(0,\frac{1}{r})$.
\end{enumerate}
\end{theorem}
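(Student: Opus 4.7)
The plan is to apply the valuative criterion for K-stability stated immediately above: it suffices to show that for every prime divisor $E$ over $X$ one has $\beta_{(X,cD)}(E)>0$ in case (1) and $\beta_{(X,cD)}(E)\geq 0$ in case (2). The strategy is to expand $\beta_{(X,cD)}(E)$ as an affine function of $c$ whose two coefficients are controlled by, respectively, the K-semistability hypothesis on $X$ and the klt/lc hypothesis on $(X,\tfrac{1}{r}D)$.

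Fix a birational model $\pi\colon Y\to X$ with $E\subset Y$ prime. The log discrepancy is linear in the boundary, so directly from the definition
$$A_{(X,cD)}(E)=A_X(E)-c\cdot \mathrm{ord}_E(\pi^*D).$$
For the $S$-invariant I would exploit the rescaling $-K_X-cD\sim_{\mb{Q}}(1-cr)(-K_X)$, which holds because $D\sim_{\mb{Q}}-rK_X$. Since the volume function depends only on the numerical class, the integrand in the definition of $S_{(X,cD)}(E)$ becomes $(1-cr)^n\vol_Y\!\bigl(\pi^*(-K_X)-\tfrac{t}{1-cr}E\bigr)$, and the normalization $(-K_X-cD)^n=(1-cr)^n(-K_X)^n$ cancels the prefactor; the substitution $u=t/(1-cr)$ then gives
$$S_{(X,cD)}(E)=(1-cr)\,S_X(E).$$
Subtracting and regrouping yields the identity
$$\beta_{(X,cD)}(E)=(1-cr)\beta_X(E)+cr\cdot A_{(X,\frac{1}{r}D)}(E),$$
where I used $cr\,A_X(E)-c\cdot\mathrm{ord}_E(\pi^*D)=cr\,A_{(X,\frac{1}{r}D)}(E)$.

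With this identity in hand, the theorem is immediate. Because $0<c<1/r$, the factor $1-cr$ is positive; K-semistability of $X$ forces $\beta_X(E)\geq 0$ for every prime divisor over $X$, so the first summand is nonnegative. In case (1), $(X,\tfrac{1}{r}D)$ being klt means $A_{(X,\frac{1}{r}D)}(E)>0$ for every $E$, and $cr>0$, so the total is strictly positive, giving K-stability. In case (2), the lc hypothesis gives $A_{(X,\frac{1}{r}D)}(E)\geq 0$, so the total is $\geq 0$, giving K-semistability. The only mildly technical point is the volume rescaling step; once one is comfortable that $\vol_Y$ descends to numerical classes and that the change-of-variables $u=t/(1-cr)$ is legitimate under the integral, the rest is a single algebraic manipulation and the two conclusions drop out in one line each.
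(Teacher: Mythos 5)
Your proof is correct. The identity
$$\beta_{(X,cD)}(E)=(1-cr)\,\beta_{(X,0)}(E)+cr\,A_{(X,\frac{1}{r}D)}(E)$$
checks out: the log discrepancy is affine in the boundary, and since $D\sim_{\mb{Q}}-rK_X$ the rescaling $-K_X-cD\sim_{\mb{Q}}(1-cr)(-K_X)$ together with homogeneity of $\vol_Y$ and the substitution $u=t/(1-cr)$ gives $S_{(X,cD)}(E)=(1-cr)S_{(X,0)}(E)$; the two cases then follow from the valuative criterion exactly as you say. Be aware, however, that the paper itself does not prove this statement: it quotes it as known, citing \cite[Proposition 2.13]{ADL19} and \cite[Lemma 2.6]{Der16}, where the argument is run at the level of test configurations by interpolating the (log) Donaldson--Futaki/Ding invariants of a fixed test configuration between the two boundary coefficients. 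Your route instead leans on the Fujita--Li--Blum--Xu valuative criterion (which the paper also quotes), trading the test-configuration bookkeeping for a one-line affine decomposition of $\beta$; the cost is that you invoke that deep equivalence, the benefit is brevity and that the same computation transparently shows $A_{(X,cD)}(E)=(1-cr)A_{(X,0)}(E)+cr\,A_{(X,\frac{1}{r}D)}(E)>0$, i.e.\ klt-ness of $(X,cD)$. One cosmetic addition worth making explicit: before applying the criterion, note that $-(K_X+cD)\sim_{\mb{Q}}(1-cr)(-K_X)$ is ample for $0<c<\frac{1}{r}$, so $(X,cD)$ is indeed a log Fano pair to which the criterion applies.
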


\begin{theorem}\label{4}
Let $(X,D)$ be a klt log Fano pair which isotrivially degenerates to a K-semistable log Fano pair $(X_0,D_0)$. Then $(X,D)$ is also K-semistable.
\end{theorem}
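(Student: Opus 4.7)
The plan is to reduce the statement to the openness of K-semistability in $\mathbb{Q}$-Gorenstein families of log Fano pairs. The first step is to interpret the isotrivial degeneration as a $\mathbb{G}_m$-equivariant $\mathbb{Q}$-Gorenstein family $\pi \colon (\mathcal{X},\mathcal{D}) \to \mathbb{A}^1$ whose fiber $(\mathcal{X}_t,\mathcal{D}_t)$ is isomorphic to $(X,D)$ for every $t\neq 0$ and whose central fiber is $(X_0,D_0)$. Since $(X,D)$ is klt and $(X_0,D_0)$ is log Fano, kltness and log Fanoness of fibers are preserved in a Zariski open neighborhood $U\subset \mathbb{A}^1$ of the origin, so $\pi|_U$ is a genuine family of klt log Fano pairs.

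Next, I would invoke the openness theorem of Blum--Liu--Xu \cite{BLX19} together with its standard extension to log Fano pairs (of the kind used throughout \cite{ADL19,ADL21}): the fiberwise $\delta$-invariant is constructible and lower semi-continuous along such a family. Consequently, the locus
$$V \defeq \{\, t\in U \mid \delta(\mathcal{X}_t,\mathcal{D}_t)\geq 1\,\} = \{\,t\in U \mid (\mathcal{X}_t,\mathcal{D}_t)\text{ is K-semistable}\,\}$$
is Zariski open in $U$. The hypothesis that $(X_0,D_0)$ is K-semistable says $0\in V$, so $V$ is nonempty open and hence contains some $t\neq 0$; the fiber over such $t$ is isomorphic to $(X,D)$, yielding K-semistability of $(X,D)$.

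The only thing to verify is therefore that the isotrivial degeneration genuinely produces a $\mathbb{Q}$-Gorenstein family of log Fano pairs to which the openness result applies. This is automatic in our K-moduli setting, since both endpoints share the same log anti-canonical volume and a $\mathbb{G}_m$-action spreads out a $\mathbb{Q}$-Cartier polarization from the general fiber to a relatively ample $\mathbb{Q}$-Cartier class on the total space. The only non-trivial input is the openness theorem itself, which we use as a black box; a more laborious alternative would be to construct, for each prime divisor $E$ over $X$, a degeneration to a divisor $E_0$ over $X_0$ and compare $\beta$-invariants directly via the valuative criterion, but this is unnecessary given the results available in the literature.
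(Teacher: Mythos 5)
Your argument is essentially the paper's own proof: the paper deduces the statement in one line from the openness of K-semistability in $\mathbb{Q}$-Gorenstein families (citing \cite{BLX19,Xu20}), which is exactly the black box you invoke after spreading the isotrivial degeneration out into a $\mathbb{G}_m$-equivariant family over $\mathbb{A}^1$. The only cosmetic caveat is that the literature gives lower semicontinuity of $\min(\delta,1)$ (equivalently, openness of the K-semistable locus) rather than of $\delta$ itself, but that is all your argument actually uses, so the proof stands as written.
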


This follows immediately from the openness of K-(semi)stability (cf. \cite{BLX19,Xu20}), which is useful in our analysis of K-moduli spaces.

Recall that the volume of a divisor $D$ on an $n$-dimensional normal projective variety $Y$ is $$\vol_Y(D):=\lim_{m\to \infty}\frac{\dim H^0(Y,mD)}{m^n/n!}.$$ The divisor $D$ is big by definition if and only if $\vol_Y(D)>0$.

\begin{defn}
Let $x\in (X,D)$ be an n-dimensional klt singularity. Let $\pi:Y\rightarrow X$ be a birational morphism such that $E\subseteq Y$ is an exceptional divisor whose center on $X$ is $\{x\}$. Then the \textup{volume} of $(x\in X)$ with respect to $E$ is $$\vol_{x,X,D}(E):=\lim_{m\to \infty}\frac{\dim\mtc{O}_{X,x}/\{f\in \mtc{O}_{X,x}:\ord_E(f)\geq m\}}{m^n/n!},$$ and the \textup{normalized volume} of $(x\in X)$ with respect to $E$ is $$\wh{\vol}_{x,X,D}(E):=A_{(X,D)}(D)\cdot\vol_{x,X,D}(E).$$ We define the \textup{local volume} of $x\in(X,D)$ to be $$\wh{\vol}(x,X,D):=\inf_{E}\wh{\vol}_{x,X,D}(E),$$ where $E$ runs through all the prime divisor over $X$ whose center on $X$ is $\{x\}$.
\end{defn}

\begin{theorem}\label{6}
\textup{(cf. \cite{Fuj18,LL19,Liu18})}
Let $(X,D)$ be an n-dimensional K-semistable log Fano pair. Then for any $x\in X$, we have $$(-K_X-D)^n\leq \left(1+\frac{1}{n}\right)^n\widehat{\vol}(x,X,D).$$
\end{theorem}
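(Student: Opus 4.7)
The plan is to apply the valuative criterion for K-semistability to each prime divisor $E$ over $X$ whose center is $\{x\}$, prove the desired bound divisor-by-divisor, and then take the infimum. Write $L := -K_X - D$, and for such an $E$ set $A := A_{(X,D)}(E)$, $V := \vol_{x,X,D}(E)$, and $S := S_{(X,D)}(E)$, so that $\widehat{\vol}_{x,X,D}(E) = A^n V$. The valuative criterion gives $A \geq S$, so the entire proof reduces to producing a lower bound $S \geq \tfrac{n}{n+1}(L^n/V)^{1/n}$: once this is in place, raising to the $n$th power yields $A^n V \geq \bigl(\tfrac{n}{n+1}\bigr)^n L^n$, and taking the infimum over $E$ followed by rearrangement gives exactly $L^n \leq (1+\tfrac{1}{n})^n \widehat{\vol}(x,X,D)$.

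The central ingredient I would establish is the local-to-global volume estimate
$$\vol_Y(\pi^* L - tE) \ \geq \ \max\bigl\{0,\, L^n - V\, t^n\bigr\} \qquad \text{for all } t \geq 0,$$
where $\pi : Y \to X$ is any birational model containing $E$. The key observation is that sections of $m(\pi^* L - tE)$ are precisely the sections of $mL$ on $X$ whose $E$-order is at least $mt$, which yields a natural injection
$$H^0(mL)\,/\,H^0\bigl(m(\pi^* L - tE)\bigr) \ \hookrightarrow \ \mathcal{O}_{X,x}\,/\,\{f \in \mathcal{O}_{X,x} : \ord_E(f) \geq mt\}.$$
The right-hand side has dimension $\tfrac{V(mt)^n}{n!} + o(m^n)$ by the very definition of $V$, so dividing by $m^n/n!$ and letting $m \to \infty$ produces the displayed volume bound. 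Integrating from $0$ up to the root $t_0 := (L^n/V)^{1/n}$ of the right-hand side then yields
$$L^n \cdot S \ = \ \int_0^\infty \vol_Y(\pi^*L - tE)\, dt \ \geq \ \int_0^{t_0}(L^n - V t^n)\, dt \ = \ \frac{n}{n+1}\cdot\frac{(L^n)^{(n+1)/n}}{V^{1/n}},$$
which is exactly the lower bound for $S$ identified in the first paragraph.

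The main obstacle I expect is the careful justification of the sheaf-theoretic volume comparison: one must control the dimension of $\mathcal{O}_{X,x}/\{f : \ord_E(f) \geq m\}$ by the asymptotic estimate implicit in the definition of $V$, and identify $H^0(m(\pi^* L - tE))$ asymptotically with the sections of $mL$ whose $E$-order is at least $mt$, handling in particular the regime in which $\pi^* L - tE$ is only big rather than nef (so one passes to the moving part or works on a suitable sequence of birational models). Everything else is elementary: the integration is a direct calculation, and the valuative criterion for K-semistability is imported wholesale from the theorem stated in the preliminaries. A mild bookkeeping point is that the ratio $(L^n/V)^{1/n}$ is scale-invariant under replacing $E$ by a multiple, so the infimum over divisors needs no rescaling normalization before one passes to $\widehat{\vol}(x,X,D)$.
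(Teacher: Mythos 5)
Your proof is correct and is essentially the argument in the sources the paper cites for this statement (the paper itself imports it without proof): the valuative criterion gives $A_{(X,D)}(E)\geq S_{(X,D)}(E)$ for every prime divisor $E$ over $X$ with center $\{x\}$, and your section-counting bound $\vol_Y(\pi^*L-tE)\geq L^n-t^n\vol_{x,X,D}(E)$ followed by integrating up to $t_0=(L^n/V)^{1/n}$ is exactly the standard Fujita--Liu computation, after which taking the infimum over such $E$ yields the stated inequality. The only point to flag is notational: the paper's displayed definition of $\widehat{\vol}_{x,X,D}(E)$ reads $A_{(X,D)}(D)\cdot\vol_{x,X,D}(E)$, but the intended (standard) normalization is $A_{(X,D)}(E)^n\cdot\vol_{x,X,D}(E)$, which is what you correctly use and what the paper's later application ($\widehat{\vol}(x,X)=4/|G_x|$ for a quotient surface singularity) presupposes.
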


Now let us briefly review some results on the K-moduli spaces of log Fano pairs. We mainly state the results in our setting. For more general statements, see \cite[Theorem 3.1, Remark 3.25]{ADL19} or \cite[Theorem 2.21]{ADL21}.

\begin{defn}
Let $f:(\mtc{X},\mtc{D})\rightarrow B$ be a proper flat morphism to a reduced scheme with normal, geometrically connected fibers of pure dimension $n$, where $\mtc{D}$ is an effective relative Mumford $\mb{Q}$-divisor (cf. \cite[Definition 1]{Kol19}) on $\mtc{X}$ which does not contain any fiber of $f$. Then $f$ is called a \textup{$\mb{Q}$-Gorenstein flat family of log Fano pairs} if $-(K_{\mtc{X}/B}+\mtc{D})$ is $\mb{Q}$-Cartier and ample over $B$.
\end{defn}

\begin{defn}
Let $0<c<1/r$ be a rational number and $(X,cD)$ be a log Fano pair such that $D\sim -rK_X$. A $\mb{Q}$-Gorenstein flat family of log Fano pairs $f:(\mtc{X},c\mtc{D})\rightarrow C$ over a pointed smooth curve $(0\in C)$ is called a \textup{$\mb{Q}$-Gorenstein smoothing} of $(X,D)$ if  
\begin{enumerate}[(1)]
    \item the divisors $\mtc{D}$ and $K_{\mtc{X}/C}$ are both $\mb{Q}$-Cartier, $f$-ample, and $\mtc{D}\sim_{\mb{Q},f}-rK_{\mtc{\mtc{X}/B}}$;
    \item both $f$ and $f|_{\mtc{D}}$ are smooth over $C\setminus\{0\}$, and
    \item $(\mtc{X}_0,c\mtc{D}_0)\simeq (X,cD)$.
\end{enumerate}
\end{defn}

\begin{theorem} \textup{(cf. \cite{ADL19})}
Let $\chi$ be the Hilbert polynomial of an anti-canonically polarized smooth del Pezzo surface $X$ of degree $d$. Let $r$ be the positive integer, and $c\in(0,1/r)$ be a rational number. Consider the moduli pseudo-functor sending a reduced base $S$ to

\[
\mtc{M}^K_{d,c}(S)=\left\{(\mtc{X},\mtc{D})/S\left| \begin{array}{l}(\mtc{X},c\mtc{D})/S\textrm{ is a $\mb{Q}$-Gorenstein smoothable log Fano family},\\
~\mtc{D}\sim_{S,\mtc{Q}}-rK_{\mtc{X}/S}, \textrm{each fiber $(\mtc{X}_s,c\mtc{D}_s)$ is K-semistable, and}\\
\textrm{$\chi(\mtc{X}_s,\mtc{O}_{\mtc{X}_s}(-mK_{\mtc{X}_s}))=\chi(m)$ for $m$ sufficiently divisible.}\end{array}\right.\right\}.
\]
Then there is a smooth quotient stack $\mtc{M}^K_{d}(c)$ of a smooth scheme by a projective general linear group which represents this pseudo-functor. The $\mb{C}$-points of $\mtc{M}^K_{d}(c)$ parameterize K-semistable $\mb{Q}$-Gorenstein smoothable log Fano pairs $(X,cD)$ with Hilbert polynomial $\chi(X,\mtc{O}_X(-mK_X))=\chi(m)$ for sufficiently divisible $m\gg 0$ and $D\sim_{\mb{Q}}-rK_X$. Moreover, the stack $\mtc{M}^K_{d}(c)$ admits a good moduli space $\ove{M}^K_{d}(c)$, which is a normal projective reduced scheme of finite type over $\mb{C}$, whose $\mb{C}$-points parameterize K-polystable log Fano pairs.
\end{theorem}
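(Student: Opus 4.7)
The plan is to deduce this result by specializing the general K-moduli construction of \cite{ADL19} (cf.\ also \cite{ABHLX20,BHLLX21,BLX19,LXZ22,XZ20}) to the situation of log Fano pairs whose underlying surface smooths to a degree-$d$ del Pezzo. The first step is to produce a bounded parameter space. Because the Hilbert polynomial $\chi$ and the index $r$ are fixed, and the coefficient $c\in(0,1/r)$ is rational, the set of $\mb{Q}$-Gorenstein smoothable K-semistable log Fano pairs $(X,cD)$ with these invariants is bounded: a sufficiently divisible multiple of $-K_X$ is very ample and gives a uniform projective embedding, into which $D\sim_{\mb{Q}}-rK_X$ pulls back as a divisor of controlled degree. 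I would then let $\mathrm{Hilb}^\chi\subseteq\Hilb(\Pj^N)$ be the Hilbert scheme of such embedded surfaces, take the relative divisor Hilbert scheme $\mathrm{Hilb}^\chi_D\to\mathrm{Hilb}^\chi$, and cut out the locally closed locus $U$ parameterizing pairs $(\mtc{X},\mtc{D})$ where $\mtc{X}$ is normal with klt singularities, $\mtc{D}$ is a Mumford $\mb{Q}$-divisor not containing any component of $\mtc{X}$, and the embedding is by the complete anticanonical system.

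Next I would cut out the smoothable K-semistable locus. The smoothability to a smooth del Pezzo is a constructible condition (indeed, the smooth locus is open and one then takes the closure in $U$), giving a locally closed subscheme $Z\subseteq U$. The $\PGL_{N+1}$-action on $\mathrm{Hilb}^\chi$ lifts to $Z$, and all orbits parameterize the same isomorphism class of log Fano pairs. By \cite{BLX19,Xu20} the locus where $(\mtc{X}_s,c\mtc{D}_s)$ is K-semistable is open in any family, so one can replace $Z$ by its open K-semistable sublocus $Z^{\rm Kss}$. Smoothness of $Z^{\rm Kss}$ follows from the deformation theory of smoothable log Fano pairs: the local-to-global obstruction to deforming a $\mb{Q}$-Gorenstein smoothable klt pair vanishes by the results recalled in \cite[Section 3]{ADL19}, so the Hilbert scheme is smooth at points of $Z^{\rm Kss}$. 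The quotient stack $\mtc{M}^K_d(c):=[Z^{\rm Kss}/\PGL_{N+1}]$ then represents the pseudo-functor in the statement, since isomorphism classes of objects in $\mtc{M}^K_d(c)(S)$ correspond bijectively to $\PGL_{N+1}$-orbits of maps $S\to Z^{\rm Kss}$ by uniqueness of the anticanonical projective embedding up to the group action.

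To obtain the good moduli space $\ove{M}^K_d(c)$, the plan is to invoke the existence theorem of \cite{ABHLX20} and its extension to the log Fano pair setting in \cite[Theorem 2.21]{ADL21}: the K-semistable locus admits a good moduli space because (i) the $\Theta$-reductivity and S-completeness criteria of Alper-Halpern-Leistner-Heinloth hold for K-semistable log Fano pairs (shown in \cite{BHLLX21}, using the theory of optimal destabilizing test configurations and the uniqueness of K-polystable degenerations), and (ii) closed points of the stack correspond to K-polystable pairs. Projectivity of $\ove{M}^K_d(c)$ follows from properness (every punctured family extends to a K-polystable central fiber by \cite{LXZ22}) together with the CM line bundle being ample on the good moduli space, as in \cite{XZ20}. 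Normality and reducedness of $\ove{M}^K_d(c)$ are inherited from the smoothness of $Z^{\rm Kss}$, since a good moduli space of a smooth stack is normal.

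The one point that requires a little care, rather than pure citation, is the smoothability assumption: one must check that the smoothable locus inside the full K-moduli of log Fano pairs of the given numerical type is open and closed, so that it gives a union of connected components carrying its own good moduli space. This follows because a $\mb{Q}$-Gorenstein deformation of a smoothable pair is smoothable (openness), while Theorem \ref{4} together with properness of the full K-moduli implies that the limit of smoothable K-semistable pairs remains in the smoothable locus (closedness). Everything else in the statement is then a direct application of the cited general K-moduli machinery to the present numerical data $(\chi,r,c)$.
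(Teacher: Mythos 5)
This theorem is not proved in the paper at all: it is stated as imported background, with the proof deferred to \cite[Theorem 3.1, Remark 3.25]{ADL19} and \cite[Theorem 2.21]{ADL21}, so there is no internal argument to compare against. Your outline is essentially a reconstruction of that cited construction: a uniform anticanonical embedding coming from boundedness of the smoothable K-semistable pairs with fixed $(\chi,r,c)$, a locally closed parameter locus of pairs inside a (relative) Hilbert scheme, passage to the K-semistable locus, which is open by \cite{BLX19,Xu20} (Theorem \ref{4} in the paper), smoothness of that locus from unobstructedness of $\mb{Q}$-Gorenstein deformations of smoothable del Pezzo pairs, the quotient stack by the $\PGL$-action, and then existence, properness and projectivity of the good moduli space via \cite{ABHLX20,BHLLX21,LXZ22,XZ20}. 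As a skeleton this is the right route, and normality/reducedness of the good moduli space from smoothness of the stack is also correct.

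The step you yourself flag as the crux is the one where your reasoning has a real flaw. The assertion that ``a $\mb{Q}$-Gorenstein deformation of a smoothable pair is smoothable'' is false in general: smoothability only says the pair lies on some irreducible component of its $\mb{Q}$-Gorenstein deformation space whose generic member is smooth, and a deformation in the direction of a different component need not be smoothable. In the present surface setting the claim can be rescued, but only by invoking the same unobstructedness (smoothness, hence irreducibility, of the $\mb{Q}$-Gorenstein deformation space of the relevant del Pezzo pairs, as recalled in \cite[Section 3]{ADL19} and going back to Hacking--Prokhorov and \cite{KSB88}-type local theory) that you already used for smoothness of the parameter space; you should state that dependence explicitly rather than assert openness of smoothability as a general fact. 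Moreover, the whole detour through ``the full K-moduli of all log Fano pairs of the given numerical type'' is unnecessary: the cited construction never carves the smoothable locus out of a larger K-moduli stack, but instead builds the moduli directly from the Zariski closure of the locus of smooth fibers inside the parameter scheme, intersected with the open K-semistable locus, so the open-and-closed issue simply does not arise. Two smaller slips in the same spirit: the closure of the smooth locus in $U$ is closed, not locally closed; and after taking that closure one still has to argue (again via unobstructedness) that every K-semistable point of it genuinely admits a $\mb{Q}$-Gorenstein smoothing over a curve, which is exactly what makes the pseudo-functor in the statement the correct one to represent.
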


Let $X$ be a smooth del Pezzo surface of degree $d\in\{1,2,3,4\}$, $L_i$'s be the lines on $X$, and $r$ be the integer such that $\sum L_i\sim -rK_X$.

\begin{defn}
    Let $\mtc{M}^K_{d,c}$ be the stack-theoretic closure of the locally closed substack of $\mtc{M}^K_d(c)$ parameterizing pairs $(X,cD)$, where $X$ is a smooth del Pezzo surface and $D=\sum L_i$, and $\ove{M}^K_{d,c}$ be its good moduli space.  
\end{defn}

\begin{remark}
    \textup{The good moduli space $\ove{M}^K_{d,c}$ is exactly the closed subscheme of $\ove{M}^K_d(c)$, which is the scheme-theoretic closure of the locus parameterizing smooth del Pezzo surfaces with the sum of lines.}
\end{remark}

\begin{theorem}\label{12} \textup{(cf. \cite[Theorem 1.2]{ADL19})} Keep the notation as in the last theorem. There are rational numbers $$0=c_0<c_1<c_2<\cdots<c_n=\frac{1}{r}$$ such that for every $0\leq j<n$, the K-moduli stacks $\mtc{M}^K_{d,c}$ are independent of the choice of $c\in(c_j,c_{j+1})$. Moreover, for every $0\leq j<n$ and $0<\varepsilon\ll1$, one has open immersions $$\mtc{M}^K_{d,c_j-\varepsilon}\hookrightarrow \mtc{M}^K_{d,c_j}\hookleftarrow \mtc{M}^K_{d,c_j+\varepsilon},$$ which descend to projective birational morphisms $$\ove{M}^K_{d,c_j-\varepsilon}\rightarrow \ove{M}^K_{d,c_j}\leftarrow \ove{M}^K_{d,c_j+\varepsilon}.$$

\end{theorem}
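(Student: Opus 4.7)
The plan is to specialize the general K-moduli wall-crossing framework of \cite{ADL19} to the present setting. The starting point is boundedness: using the boundedness of K-semistable Fano varieties with fixed volume together with the observation that $D \sim_{\bbQ} -rK_X$ forces $D$ to lie in a fixed-dimensional linear system, the whole family of pairs $(X, cD)$ that are K-semistable for some $c \in [0, 1/r)$ is cut out by a $\PGL$-action on a single bounded Hilbert scheme $H$. This reduces the problem to studying finitely many affine conditions on $H \times [0, 1/r)$.

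The key computation is that $\beta$ is affine linear in $c$. Indeed, $D \sim_{\bbQ} -rK_X$ gives $-K_X - cD \sim_{\bbQ} (1-cr)(-K_X)$, so $S$ scales by $(1-cr)$ in the polarization while $A$ picks up $-c\,\ord_E(D)$, and for any prime divisor $E$ over $X$ one obtains
\[
\beta_{(X,cD)}(E) \;=\; \bigl[A_{(X,0)}(E) - S_{(X,0)}(E)\bigr] \;+\; c\,\bigl[r\,S_{(X,0)}(E) - \ord_E(D)\bigr].
\]
Combined with boundedness, only finitely many such affine functions of $c$ are relevant, so the K-semistable locus changes at finitely many rational values $0 = c_0 < c_1 < \cdots < c_n = 1/r$. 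Openness of K-semistability in families \cite{BLX19,Xu20} then shows $\mtc{M}^K_{d,c}$ is constant on each open interval $(c_j, c_{j+1})$, and interpolation (Theorem \ref{1}) implies that a pair K-semistable for some $c$ on either side of $c_j$ remains K-semistable at $c_j$, yielding the asserted open immersions of stacks.

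To descend to good moduli spaces and produce projective birational morphisms, I would invoke Alper's good moduli existence criteria together with the $\Theta$-reductivity and $S$-completeness of the K-moduli stack of $\bbQ$-Gorenstein smoothable log Fano pairs (\cite{ABHLX20,BHLLX21,XZ20,LXZ22}). Properness of $\ove{M}^K_{d,c}$ follows from the existence and uniqueness of K-polystable degenerations, while projectivity comes from positivity of the CM line bundle \cite{CP21,XZ20}. Birationality of the wall morphisms is automatic since a generic pair in either chamber remains K-polystable at the wall. The principal obstacle, already resolved in \cite{ADL19} but worth isolating, is verifying the stack-theoretic properties (\emph{$\Theta$-reductivity}, \emph{$S$-completeness}, and properness of good moduli) uniformly in $c$, since a priori the polystable replacements and the test configurations witnessing these properties depend on the parameter.
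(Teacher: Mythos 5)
The paper offers no proof of this statement at all; it is quoted directly from \cite[Theorem 1.2]{ADL19}, and your outline is an accurate reconstruction of that reference's strategy (affine dependence of $\beta_{(X,cD)}(E)$ on $c$, boundedness yielding finitely many rational walls, openness and continuity across chambers for the wall immersions, and the good-moduli/CM line bundle machinery for properness and projectivity), so you are taking essentially the same route as the source the paper relies on. The one point specific to this paper that your sketch does not address --- and which the paper itself also leaves implicit --- is that Theorem \ref{12} is asserted for the substacks $\mtc{M}^K_{d,c}$ defined as closures of the locus of smooth del Pezzo surfaces with their sum of lines, rather than for the full stacks $\mtc{M}^K_d(c)$ treated in \cite{ADL19}, so one should note that the open immersions restrict to these closures of a common open locus and still descend to projective birational morphisms of the corresponding good moduli spaces.
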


\section{Cubic and quartic del Pezzo pairs}

In this section, we focus on the cases when $d=3,4$. The K-moduli of cubic (resp. quartic) del Pezzo surfaces are isomorphic to the GIT moduli spaces of cubics (resp. quartics) (cf. \cite{OSS16,MM20}). In particular, the K-polystable limits of the smooth ones are still embedded in $\mb{P}^3$ (resp. $\mb{P}^4$) as a cubic surface (resp. complete intersection of quadric hypersurfaces), and they have at worst $A_1$ and $A_2$-singularities (resp. $A_1$-singularities). Thus it makes sense to discuss lines on cubic and quartic del Pezzo surfaces with these mild singularities. The lines on singular cubic surfaces were first studied in \cite{Cay69}: they are the degeneration of the 27 lines on smooth cubic surfaces with \emph{multiplicities}, which are nothing but the number of lines which reduce to a given one on a singular cubic surface.

We will prove that for all $c$ in the Fano region, the K-moduli spaces $\ove{M}^K_{d,c}$ of cubic (and quartic) del Pezzo pairs do not have wall crossings, and the moduli spaces are all isomorphic to the K-moduli space of cubic (resp. quartic) del Pezzo surfaces. The proof will proceed by first showing that for $0<c=\varepsilon\ll1$, we have the desired isomorphism induced by the natural forgetful map, and then showing that $\ove{M}^K_{d,c}\simeq \ove{M}^K_{d,\varepsilon}$.

\subsection{The cubic case}

In \cite{OSS16}, the authors essentially proved that for cubic del Pezzo surfaces, the GIT-(semi/poly)stability is equivalent to the K-(semi/poly)stability. As a consequence, the two moduli spaces are isomorphic. However, the K-moduli space was constructed only in recent years. Although this is well-known to experts, for the reader's convenience, here we state a more recent proof using local-global volumes comparison. 

\begin{theorem}\label{5} \textup{(cf. \cite[Section 4.2]{OSS16})}
A cubic surface is K-semistable if and only if it is GIT-semistable. In particular, the K-moduli space of degree $3$ del Pezzo surfaces is isomorphic to the GIT moduli space of cubic surfaces.
\end{theorem}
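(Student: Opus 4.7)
The plan is to combine the classical Hilbert--Mumford classification of cubic surfaces with the local-global volume comparison Theorem~\ref{6} to match the K-semistable locus with the GIT-semistable one. Specifically, I would first recall that a cubic surface $X\subset\mathbb{P}^3$ is GIT-semistable if and only if $X$ is normal with at worst $A_1$ or $A_2$ du Val singularities, and that the unique strictly polystable orbit is represented by the $3A_2$-cubic $V(x_1x_2x_3-x_0^3)$. It then suffices to show that a K-semistable $\mathbb{Q}$-Gorenstein smoothable degeneration of smooth cubic surfaces has exactly these types of singularities.

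For the implication $K$-ss $\Rightarrow$ GIT-ss, let $X$ be such a K-semistable limit. Since $(-K_X)^2=3$, Theorem~\ref{6} with $n=2$ gives, at every $x\in X$,
\[
3 \;=\; (-K_X)^2 \;\leq\; \left(1+\tfrac{1}{2}\right)^2\widehat{\vol}(x,X) \;=\; \tfrac{9}{4}\widehat{\vol}(x,X),
\]
so $\widehat{\vol}(x,X)\geq\tfrac{4}{3}$. I would then enumerate the possible singularities. A du Val singularity $\mathbb{C}^2/G$ has normalized volume $4/|G|$, yielding $\widehat{\vol}(A_n)=\tfrac{4}{n+1}$, $\widehat{\vol}(D_n)=\tfrac{1}{n-2}$, $\widehat{\vol}(E_6)=\tfrac{1}{6}$, $\widehat{\vol}(E_7)=\tfrac{1}{12}$, and $\widehat{\vol}(E_8)=\tfrac{1}{30}$; thus the bound forces the du Val type to be $A_1$ ($\widehat{\vol}=2$) or $A_2$ ($\widehat{\vol}=\tfrac{4}{3}$). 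The remaining smoothable non-Gorenstein klt surface singularities are the Koll\'ar--Shepherd-Barron $T$-singularities $\tfrac{1}{dm^2}(1,dma-1)$ of index $m\geq 2$, whose normalized volumes are bounded by $\tfrac{4}{dm^2}\leq 1<\tfrac{4}{3}$ and are therefore excluded. Consequently $X$ is Gorenstein with only $A_1$ or $A_2$ singularities, and since $(-K_X)^2=3$ its anticanonical map embeds $X$ as a cubic hypersurface in $\mathbb{P}^3$, so $X$ is GIT-semistable.

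For the converse and the moduli isomorphism, I would invoke that a smooth cubic surface is K-stable (classically due to Tian, and also deducible via a direct $\beta$-invariant estimate), which ensures that $\ove{M}^K_{3}(0)$ is non-empty and that the smooth-cubic locus is open dense. Combining the containment of (semi)stable loci from the previous paragraph with the properness and separatedness of both moduli spaces gives a bijective morphism $\ove{M}^K_{3}(0)\to \ove{M}^{\mathrm{GIT}}_3$; since both spaces are normal and proper of the same dimension, this morphism is an isomorphism, and the K-polystable/GIT-polystable correspondence follows by identifying the unique strictly semistable orbit ($3A_2$-cubic) on both sides.

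The main obstacle is ensuring that \emph{no} non-Gorenstein klt singularity slips through the local volume bound: the $T$-singularities of $\mathbb{Q}$-Gorenstein smoothings form a richer family than the du Val ones, and one needs explicit control on their normalized volumes. Once one applies the known formula that $\widehat{\vol}$ of a cyclic quotient $T$-singularity drops below $4/3$ as soon as the index exceeds $1$, Gorensteinness of the K-semistable limit is forced and the analysis collapses to the ADE enumeration.
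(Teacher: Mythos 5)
Your proposal is correct and follows essentially the same route as the paper: the local-to-global volume comparison forces $\widehat{\vol}(x,X)\geq \frac{4}{3}$ at every point, the Koll\'ar--Shepherd-Barron classification of $\mathbb{Q}$-Gorenstein smoothable singularities then leaves only $A_1$ and $A_2$ points (the paper rules out $\frac{1}{3}(1,1)$ by citing KSB directly, where you instead bound the normalized volume of the higher-index $T$-singularities -- a cosmetic difference), and the anticanonical embedding as a cubic plus normality, properness, and Zariski's Main Theorem give the isomorphism of moduli spaces. The only place you are looser than the paper is the existence of the comparison morphism $\ove{M}^K_3\rightarrow\ove{M}^{\GIT}_3$, which does not follow formally from a set-theoretic containment of semistable loci together with properness; the paper produces it from the fact that K-stability implies GIT-stability for hypersurfaces (Paul--Tian, Odaka--Spotti--Sun), which yields an open immersion of moduli stacks descending to the good moduli spaces.
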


\begin{proof}
We know that the Fermat cubic surface is K-stable (cf. \cite{Tia87}). By the openness of the K-stability (cf. Theorem \ref{4}), a general cubic surface is K-stable. Denote by $\ove{M}^K_3$ by the K-moduli space of cubic del Pezzo surfaces, which parameterizes K-stable smooth cubic surfaces and their K-polystable limits. Let $X\in \ove{M}^K_3$ be the limit of a 1-parameter family of $\{X_t\}_{t\in T\setminus\{0\}}$ K-semistable smooth cubics.

As the K-semistable surfaces have klt singularities (cf. \cite{Oda13}), in particular have quotient singularities, we let $(x\in X)\simeq (0\in\mb{A}^2/G_x)$ be a singular point (if it exists). Then by Theorem \ref{6}, we have that $$3=(-K_X)^2\leq \frac{9}{4}\widehat{\vol}(x,X)=\frac{9}{4}\cdot\frac{4}{|G_x|}=\frac{9}{|G_x|},$$ which implies that $|G_x|\leq 3$. If $|G_x|=2$, then $x$ is an $A_1$-singularity. If $|G_x|=3$, then $x$ is either an $A_2$-singularity or a $\frac{1}{3}(1,1)$-singularity. The latter case is ruled out by \cite[Proposition 3.10]{KSB88}. We thus conclude that $X$ has at worst $A_1$- or $A_2$-singularities. It follows from \cite[Section 2]{Fuj90} that $X$ can be embedded by $|-K_X|$ into $\mb{P}^3$ as a cubic surface.

Since for hypersurfaces the K-stability implies GIT-stability (cf. \cite[Theorem 3.4]{OSS16} or  \cite[Theorem 2]{PT09}), then we get an open immersion $\mtc{M}^{K}_3\hookrightarrow \mtc{M}^{\GIT}_3$ of moduli stacks, which descends to a birational and injective morphism $\Phi:\ove{M}^K_3\rightarrow \ove{M}^{\GIT}_3$ between good moduli spaces. Notice that the $\ove{M}^{\GIT}_3$ is normal by the properties of GIT quotients. As both of these two moduli spaces are proper, then $\Phi$ is a finite map, and thus $\Phi$ is an isomorphism by Zariski Main Theorem.

\end{proof}

\begin{remark}
    \textup{We will frequently call $\ove{M}^K_3$ the K-moduli spaces of cubic surfaces, by which we mean the K-moduli compactification of smooth K-polystable cubic surfaces.}
\end{remark}

In the proof of the theorem, we deduce that a K-polystable cubic del Pezzo surface has at worst $A_1$- or $A_2$-singularities. This partially recovers the following classical result of Hilbert.

\begin{theorem} \textup{(cf. \cite{Hil70})}
A cubic surface $X\subseteq\mb{P}^3$ is 
\begin{enumerate}[(i)]
\item GIT-stable if and only if it has at worst $A_1$-singularities;
\item GIT-strictly polystable if and only if it is isomorphic to the cubic $X_0$ defined by $x_0^3=x_1x_2x_3$;
\item GIT-semistable if and only if it has at worst $A_1$- or $A_2$-singularities.
\end{enumerate}
\end{theorem}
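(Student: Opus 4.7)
The plan is to apply the Hilbert--Mumford numerical criterion to the $\SL_4$-action on $V=\Sym^3(\bbC^4)^\vee$. A 1-PS diagonalizes as $\lambda(t)=\diag(t^{a_0},t^{a_1},t^{a_2},t^{a_3})$ with $\sum a_j=0$, assigning to a monomial $x_0^{i_0}x_1^{i_1}x_2^{i_2}x_3^{i_3}$ the weight $\sum_j i_j a_j$. A cubic $F\in V$ is unstable iff some $\lambda$ makes every monomial of $F$ have strictly positive weight, and strictly semistable iff some $\lambda$ makes the minimum monomial weight exactly zero.

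For necessity in (i) and (iii), pick a singular point $p\in X=\{F=0\}$, move $p=[1:0:0:0]$ by an $\SL_4$-change of coordinates, and write $F=x_0 Q(x_1,x_2,x_3)+C(x_1,x_2,x_3)$. The tangent cone $Q$ controls the singularity type: $\rk Q=3$ gives $A_1$; $\rk Q=2$ (normalized to $Q=x_1x_2$) gives $A_n,n\geq 2$, with $A_2$ precisely when the coefficient of $x_3^3$ in $C$ is nonzero; $\rk Q\leq 1$ produces $D_n$ or $E_n$. In the genuine $A_2$ case, the 1-PS with weights $(-2,1,1,0)$ assigns every monomial that can occur in $F$ (namely $x_0x_1x_2$, $x_3^3$, $x_3^2 x_i$, $x_3 x_i x_j$, $x_i x_j x_k$ with $i,j,k\in\{1,2\}$) weight $\geq 0$, with equality exactly at $x_0x_1x_2$ and $x_3^3$; this exhibits strict semistability. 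When the $x_3^3$-coefficient vanishes ($A_{\geq 3}$), the perturbed weights $(-5,3,3,-1)$ make every surviving monomial strictly positive, destabilizing $X$. The cases $\rk Q\leq 1$ (for $D,E$ singularities) and non-normal cubics are handled by analogous flags adapted to the lower-rank tangent cone or the one-dimensional singular locus.

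For sufficiency, invoke Theorem~\ref{5}: the isomorphism $\ove{M}^K_3\simeq\ove{M}^{\GIT}_3$ (induced by the identity on $V$) identifies the K-semistable and GIT-semistable loci in $V$, so the local-global volume bound of Theorem~\ref{6} used in the proof of Theorem~\ref{5} matches the necessity just established, while openness of K-semistability together with density of smooth (K-stable) cubics in the $A_1,A_2$-singular locus and properness of the K-moduli recovers the converse inclusion. The stable vs.\ strictly semistable distinction comes from the strict vs.\ non-strict local volume inequality at $A_1$ vs.\ $A_2$ points. For~(ii), verify directly that $X_0=\{x_0^3=x_1x_2x_3\}$ has three $A_2$-singularities at the coordinate vertices $[0:1:0:0]$, $[0:0:1:0]$, $[0:0:0:1]$ and a 2-dimensional torus stabilizer $(x_0,x_1,x_2,x_3)\mapsto(x_0,t_1x_1,t_2x_2,(t_1t_2)^{-1}x_3)$ in $\PGL_4$, so its $\SL_4$-orbit is closed in the semistable locus, making $X_0$ strictly polystable. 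Any strictly semistable cubic admits a weight-zero 1-PS whose limit has positive-dimensional stabilizer; by the classification of cubics carrying three $A_2$-singularities (a single $\SL_4$-orbit), this limit must be $X_0$, so $X_0$ is the unique strictly polystable representative.

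The main obstacle is the destabilization analysis for singularities worse than $A_2$, especially the $\rk Q\leq 1$ cases ($D_n, E_n$) and non-normal cubics, where one must choose a destabilizing flag adapted to the geometry of the singular locus and verify strict positivity of all remaining monomial weights case-by-case. The K-stability route streamlines the sufficiency step, but still requires a density/properness input to recover the full $A_1,A_2$-singular locus as the GIT-semistable locus, and the uniqueness of the strictly polystable orbit with representative $X_0$ ultimately rests on a direct stabilizer and orbit-closure computation.
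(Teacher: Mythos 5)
The paper does not actually prove this statement: it cites Hilbert and explicitly says its K-stability analysis only ``partially recovers'' the classical result, so your attempt has to stand on its own. The destabilization half of your argument (singularity worse than $A_1$ $\Rightarrow$ not stable, worse than $A_2$ $\Rightarrow$ not semistable) follows the standard Hilbert--Mumford route, and your explicit one-parameter subgroups $(-2,1,1,0)$ and $(-5,3,3,-1)$ for the normal form $F=x_0x_1x_2+C(x_1,x_2,x_3)$ do check out; the deferred $D_n$, $E_n$ and non-normal cases are routine in the same style.

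The genuine gap is in the sufficiency half. The isomorphism $\ove{M}^K_3\simeq\ove{M}^{\GIT}_3$ of Theorem \ref{5} is an identification of good moduli spaces; it does not identify the semistable loci inside $\mb{P}H^0(\mb{P}^3,\mtc{O}(3))$, so it cannot by itself certify that a \emph{given} cubic with only $A_1$ or $A_2$ singularities is GIT-semistable. Your proposed repair via ``openness of K-semistability plus density of smooth cubics plus properness'' fails because openness runs in the wrong direction: it says nothing about limits of K-semistable cubics being K-semistable, and the K-polystable limit supplied by properness of the K-moduli need not be the naive flat limit you started with. The proof of Theorem \ref{5} only establishes that K-semistable cubics have at worst $A_1$/$A_2$ singularities and that K-semistability implies GIT-semistability; the converse inclusion you need here is precisely the content of Hilbert's theorem, so the argument is circular as written. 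To close it you must either run the classical combinatorial analysis (enumerate the maximal destabilizing weight systems and show the monomials they kill force a singularity worse than $A_2$) or give an independent proof that every $A_1$/$A_2$ cubic is K-semistable. Two smaller problems: the claim that the strict versus non-strict local volume inequality ($|G_x|=2$ versus $|G_x|=3$) distinguishes GIT-stable from strictly semistable is a heuristic, not a theorem; and the uniqueness in (ii) requires classifying \emph{semistable cubics with positive-dimensional stabilizer} (showing the only such orbit is that of $X_0$), not merely cubics with three $A_2$ points --- a strictly semistable cubic with a single $A_2$ point has no weight-zero limit with three $A_2$ points until you have actually identified its closed orbit.
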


For the semistable cubic surfaces, we know how the lines degenerate. In other words, we know the multiplicities of the lines (cf. \cite[Table 1]{Tu05}).

\begin{prop}\label{10} 
Let $X$ be a semistable cubic surface and $L_i$'s be the 27 lines on it. Then the pair $(X,c\sum L_i)$ is log canonical for $0<c<\frac{1}{9}$. 
\end{prop}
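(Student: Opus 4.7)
My plan is to reduce the statement to showing that $(X, \tfrac{1}{9}D)$ is log canonical, from which the proposition follows because log canonicity is preserved as the coefficient decreases. First I would check that $D \sim -9K_X$: this is standard on any smooth cubic (twenty-seven lines each of anticanonical degree $1$, in a torsion-free Picard group), and the relation persists on every semistable $X$ by flat specialization, since $-K_X$ is Cartier on the Gorenstein hypersurface $X$. In particular $K_X + \tfrac{1}{9}D \sim 0$, so the pair is numerically log Calabi--Yau.

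Next, let $\pi \colon Y \to X$ be the minimal resolution. By the classification recalled above (Hilbert), $X$ has at worst $A_1$ or $A_2$ singularities, which are canonical, so $\pi$ is crepant and $K_Y = \pi^* K_X$. Writing $\pi^* D = \tilde D + \sum m_i E_i$ with $E_i$ the exceptional $(-2)$-curves, log canonicity of $(X, \tfrac{1}{9}D)$ is equivalent to log canonicity of $(Y, \tfrac{1}{9}\tilde D + \sum \tfrac{m_i}{9} E_i)$. The multiplicities $m_i$ are recovered from the linear system $\pi^* D \cdot E_j = 0$: at an $A_1$ this yields $m = \tfrac{1}{2}(\tilde D \cdot E)$, and at an $A_2$ it yields $m_1 = \tfrac{1}{3}(2a+b)$, $m_2 = \tfrac{1}{3}(a+2b)$ where $a = \tilde D \cdot E_1$, $b = \tilde D \cdot E_2$. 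The lc check on $Y$ then reduces to three conditions: (i) every line $L$ has multiplicity $k_L \leq 9$ in $D$; (ii) every $m_i \leq 9$; and (iii) at any smooth point of $X$ through which several lines pass, $\sum k_{L_i} \leq 18$, so that the blow-up log discrepancy is non-negative.

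All three bounds can be read off from the explicit tabulation of lines with multiplicities on semistable cubics in \cite[Table 1]{Tu05}. The extremal case is the strictly polystable cubic $X_0 \colon x_0^3 = x_1 x_2 x_3$, which has three $A_2$ singularities and three lines $L_i = \{x_0 = x_i = 0\}$ each of multiplicity exactly $9$ in $D$ (since $\{x_i = 0\} \cap X_0 = 3L_i$ and $D \cdot H = 27$ forces $D = 9(L_1 + L_2 + L_3)$); at each $A_2$ the two lines through the singular point yield $m_1 = m_2 = 9$, producing a pair that is strictly lc but not klt---this is precisely why the bound in the statement is $c < 1/9$ rather than $c \leq 1/9$. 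The main obstacle is the case-by-case verification of (i)--(iii) across the finitely many isomorphism types of semistable cubics, but once the line multiplicities and singular-point incidences are tabulated, each check reduces to elementary arithmetic on $Y$ and then the final log-smoothness at intersections of $\tilde D$ with the exceptional curves (which is immediate since the lines through a singular point have distinct directions in the resolved geometry).
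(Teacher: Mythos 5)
Your strategy is sound but genuinely different from the paper's. You propose a uniform, direct verification: reduce to $c=\tfrac19$ by monotonicity, pass to the crepant minimal resolution $Y$, compute the exceptional multiplicities $m_i$ by intersection theory, and check the coefficient and local bounds case by case using the tabulated line multiplicities of \cite{Tu05}. The paper instead splits into two cases and avoids any systematic case analysis: for surfaces with only $A_1$-singularities it simply cites \cite[Proposition 5.8]{GKS21} for $\mathrm{lct}>\tfrac19$, and for surfaces with $A_2$-singularities it resolves explicitly only the most degenerate surface $X_0=\{x_0^3=x_1x_2x_3\}$ (finding log discrepancy $1-9c$ on all relevant divisors) and then concludes for every other $A_2$-surface by observing that such pairs specially degenerate to $(X_0,\,9c\sum G_i)$, i.e.\ by semicontinuity of the log canonical threshold. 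Your route buys independence from \cite{GKS21} and from the degeneration argument; the paper's route buys brevity, since only one surface ever has to be resolved.

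Two caveats. First, your reduction of log canonicity on $Y$ to conditions (i)--(iii) is incomplete as stated: you must also verify the pair at points of the exceptional locus, namely where proper transforms of lines meet the $(-2)$-curves, where the two curves of an $A_2$-chain meet, and at possible coincident intersection points or tangencies there. You dismiss this as ``immediate,'' but it is precisely where the content lies; it does hold (for instance the six multiplicity-two lines through a node hit the exceptional curve at six distinct points, and in the $3A_2$ case the two lines through each singular point meet opposite ends of the chain, as the paper's formula $\pi^{*}G_1=F_1+\tfrac13E_1+\tfrac23H_1+\tfrac13H_2+\tfrac23E_2$ reflects), yet it requires checking in each singularity type, not just bounding multiplicities. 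Second, the case-by-case arithmetic that you defer is the bulk of the proof in your approach, and as written you have only carried it out for $X_0$; also, your parenthetical that $D\cdot H=27$ ``forces'' $D=9(L_1+L_2+L_3)$ only pins down the sum of the three multiplicities --- you need the symmetry of $X_0$ or the table in \cite{Tu05} to split it evenly.
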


\begin{proof}
    It is proven in \cite[Proposition 5.8]{GKS21} that if $X$ the log canonical threshold is larger than $\frac{1}{9}$ if $X$ has at worst $A_1$-singularities. 
    
    Now let us deal with the surfaces with $A_2$-singularities. We first give an explicit construction of the surface $X_0=\{x_0^3=x_1x_2x_3\}$. Let $(x:y:z)$ be the homogeneous coordinates of $\mb{P}^2$, then blow up the tangent vector given by $\{y=0\}$ at $(1:0:0)$, the tangent vector given by $\{z=0\}$ at $(0:1:0)$, and the tangent vector given by $\{x=0\}$ at $(0:0:1)$. Denote by $E_i$ and $F_i$ the exceptional divisors on the blow-up with self-intersection $-2$ and $-1$ respectively, and $H_i$ the $(-2)$-curves in the class $$H-E_1-2F_1-E_2-F_2,\quad H-E_2-2F_2-E_3-F_3,\quad H-E_3-2F_3-E_1-F_1,$$ respectively, where $i=1,2,3$. Denote this surface by $\wt{X}_0$. Finally contract the three pairs of $(-2)$-curves to get $X_0$. Notice that there are four sections of $\omega^{*}_{\wt{X}_0}$ coming from $xyz,y^2z,z^2x,x^2y\in H^0(\mb{P}^2,\omega^{*}_{\mb{P}^2})$. They give rise to a morphism from $\wt{X}_0$ to $\mb{P}^3$ contracting $(-2)$-curves, with image $X_0\subseteq \mb{P}^3$ given by $x_0^3=x_1x_2x_3$.

Let $\pi:\wt{X}_0\rightarrow X_0$ be the contraction map. This surface has three $A_2$-singularities, and the 27 lines degenerate to the images of $F_1,F_2,F_3$, denoted by $G_1$, $G_2$, $G_3$, each of which is of multiplicity $9$ (cf. \cite[Proposition 4.1(ii)]{Tu05}). Then by computing intersection numbers, we deduce that $$\pi^{*}G_1=F_1+\frac{1}{3}E_1+\frac{2}{3}H_1+\frac{1}{3}H_2+\frac{2}{3}E_2,$$ and the other two relations for $\pi^{*}G_2$ and $\pi^{*}G_3$. As a result, we obtain that $$A_{(X_0,c\sum L_j)}(F_i)=A_{(X_0,c\sum L_j)}(E_i)=A_{(X_0,c\sum L_j)}(H_i)=1-9c>0$$ for $0<c<\frac{1}{9}$. Thus the pair $(X_0,c\sum L_j)$ is log canonical for $0<c<\frac{1}{9}$. The other pairs whose surfaces have $A_2$-singularities specially degenerate to this case, so we get the result we desire.
\end{proof}

\begin{proposition}\label{14}
    Let $0<\varepsilon\ll1$ be a rational number. Then the forgetful map $\varphi:\mtc{M}^K_3(\varepsilon)\rightarrow \mtc{M}^K_3$ is proper. 
\end{proposition}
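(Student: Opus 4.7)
The plan is to verify the valuative criterion of properness for $\varphi$. Let $R$ be a DVR with fraction field $K$, set $T=\Spec R$ with generic point $\eta$ and closed point $0$, and suppose we are given compatible test data: a map $\alpha\colon \eta\to \mtc{M}^K_3(\varepsilon)$ classifying a K-semistable log Fano pair $(X_\eta,\varepsilon D_\eta)$ with $D_\eta\sim_{\mb{Q}}-rK_{X_\eta}$ coming from the closure of the smooth-cubic-plus-sum-of-lines locus, together with a lift $\beta\colon T\to \mtc{M}^K_3$ classifying a $\mb{Q}$-Gorenstein flat family $\mtc{X}\to T$ of K-polystable cubic del Pezzo surfaces with $\mtc{X}_\eta=X_\eta$. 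After a finite base change we want a lift $T\to \mtc{M}^K_3(\varepsilon)$ of both maps.

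The first move is to extend the divisor by taking $\mtc{D}\subset \mtc{X}$ to be the scheme-theoretic closure of $D_\eta$. Since $D_\eta$ contains no component of the central fiber, $\mtc{D}$ is a relative Mumford $\mb{Q}$-divisor, flat over $T$. On $\eta$ one has $\mtc{D}|_\eta + rK_{\mtc{X}/T}|_\eta\sim_{\mb{Q}}0$; because $\mtc{X}_0$ is irreducible and normal, this promotes to $\mtc{D}\sim_{\mb{Q},T}-rK_{\mtc{X}/T}$. Hence $\mtc{D}$ is $\mb{Q}$-Cartier, $-(K_{\mtc{X}/T}+\varepsilon\mtc{D})$ is $T$-ample, and $(\mtc{X},\varepsilon\mtc{D})\to T$ is a $\mb{Q}$-Gorenstein flat family of log Fano pairs.

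Next, I would identify $D_0\defeq \mtc{D}|_{X_0}$ with the weighted sum of lines on $X_0$. By Theorem \ref{5} and its proof, $X_0$ is a K-polystable GIT-semistable cubic in $\mb{P}^3$ with at worst $A_1$- or $A_2$-singularities. Since the test family lives in the closure of the smooth locus, after a suitable base change $D_\eta$ may be taken to be the sum of the $27$ lines on $X_\eta$; the relative Hilbert scheme of lines on $\mtc{X}/T$ is proper, and its central-fiber cycle is the sum of lines on $X_0$ counted with the multiplicities tabulated in \cite[Table 1]{Tu05}. Thus $D_0$ coincides with this weighted sum.

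With this identification, Proposition \ref{10} yields that $(X_0,cD_0)$ is log canonical for every $c\in(0,\tfrac{1}{9})$, and the interpolation theorem (Theorem \ref{1}) applied to the K-semistable $\mb{Q}$-Fano $X_0$ together with the divisor $D_0\sim_{\mb{Q}}-rK_{X_0}$ then gives that $(X_0,\varepsilon D_0)$ is K-semistable for every $\varepsilon\in(0,\tfrac{1}{9})$. This furnishes the desired extension of the pair family to $T$, settling existence; separatedness is automatic since $\mtc{D}$ is uniquely determined as the closure of $D_\eta$ and $\mtc{X}$ is unique by separatedness of $\mtc{M}^K_3$. The main obstacle I foresee is precisely the identification step above --- pinning down $D_0$ as the Cayley--Tu weighted sum of lines on $X_0$ so that Proposition \ref{10} can be invoked --- which rests on the properness of the relative Hilbert scheme of lines in a family of semistable cubics and the classical description of how the $27$ lines specialize at $A_1$- and $A_2$-singular cubic surfaces.
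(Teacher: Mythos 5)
Your proof is correct and follows essentially the same route as the paper: uniqueness via the scheme-theoretic closure of the divisor, identification of the central-fiber divisor with the Cayley--Tu weighted sum of lines on the (at worst $A_1$/$A_2$-singular) limit cubic, and then Proposition \ref{10} plus interpolation to get K-semistability of the filling. Your explicit valuative-criterion framing and the appeal to properness of the relative Hilbert scheme of lines merely formalize steps the paper treats more briefly.
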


\begin{proof}
    Let $\pi^{\circ}:(\mts{X}^{\circ},c\mts{D}^{\circ})\rightarrow T\setminus\{0\}$ be a family of K-semistable pairs, where $(\mts{X}^{\circ},c\mts{D}^{\circ})_t$ is isomorphic to a smooth del Pezzo surface with the sum of lines, and $\mts{X}\rightarrow T$ be an extension of $\mts{X}^{\circ}\rightarrow T\setminus\{0\}$. It suffices to show that there exists a unique extension $\pi:(\mts{X},c\mts{D})\rightarrow T$ of $\pi^{\circ}$. The uniqueness is apparent to us: as $\mts{X}_0$ is normal, the filling $\mts{D}_0$ must be obtained in the following way if it exists. Let $\mts{X}^{\sm}$ be the open locus of $\mts{X}$ such that  $\pi:\mts{X}\rightarrow T$ is smooth on $\mts{X}^{\sm}$. Taking the scheme-theoretic closure of $\mts{D}^{\circ}$, and restricting it to $\mts{X}^{\sm}_0$, the $\mts{D}_0$ is the extension (by taking closure) of this restricted divisor on $\mts{X}^{\sm}_0$ to $\mts{X}_0$ as a Weil divisor.
    
    Now we only need to display such a filling. Recall that the central fiber $\mts{X}_0$ has at worst $A_2$-singularities. It was displayed in \cite{Cay69} that the lines on general fibers degenerate to lines on $\mts{X}_0$ with multiplicities, denoted by $\sum L_{i,0}$. It follows from Proposition \ref{10} that the pair $(\mts{X}_0,c\sum L_{i,0})$ is log canonical when $c=\frac{1}{9}$. It follows from interpolation that the pair $(\mts{X}_0,c\sum L_{i,0})$ is K-semistable for $0<c\ll1$, and this gives a desired filling.
\end{proof}

\begin{prop}\label{15}
    Let $0<\varepsilon\ll1$ be a rational number. Then there is an isomorphism $\ove{M}^K_{3,\varepsilon}\simeq \ove{M}^K_{3}$.
\end{prop}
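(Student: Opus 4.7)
The plan is to show that the natural forgetful morphism $\varphi\colon \overline{M}^K_{3,\varepsilon}\to \overline{M}^K_3$ sending $(X,\varepsilon D)\mapsto X$ is an isomorphism. Proposition~\ref{14} already furnishes properness of $\varphi$ on the stack level, and hence of the induced map on good moduli spaces. The target $\overline{M}^K_3$ is normal by Theorem~\ref{5}, as it coincides with the GIT moduli of cubic surfaces. Consequently it suffices to verify that $\varphi$ is birational and bijective on closed points: properness plus quasi-finiteness yields finiteness, and a finite birational morphism to a normal target is an isomorphism by Zariski's Main Theorem.

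For surjectivity on closed points, I would start from a K-polystable cubic $X\in\overline{M}^K_3$. By Theorem~\ref{5}, $X$ has at worst $A_1$- or $A_2$-singularities, so the divisor $\sum L_i$ (with Cayley's multiplicities) is defined on $X$. Proposition~\ref{10} says $(X,\tfrac{1}{9}\sum L_i)$ is log canonical, and interpolation (Theorem~\ref{1}(2)) gives that $(X,\varepsilon\sum L_i)$ is K-semistable for small rational $\varepsilon$. To upgrade to K-polystability, observe that any degeneration of $(X,\varepsilon\sum L_i)$ to a K-polystable pair $(X',\varepsilon D')$ induces a degeneration of the underlying surface $X\rightsquigarrow X'$; since $X$ is K-polystable, $X'\simeq X$ and the degeneration is realized by a 1-parameter subgroup of $\Aut(X)$. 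As the canonical divisor $\sum L_i$ is $\Aut(X)$-invariant, such a 1-PS fixes it, so $D'=\sum L_i$ and $(X,\varepsilon\sum L_i)$ is itself K-polystable with $\varphi$-image $X$. Birationality then follows by restricting to the open dense substack of smooth K-stable cubics, where the $27$ lines are transverse and uniquely determined by $X$.

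For injectivity on closed points, suppose $(X,\varepsilon D_1)$ and $(X,\varepsilon D_2)$ are K-polystable pairs in $\overline{M}^K_{3,\varepsilon}$ with the same underlying surface $X$. By the very definition of $\overline{M}^K_{3,\varepsilon}$, each $D_i$ is obtained, via the uniqueness procedure in the proof of Proposition~\ref{14}, as the Weil-divisorial limit of $\sum_j L_j$ along some $\mathbb{Q}$-Gorenstein smoothing of $X$: take the scheme-theoretic closure of the generic sum of lines, restrict to the smooth locus $X^{\sm}$, and then take closure in $X$. On $X^{\sm}$ the limit is unambiguous, and at the singular points the multiplicities with which nearby lines specialize are combinatorial invariants of the local analytic type of each $A_1$- or $A_2$-singularity, as tabulated classically in~\cite{Cay69} and~\cite[Table~1]{Tu05}. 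Hence both $D_1,D_2$ coincide with the canonical sum of lines with multiplicities attached to $X$, and $D_1=D_2$.

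The main obstacle I anticipate is precisely this independence of the Weil-divisorial filling from the chosen smoothing: a priori two distinct $\mathbb{Q}$-Gorenstein smoothings of the same $X$ could yield different closures along the singular fibers, and it is Cayley's combinatorial multiplicity data that forces agreement and makes $D=\sum L_i$ intrinsic to $X$. Once bijectivity on closed points is established, combining it with properness (Proposition~\ref{14}), birationality, and normality of $\overline{M}^K_3$ via Zariski's Main Theorem delivers the desired isomorphism $\overline{M}^K_{3,\varepsilon}\simeq \overline{M}^K_3$.
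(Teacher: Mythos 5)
Your argument follows the paper's proof essentially verbatim: properness of the forgetful map (Proposition \ref{14}) plus quasi-finiteness (the boundary divisor is the intrinsic sum of the finitely many lines with multiplicities) gives finiteness, birationality is clear on the smooth locus, and Zariski's Main Theorem with normality of $\ove{M}^K_3$ concludes. The extra discussion of surjectivity and of upgrading $(X,\varepsilon\sum L_i)$ to K-polystability is not needed for this chain (finite $+$ birational $+$ normal target already suffices), but it does not affect correctness.
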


\begin{proof}
    We claim that the forgetful map $\varphi:\mtc{M}^K_{3,\varepsilon}\rightarrow\mtc{M}^K_3$ is finite. Since $\varphi$ is representable, by Proposition \ref{14}, it suffices to show that it is quasi-finite. In the proof of Proposition \ref{14}, we in fact show that for each K-semistable pair $(X,cD)$ in the stack $\mtc{M}^K_{3,\varepsilon}$, the divisor $D$ is the sum of the lines on $X$ counted with multiplicities. For every cubic surface $X$ with at worst $A_2$-singularities, there are only finitely many lines on $X$, hence $\varphi$ is quasi-finite. The finite forgetful map $\varphi$ descends to a finite morphism between good moduli spaces $\psi:\ove{M}^K_{3,\varepsilon}\rightarrow \ove{M}^K_3$. As the K-moduli $\ove{M}^K_3$ is normal, and the morphism $\psi$ is birational and finite, it follows from the Zariski's Main Theorem that $\psi$ is an isomorphism.
\end{proof}

\begin{remark}
    \textup{In fact, the forgetful map $\varphi$ between stacks is an isomorphism: for the universal family $\mts{X}$ over $\mtc{M}^K_3$, as we can still define lines with multiplicities on mildly singular cubic surfaces, there is a divisor $\mts{D}$ on $\mts{X}$ such that each fiber $(\mts{X},\mts{D})_t$ is a cubic with sum of lines. This gives the inverse morphism of $\phi$.}
\end{remark}

\begin{corollary}\label{3}
Let $X$ be a K-semistable cubic surface and $L_i$'s are the 27 lines on it. Then the pair $(X,c\sum L_i)$ is K-semistable for $0<c<\frac{1}{9}$. In other words, there are no walls for $\ove{M}^K_{3,c}$ when $c$ varies in $(0,\frac{1}{9})$. We have a natural isomorphism $$\ove{M}^K_{3,c}\stackrel{\sim}{\longrightarrow} \ove{M}^K_{3}$$ induced by the forgetful map.
\end{corollary}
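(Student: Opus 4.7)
The plan is to obtain the K-semistability of $(X, c\sum L_i)$ for every $c \in (0, \tfrac{1}{9})$ by a direct application of interpolation, and then to promote the small-$\varepsilon$ isomorphism of Proposition \ref{15} to the full range by re-running the same arguments. First, I would apply Theorem \ref{1}(2) with $r = 9$: since $\sum L_i \sim_{\mb{Q}} -9K_X$ (a Picard-group calculation on a smooth cubic that extends to its K-semistable limits because the 27 lines specialize with multiplicities, so the relation is preserved in the flat family), and Proposition \ref{10} shows that $(X, \tfrac{1}{9}\sum L_i)$ is log canonical (the log discrepancies $1 - 9c$ computed there vanish rather than go negative at the endpoint $c = 1/9$), interpolation upgrades this log canonicity to K-semistability for every $c \in (0, \tfrac{1}{9})$.

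For the isomorphism $\psi_c : \ove{M}^K_{3,c} \to \ove{M}^K_3$, I would run the proofs of Propositions \ref{14} and \ref{15} with the only modification being that the interpolation step now yields K-semistability for an arbitrary $c \in (0, \tfrac{1}{9})$ rather than just for $c \ll 1$. Concretely, for any one-parameter family of K-semistable pairs $(\mts{X}^\circ, c\mts{D}^\circ) \to T \setminus \{0\}$ with smooth cubic generic fibers and sum of lines as boundary, the scheme-theoretic closure of $\mts{D}^\circ$ produces a central filling divisor, the central pair is log canonical at $c = \tfrac{1}{9}$ by Proposition \ref{10}, and hence K-semistable at the given $c$ by Step 1. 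This establishes properness of the forgetful map $\varphi_c : \mtc{M}^K_{3,c} \to \mtc{M}^K_3$ for every such $c$; quasi-finiteness is the same as in Proposition \ref{15}, since every cubic with at worst $A_1$- or $A_2$-singularities carries only finitely many lines counted with multiplicity. Descending to good moduli yields a finite birational morphism $\psi_c$, and normality of $\ove{M}^K_3$ (Theorem \ref{5}) together with Zariski's Main Theorem forces $\psi_c$ to be an isomorphism. The absence of walls for $\ove{M}^K_{3,c}$ is then a formal consequence: the wall-crossing morphisms of Theorem \ref{12} commute with each $\psi_c$, and since each $\psi_c$ is an isomorphism onto the fixed target $\ove{M}^K_3$, the wall-crossing morphisms must themselves be isomorphisms, collapsing the chamber structure.

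The main obstacle has in fact already been defused by Proposition \ref{10}: the explicit blow-up analysis of the surface $X_0 = \{x_0^3 = x_1 x_2 x_3\}$ and its three $A_2$-singularities pins the log canonical threshold of the line configuration precisely at $\tfrac{1}{9}$, which coincides with the Fano boundary. Once that log canonicity is known at the critical coefficient, interpolation instantly propagates K-semistability throughout the open interval, and the moduli-theoretic statements reduce to the small-$c$ arguments already recorded in Propositions \ref{14} and \ref{15}.
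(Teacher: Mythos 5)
Your proposal is correct and follows essentially the same route as the paper: K-semistability across the whole interval comes from Proposition \ref{10} combined with the interpolation Theorem \ref{1} (with $r=9$, noting log canonicity extends to the endpoint $c=\tfrac{1}{9}$), and the isomorphism with $\ove{M}^K_3$ comes from the forgetful-map, finiteness, and Zariski Main Theorem argument of Propositions \ref{14} and \ref{15}. The only difference is organizational: the paper gets the isomorphism for all $c$ from the small-$\varepsilon$ case together with the absence of walls, whereas you rerun Propositions \ref{14}--\ref{15} at each fixed $c$ and then read off the absence of walls, which amounts to the same content with the same inputs.
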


\begin{proof}
    This follows immediately from Proposition \ref{10} and Proposition \ref{15} that there are no walls for the K-moduli spaces $\ove{M}^{K}_{3,c}$, and they are all isomorphic to the K-moduli of cubic del Pezzo surfaces, the isomorphism being induced by the forgetful map $\varphi$.
  
\end{proof}

\subsection{The quartic case}

The existence of K\"{a}hler-Einstein metric on quartic surfaces was studied in \cite{MM20,OSS16}. 

\begin{theorem} \textup{(cf. \cite[Theorem 4.1, 4.2]{OSS16})}
A K-semistable quartic del Pezzo surface has at worst $A_1$-singularities.
\end{theorem}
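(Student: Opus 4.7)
The plan is to adapt the local-to-global volume comparison argument that was used for the cubic case in the proof of Theorem~\ref{5}. Since a K-semistable $\bQ$-Fano variety automatically has klt singularities (Odaka), and since in characteristic zero every two-dimensional klt singularity is a quotient singularity, any singular point of a K-semistable quartic del Pezzo surface $X$ can be written as $(x\in X)\simeq(0\in\bA^2/G_x)$ for some small finite subgroup $G_x\subseteq\GL_2(\bC)$. For such a quotient singularity one has $\widehat{\vol}(x,X)=4/|G_x|$, which is the key local input.

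Next, I apply Theorem~\ref{6} with $n=2$ and $D=0$. Using that $(-K_X)^2=4$ for a quartic del Pezzo surface, the inequality becomes
\[
4\;=\;(-K_X)^2\;\le\;\left(1+\tfrac12\right)^{2}\widehat{\vol}(x,X)\;=\;\frac{9}{4}\cdot\frac{4}{|G_x|}\;=\;\frac{9}{|G_x|},
\]
which forces $|G_x|\le 9/4$. Because $|G_x|$ is a positive integer, this immediately yields $|G_x|\in\{1,2\}$, and $|G_x|=1$ just means that $x$ is a smooth point.

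The remaining case $|G_x|=2$ corresponds to the small subgroup $\langle-\Id\rangle\subseteq\GL_2(\bC)$ (any other order-two action contains a pseudo-reflection and so is absent from the small-group representation), whose quotient is precisely an $A_1$-singularity. Hence only smooth points and $A_1$-singularities can occur on $X$. In contrast to the cubic case, where the weaker estimate $|G_x|\le 3$ admitted both $A_2$ and $\tfrac{1}{3}(1,1)$ and required a separate appeal to \cite[Proposition 3.10]{KSB88} to exclude the latter, here the volume inequality is already tight enough to finish the argument in one stroke. For this reason I do not anticipate any real obstacle; the only routine point to check is the identification $\widehat{\vol}(x,X)=4/|G_x|$ for surface quotient singularities, which is standard.
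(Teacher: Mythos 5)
Your argument is correct: with $(-K_X)^2=4$ the inequality of Theorem \ref{6} gives $4\le 9/|G_x|$, hence $|G_x|\le 2$, and the only small order-two subgroup of $\GL_2(\mb{C})$ is $\langle -\Id\rangle$, so only $A_1$-points can occur and no extra input such as \cite[Proposition 3.10]{KSB88} is needed. The paper itself only cites \cite[Theorems 4.1, 4.2]{OSS16} for this statement, but your proof is precisely the local-to-global volume comparison the paper carries out for the cubic case in Theorem \ref{5}, so it is essentially the same approach.
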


By \cite[Theorem 3.4]{HW81}, we know that every ADE del Pezzo surface is the contraction of the $(-2)$-curves of a blow-up of $\mb{P}^2$ at points in \emph{almost general position} (cf. \cite[Definition 3.2]{HW81}), that is, in the position such that no $(-k)$-curves will be created under the blow-up for any $k\geq 3$. Also, it follows from \cite[Section 2]{Fuj90} that an ADE del Pezzo surface of degree $4$ can be always anti-canonically embedded in to $\mb{P}^4$ as an complete intersection of two quadric 3-folds. Therefore, similar as in the cubic case, we have a canonical choice of the degeneration of the 16 lines on those ADE del Pezzo quartic surfaces.

\begin{prop}\label{7}
Let $X$ be a K-semistable quartic del Pezzo surface and $L_i$'s the 16 lines on it. Then the pair $(X,c\sum L_i)$ is K-semistable for $0<c<\frac{1}{4}$. Moreover, there is a natural isomorphism $\ove{M}^K_{4,c}\simeq \ove{M}^K_4$ of the K-moduli spaces, induced by the forgetful map $\mtc{M}^K_{4,\varepsilon}\rightarrow \mtc{M}^K_4$. 
\end{prop}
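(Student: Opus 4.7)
The plan is to mirror the cubic case proof (Propositions \ref{10}, \ref{14}, \ref{15} and Corollary \ref{3}) almost verbatim, with the cubic-specific singularity analysis replaced by the corresponding quartic data. The upshot is that since a K-semistable quartic del Pezzo surface has at worst $A_1$-singularities and admits an anticanonical embedding into $\mathbb{P}^4$ as a complete intersection of two quadrics, the 16 lines on a smooth quartic degenerate canonically (with multiplicities) to lines on the limit, and the resulting pair is mild enough that interpolation applies.

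First, I would show that for a K-semistable quartic del Pezzo surface $X$ with $L_i$'s the 16 lines (counted with multiplicity), the pair $(X,\tfrac14\sum L_i)$ is log canonical. Since $\sum L_i\sim -4K_X$, this is exactly the log Calabi--Yau threshold $r=4$ needed to invoke Theorem \ref{1}(2), which then yields K-semistability of $(X,c\sum L_i)$ for all $0<c<\tfrac14$. To check log canonicity, first note $X$ has at worst $A_1$-singularities. Away from the singular locus, the analysis reduces to counting concurrences of smooth lines on a mildly singular quartic: at each point $p\in X^{\sm}$ at most $k$ lines meet, and one needs $k/4\le1$, which can be verified either by a direct local intersection computation in the almost-general-position model of \cite{HW81} or by invoking the analogue of \cite[Proposition 5.8]{GKS21} for the quartic case. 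At the $A_1$-singularities, I would pass to the minimal resolution $\pi:\widetilde X\to X$, extract the $(-2)$-curve $E$ with $A_X(E)=1$, and compute the pull-back of each line passing through the node; a direct intersection-number calculation (as in the cubic $A_2$ analysis) gives log discrepancies of the form $1-4c\cdot(\text{local coefficient})\ge 0$ at $c=\tfrac14$.

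Next, for the isomorphism statement I would prove the analogue of Proposition \ref{14}: the forgetful morphism $\varphi:\mtc{M}^K_4(\varepsilon)\to\mtc{M}^K_4$ is proper. Uniqueness of the filling of a one-parameter family follows the same normality argument: given a family $\pi^\circ:(\mts X^\circ,c\mts D^\circ)\to T\setminus\{0\}$ of smooth quartics with sum of lines and an extension $\mts X\to T$ of the total space, the divisor $\mts D_0$ on the central fiber must be the Weil-divisor closure of $\mts D^\circ$ restricted to the smooth locus. Existence of the filling uses the canonical degeneration of the 16 lines on the ADE quartic $\mts X_0$ combined with the log canonicity established above and the interpolation Theorem \ref{1}(2), which produces a K-semistable filling for $0<c\ll1$. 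Then, as in Proposition \ref{15}, $\varphi$ is representable and quasi-finite (only finitely many lines on each fiber), hence finite; it descends to a finite birational morphism $\psi:\ove{M}^K_{4,\varepsilon}\to\ove{M}^K_4$ between normal proper schemes, which is an isomorphism by Zariski's Main Theorem. Combined with the wall-crossing structure of Theorem \ref{12}, the absence of walls in $(0,\tfrac14)$ implied by Step 1 gives $\ove{M}^K_{4,c}\simeq\ove{M}^K_{4,\varepsilon}\simeq\ove{M}^K_4$ for all $c$ in the Fano region.

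The main obstacle I expect is the log canonicity verification in Step 1, specifically controlling how many lines can concur at a single point of a (possibly $A_1$-singular) quartic del Pezzo surface, and handling lines that pass through an $A_1$ node. The cubic case hid a lot of combinatorics in the cited table \cite[Table 1]{Tu05} and in \cite[Proposition 5.8]{GKS21}; for quartics the analogous multiplicity table on the GIT boundary and a careful case-by-case check of the intersection pattern of the 16 lines in the almost-general-position model will do the work, but writing this up cleanly is the technical heart of the proof. Everything else is formal.
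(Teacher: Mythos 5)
Your plan is essentially the paper's own proof: establish log canonicity of $(X,\tfrac14\sum L_i)$ via the multiplicities of the degenerate lines at the $A_1$-points, apply interpolation (Theorem \ref{1}(2)), and then transfer the cubic-case properness/quasi-finiteness/Zariski-Main-Theorem argument (Propositions \ref{14}, \ref{15}) to identify $\ove{M}^K_{4,c}$ with $\ove{M}^K_4$. The only difference is that the combinatorial heart you defer --- that exactly eight lines counted with multiplicity pass through each $A_1$-singularity (four double lines for one node, with multiplicity-four lines appearing when nodes are joined), so that $A_{(X,c\sum L_i)}(E)=1-4c$ --- is exactly what the paper works out explicitly by exhibiting one-parameter degenerations in the blow-up model, case by case in the number of nodes.
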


\begin{proof}
As in the proof of Corollary \ref{3}, we have a family $(\mtc{X},\mtc{D})$ over $\mtc{M}^K_4$ such that a general fiber is a smooth quartic with the 16 lines on it. By interpolation of K-stability (cf. Theorem \ref{1}), if we prove that these pairs are log canonical for $0<c\leq\frac{1}{4}$, then $(\mtc{X},c\mtc{D})$ has K-semistable fibers for any $0\leq c<1/4$. In particular, the same argument of the second statement of Corollary \ref{3} shows that $\ove{M}^K_{4,c}\simeq\ove{M}^K_4$ for any $c\in (0,\frac{1}{4})$.

Now we prove that the each fiber in the family $(\mtc{X},c\mtc{D})$ is log canonical for $0\leq c\leq 1/4$. Let $(X,cD=c\sum L_i)$ be an arbitrary fiber. 

\begin{enumerate}[(i)]
\item If $X$ is smooth, then the 16 lines are distinct and $\sum L_i$ is normal crossing. Thus the pair is log canonical automatically.
\item If $X$ has one $A_1$-singularity $p$, then we claim that there exist exactly four lines of multiplicity two and eight lines of multiplicity one, and that these double lines pass through $p$, while the remaining eight lines avoid it. 

First notice that a quintic del Pezzo surface $X$ with exactly one $A_1$-singularity is obtained by contracting the $(-2)$-curve on the blow-up of $\mb{P}^2$ along three distinct points $p_1,p_2,p_3$ and a tangent vector supported at $p_4$. In fact, let $p\in X$ be the singularity, and $\wt{X}\rightarrow X$ the blow-up of $X$ at $p$ with an exceptional divisor, which is a $(-2)$-curve. By \cite[Theorem 3.4]{HW81}, 
the surface $\wt{X}$ is obtained from blowing up $\mb{P}^2$, thus there must be a $(-1)$-curve $C$ intersecting $E$. Contracting first $C$, then the proper transform of $E$, one get a smooth del Pezzo surface of degree $6$, which is a blow-up of $\mb{P}^2$ at three distinct points.

Consider a one-parameter family of projective planes $\mb{P}^2\times\mb{A}^1\rightarrow\mb{A}^1$, and let $l_1,...,l_5$ be five sections of $\mb{A}^1$ such that over $0\neq t\in \mb{A}^1$, $l_1,...,l_5$ does not intersect, while over $0$, only $l_4$ and $l_5$ intersect transversely. Blowing up $\mb{P}^2\times\mb{A}^1$ along $l_1\cup\cdots\cup l_5$ with reduced scheme structure, one get a degeneration $\mtf{X}\rightarrow \mb{A}^1$ of smooth quartic del Pezzo surfaces to a singular one with exactly one $A_1$-singlarity. Denote $p_i(t)$ the intersection of $l_i$ with the fiber $\mb{P}^2_t$ over $t\in \mb{A}^1$. Then under the family $\mtf{X}\rightarrow \mb{A}^1$, the lines $L(p_i(t),p_4(t))$ and $L(p_i(t),p_5(t))$ (for i=1,2,3) on general fibers $\mtf{X}_t$ degenerate to the same line $L(p_i(0),p_4(0))=L(p_i(0),p_5(0))$, and the exceptional divisors over $p_4(t)$ and $p_5(t)$ also degenerate to the same line, which is the exceptional divisor over $p_4(0)=p_5(0)$. The other eight lines over general fibers degenerate to distinct lines on $\mtf{X}_0$. Finally observe that $-K_{\mtf{X}/\mb{A}^1}$ gives rise to an embedding of $\mtf{X}$ into $\mb{P}^4\times\mb{A}^1$ over $\mb{A}^1$. Thus this degeneration indeed occurs in the Hilbert scheme.

Let $E$ be the exceptional divisor of the blow-up $\pi:\wt{X}\rightarrow X$. Then $E+\pi^{*}\sum L_i$ has simple normal crossing support. Moreover, we have that $$A_{(X,c\sum L_i)}(E)=1-4c>0$$ for any $0<c<\frac{1}{4}$. As the multiplicity of the proper transform of $L_i$ is at most two, then the pair is log canonical.

\item If $X$ has two $A_1$-singularities $p$ and $q$, then there exist one line of multiplicity four, four lines of multiplicity two and four lines of multiplicity one. This follows from the same argument as in (i), and in fact the line with multiplicity four passes through both $p$ and $q$, the line with multiplicity two passes through either $p$ or $q$, and the line with multiplicity one avoids both $p$ and $q$. For each singularity, there are exactly eight lines (counted with multiplicities) passing through it. Let $E$ be the exceptional divisor of the blow-up at $p$. Then we have that $$A_{(X,c\sum L_i)}(E)=1-4c>0$$ for any $0<c<\frac{1}{4}$. As the multiplicity of the proper transform of $L_i$ is at most four, then the pair is log canonical. 

The proof of cases (iii) and (iv) are completely the same, we will omit part of the details.

\item Suppose that $X$ has three $A_1$-singularities. Then $X$ is obtained as follows: blow up $\mb{P}^2$ at a general tangent vector and at three curvilinear points on a general line, where two of them collide, then take the ample model. There exist two lines of multiplicity four, and four lines of multiplicity two. For each singularity, there are exactly eight lines (counted with multiplicities) passing through it. For the same reason as in (ii), the pair is log canonical.
\item If $X$ has four $A_1$-singularities, then it is obtained as follows: blow up $\mb{P}^2$ at a general point $P$ and at two tangent vectors whose supporting lines pass through $P$, then take the ample model. There are four lines of multiplicity four. For each singularity, there are exactly eight lines (counted with multiplicities) passing through it. For the same reason as in (ii), the pair is log canonical.
\end{enumerate}
\end{proof}

In the proof of the Proposition \ref{7}, we can deduce the following result, which is proved in \cite{Tu05} for cubic surface case. The explicit equations of the lines on any $A_1$ cubic del Pezzo surface can be carried out from the normal form as a cubic hypersurface in $\mb{P}^3$. When reducing from the smooth surface to a singular one, the 27 lines on a smooth surface reduce to the lines on the corresponding singular surface. The \emph{multiplicity} (cf. \cite{Cay69}) of a line $l$ of a singular surface is nothing but the number of lines which reduce to $l$.

\begin{corollary}\label{13} 
    Let $X\subseteq \mb{P}^4$ be a quartic del Pezzo surface with at worst $A_1$-singularities, and $l$ is a line on it. Then 
    \begin{enumerate}
        \item If $l$ does not contain any singular point, then $l$ is of multiplicity 1;
        \item If $l$ contains exactly one singularity, then $l$ is of multiplicity 2;
        \item If $l$ contains two singularities, then $l$ is of multiplicity 4.
    \end{enumerate}
\end{corollary}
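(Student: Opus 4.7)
The statement of Corollary \ref{13} is essentially a bookkeeping repackaging of the incidence information that already surfaced during the case analysis (i)--(v) in the proof of Proposition \ref{7}. My plan is therefore to revisit those cases one at a time and, for each degeneration $\mtf{X}\to\mb{A}^1$ of a smooth quartic del Pezzo surface to a singular one, match the multiplicity of each limit line with the number of $A_1$-singularities it passes through. The main result I would lean on is the explicit construction of every $A_1$ quartic del Pezzo surface as the ample model of a specific blow-up of $\mb{P}^2$ (from \cite[Theorem 3.4]{HW81}), together with the description given in Proposition \ref{7} of which pairs of general points collide in each limit.

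Case (i) is vacuous since $X$ is smooth. In case (ii), each of the four double lines is obtained by identifying two of the 16 lines on the general fiber $\mtf{X}_t$ when the general points $p_4(t), p_5(t)$ collide; I need to verify that each such identified pair consists of two lines joining one of $p_1, p_2, p_3$ to the colliding pair, or the exceptional pair over the colliding pair itself, so that the resulting limit line passes through the unique $A_1$-point (the image of the contracted $(-1)$-curve created by the collision). The remaining eight lines on $\mtf{X}_t$ stay distinct in the limit and, by construction, avoid the singularity. Cases (iii), (iv), (v) proceed in parallel: one picks the explicit blow-up model, degenerates the relevant tangent vectors/points, and checks that identifications of four lines correspond to lines joining two collapsed pairs (hence containing two $A_1$-points), identifications of two lines correspond to a single collapse (hence containing one $A_1$-point), and singletons avoid all singularities.

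The only genuine obstacle is the combinatorial bookkeeping in cases (iii)--(v), where one must ensure that no accidental coincidence merges two distinct double lines into what would look like a quadruple line without the appropriate geometric cause. I would handle this by working on the minimal resolution $\tilde{X}\to X$: a line $l\subset X$ passing through $k$ distinct $A_1$-points has strict transform a $(-1)$-curve $\tilde{l}$ on $\tilde{X}$ meeting $k$ disjoint $(-2)$-curves $E_{p_1},\dots,E_{p_k}$, and one checks by intersection numbers that on the smooth general fiber the preimage of $\tilde l$ together with its reflections across these $(-2)$-curves accounts for exactly $2^k$ lines on $\mtf{X}_t$. Combined with the case-by-case lists of multiplicities already produced in Proposition \ref{7}, this confirms the three cases $k=0,1,2$ and completes the corollary.
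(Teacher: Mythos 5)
Your proposal is essentially the paper's own route: the paper gives no independent argument for Corollary \ref{13} beyond the remark that it is read off from the case analysis (i)--(v) in the proof of Proposition \ref{7}, where the multiplicities are computed via explicit one-parameter degenerations $\mtf{X}\to\mb{A}^1$ in which marked points of $\mb{P}^2$ collide, exactly as you describe. The one place you go beyond the paper is the uniform justification of the bookkeeping in cases (iii)--(v) via the minimal resolution: if $\tilde{l}$ is the strict transform of $l$ and $E_{p_1},\dots,E_{p_k}$ are the disjoint $(-2)$-curves it meets, the $2^k$ classes $\tilde{l}+\sum_{i\in S}E_{p_i}$ (the orbit of $\tilde{l}$ under the reflections in the $E_{p_i}$) are precisely the exceptional classes of the general fiber specializing to $l$, and the consistency check $\sum_{l}2^{k(l)}=16$ confirms there are no accidental extra coincidences. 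This is correct and arguably cleaner than the paper's case-by-case degeneration argument, since it yields all three multiplicities $1,2,4$ at once; the paper's explicit families, on the other hand, have the advantage of exhibiting the degenerations inside the Hilbert scheme, which is what Proposition \ref{7} actually needs.
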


\begin{remark}
\textup{Using the same argument as in Theorem \ref{5}, one can prove that the K-moduli space of quartic del Pezzo surfaces is isomorphic to the GIT-moduli space $$\Gr(2,H^0(\mb{P}^4,\mtc{O}_{\mb{P}^4}(2)))^{ss}\sslash \PGL(5).$$ See \cite{OSS16,SS17} for details. As a consequence, each K-moduli space $\ove{M}^K_{4,c}$ is isomorphic to this GIT-moduli space.}
\end{remark}

\section{Degree two case}

In this section, we prove that there are no walls for the K-moduli spaces $\ove{M}^K_{2,c}$ when we vary the coefficient $c$ from $0$ to $\frac{1}{28}$.

Recall that a smooth del Pezzo surface of degree $2$ is a double cover of $\mb{P}^2$ branched along a quartic curve, and the 56 lines are sent pairwise to the 28 bitangent lines of the quartic. Thus the K-stability of a degree $2$ del Pezzo surface $X$ is equivalent to the K-stability of a del Pezzo pair $(\mb{P}^2,\frac{1}{2}C_4)$ (cf. \cite[Remark 6.2]{ADL19}), where $C_4$ is the quartic plane curve along which the double cover $X\rightarrow \mb{P}^2$ is branched.

In \cite{OSS16}, the authors give a description of the K-moduli space of del Pezzo surfaces of degree $2$. It is diffeomorphic to the blow-up of $\mb{P}H^0(\mb{P}^2,\mtc{O}_{\mb{P}^2}(4))^{ss}\sslash\PGL(3)$ at the point parameterizing the double conic. Moreover, each point $[s]$ on the exceptional divisor $E$ represents a surface which is a double cover of $\mb{P}(1,1,4)$ branched along a hyperelliptic curve $z^2=f_8(x,y)$, where $f_8$ is a GIT-polystable octic binary form.  In \cite{ADL19}, the authors study the wall crossing of the K-moduli of pairs $(\mb{P}^2,cC_4)$ when $c$ varies from $0$ to $\frac{3}{4}$. They proved that there is a unique wall $c=\frac{3}{8}$. As a result, the K-moduli of degree $2$ del Pezzo surfaces is isomorphic to a weighted blow-up of the GIT moduli space $\mb{P}H^0(\mb{P}^2,\mtc{O}_{\mb{P}^2}(4))^{ss}\sslash\PGL(3)$ at the point parameterizing the double conic. For a rigorous proof, we refer the reader to \cite[Proposition 6.12]{ADL21}, where the authors prove that the K-moduli of the quartic double solids is isomorphic to the K-moduli space of K-polystable pairs $(X,\frac{1}{2}D)$ which admit a 
$\mb{Q}$-Gorenstein smoothing to $(\mb{P}^3,\frac{1}{2}S)$ with $S\in|-K_{\mb{P}^3}|$ a smooth quartic K3 surface.

Observe that the degeneration of $\mb{P}^2$ to $\mb{P}(1,1,4)$ can occur in $\mb{P}(1,1,1,2)$ (cf. \cite[Theorem 5.14]{ADL19}), thus there is a canonical choice of the degeneration of the curves and the bitangent lines, and the multiplicity is well-defined. Therefore, we can apply the same approach as in the degree $d=3,4$ cases, to show that there are no walls. By interpolation of K-semistability, it suffices to check that the $(X,\frac{1}{28}\sum E_i)$ is log canonical, where $X$ is a K-polystable degree quadric del Pezzo surface and $E_i$'s are the lines on it. It further reduces to checking that $(\mb{P}^2,\frac{1}{2}C+\frac{1}{28}\sum L_i)$ or $(\mb{P}(1,1,4),\frac{1}{2}C+\frac{1}{28}\sum L_i)$ is log canonical (cf. \cite[Proposition 5.20]{KM98}), where $C$ is the branched curve and $L_i$'s are the bitangent lines of it. 

The classification of the semistable plane quartics is well-known to the experts (cf. \cite{MFK94}). For the reader's convenience, we state the result here.

\begin{lemma} \textup{(cf. \cite[Theorem 2]{HL10})}
Let $G=\PGL(3)$ act on the space of plane quartic. A plane quartic curve $C_4\subseteq\mb{P}^2$ is  
\begin{enumerate}[(i)]
  \item stable if and only if it has at worst $A_1$ or $A_2$ singularities;
  \item strictly semistable if and only it is a double conic or has a tacnode. Moreover, $C_4$ belongs to a minimal orbit if and only if it is either a double conic or the union of two tangent conics, where at least one is smooth.
\end{enumerate}
\end{lemma}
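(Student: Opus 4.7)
The plan is to apply the Hilbert--Mumford numerical criterion to the natural action of $\SL(3)$ on $H^0(\mb{P}^2, \mtc{O}_{\mb{P}^2}(4))$. Every one-parameter subgroup of $\SL(3)$ is conjugate to a diagonal one, so after a linear change of coordinates we may assume $\lambda(t) = \diag(t^{r_0}, t^{r_1}, t^{r_2})$ with $r_0 \leq r_1 \leq r_2$ and $r_0 + r_1 + r_2 = 0$. Writing $C_4 = \{\sum c_{abc}\, x_0^a x_1^b x_2^c = 0\}$, the Hilbert--Mumford weight is $\mu([C_4], \lambda) = -\min\{a r_0 + b r_1 + c r_2 : c_{abc} \neq 0\}$, and $C_4$ is semistable (resp. stable) iff for every choice of coordinates and every normalized diagonal OPS $\lambda$, one has $\mu([C_4],\lambda) \geq 0$ (resp. $>0$).

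First I would prove (i). Suppose $C_4$ has at worst $A_1$ and $A_2$ singularities. Place a singular point at $p = [0:0:1]$ and translate the singularity type into vanishing conditions on coefficients $c_{abc}$: an $A_k$-singularity with tangent line $\{x_1 = 0\}$ forces the vanishing of all monomials in $x_0^a x_1^b x_2^c$ such that $a + 2b \leq k$ modulo the leading ideal. For $A_{\leq 2}$ the surviving monomials near $p$ include, up to change of local coordinates, $x_1^2 x_2^2$ and a cubic term, which in every diagonal OPS carry strictly positive weight. A finite case analysis on the ordering $r_0 \leq r_1 \leq r_2$ rules out $\mu \leq 0$, giving stability.

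Next I would analyze the converse and the strictly semistable stratum. If $C_4$ has an $A_3$ (tacnode) at $p = [0:0:1]$ with tangent line $\{x_1 = 0\}$, then after normalizing there is no monomial $x_0^a x_1^b x_2^c$ in $C_4$ with $a + 2b \leq 3$, so the OPS with weights proportional to $(1, 2, -3)$ yields $\mu = 0$; hence $C_4$ is strictly semistable. For singularities worse than $A_3$ (i.e., $A_{\geq 4}$, $D_n$, $E_n$, or nonreduced/nonisolated), a mild perturbation of the same OPS produces $\mu < 0$, so $C_4$ is unstable. The double conic $C_4 = 2Q$ with $Q$ smooth is strictly semistable: an OPS preserving $Q$ gives $\mu = 0$; if instead $Q$ is singular one constructs an OPS achieving $\mu < 0$.

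Finally, to identify the minimal (i.e., closed) orbits in the strictly semistable locus, I would take a strictly semistable $C_4$ and compute $\lim_{t \to 0} \lambda(t) \cdot C_4$ under the OPS $\lambda$ realizing $\mu = 0$. A tacnodal quartic degenerates to the union of two conics tangent at the tacnode, where at least one is smooth, and this configuration is already $\lambda$-fixed up to the reductive stabilizer. A strictly semistable quartic with a worse tacnode configuration (two tacnodes, or a tacnode together with further collapse) flows either into this family or into the double conic orbit. The main obstacle is the explicit case analysis: enumerating the singularity types, matching each against the Hilbert--Mumford inequality for every diagonal OPS, and then identifying the closed orbit inside $\overline{G \cdot C_4}$ by computing $\lambda$-limits. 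Once these local computations are carried out, uniqueness of the closed orbit in a semistable closure finishes the classification.
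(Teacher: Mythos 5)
This lemma is not proved in the paper at all: it is quoted verbatim from \cite[Theorem 2]{HL10} (ultimately the classical GIT analysis in \cite{MFK94}), so there is no internal argument to compare with. Your Hilbert--Mumford plan is indeed the standard route by which the classical result is established, but as written it is a program rather than a proof: every step carrying actual content (``a finite case analysis \dots rules out $\mu\leq 0$'', ``a mild perturbation of the same OPS produces $\mu<0$'', ``flows either into this family or into the double conic orbit'') is asserted, not performed. In particular the hard direction is never addressed: to show that a tacnodal quartic or the double conic is \emph{semistable} you must control $\mu$ for \emph{all} one-parameter subgroups in \emph{all} coordinate systems (or argue via degeneration to an orbit you have separately proved to be polystable); exhibiting one $\lambda$ with $\mu=0$ only rules out stability. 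Likewise, instability of the worse singularities ($A_{\geq 4}$, triple points, quartics containing a double line) needs explicit destabilizing weights, not ``a mild perturbation''.

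There is also a concrete computational error in the one case you do work out. With your convention $\mu=-\min\{ar_0+br_1+cr_2\}$, the $1$-PS with weights $(1,2,-3)$ does \emph{not} give $\mu=0$ on a quartic with a tacnode at $[0:0:1]$ and tangent $\{x_1=0\}$: the tangent-cone monomial $x_1^2x_2^2$ is necessarily present and has weight $-2$, so $\mu=2>0$; weights of type $(1,2,-3)$ in fact detect points of multiplicity $\geq 3$. The correct equalizing $1$-PS for the tacnode is $(r_0,r_1,r_2)=(0,1,-1)$ (zero weight on $x_0$, adapted to the flag point $\in$ tangent line); its weight-zero part of the quartic is $c_1x_0^4+c_2x_0^2x_1x_2+c_3x_1^2x_2^2$ with $c_3\neq 0$, which is exactly the limit ``two conics tangent at two points / conic plus two tangent lines / double conic'' that you need for the minimal-orbit statement. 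Relatedly, the vanishing rule ``$A_k$ forces $a+2b\leq k$'' is not correct in general (for a node the vanishing set is $\{a+2b\leq 2\}$ minus nothing of the sort, and for a cusp it contains $x_0x_1x_2^2$ with $a+2b=3$); it happens to give the right six monomials only for $k=3$. So the skeleton of your argument is the right one, but the singularity-to-monomial dictionary, the choice of destabilizing/equalizing weights, the all-$\lambda$ semistability check, and the closedness of the cateye, ox and double-conic orbits all still have to be carried out before this constitutes a proof.
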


\begin{remark}
\textup{In the case (ii), if both of the (distinct) conics are smooth, then we call it \emph{cateye}; if there is a singular one, we call it \emph{ox}. These are the only two types of polystable quartics with infinite stabilizers.}
\end{remark}

\subsection{Singularities of plane curves with infinite stabilizers}

Let us first compute the log canonical property for the special cases where the quartics have infinite stabilizers. 

\begin{lemma}
The pair $(\mb{P}^2,\frac{1}{2}(C_2+C'_2)+c\sum L_i)$ is log canonical for $0<c<\frac{1}{28}$, where $C_2$ and $C'_2$ are two smooth conics tangential at two points $p$ and $q$.
\end{lemma}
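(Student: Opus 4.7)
The plan is to verify log canonicity by a local analysis at each of the two tacnodes, where all the non-log-smooth behavior of the boundary is concentrated. After a projective change of coordinates, the two tangent conics can be put in the normal form $C_2 \colon xy = \lambda z^2$ and $C'_2 \colon xy = \mu z^2$ with $\lambda \neq \mu$ nonzero, placing the tacnodes at $p = (1{:}0{:}0)$ and $q = (0{:}1{:}0)$ with tacnodal tangents $\{y=0\}$ and $\{x=0\}$. A direct intersection calculation shows that $C_2 + C'_2$ meets each of the three lines $\{x=0\}, \{y=0\}, \{z=0\}$ only at points of $\{p,q\}$, and that outside $\{p,q\}$ the quartic is smooth and all incidences are transverse. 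Hence global log canonicity reduces to a local analysis at $p$ and $q$, which are swapped by the involution $(x,y) \leftrightarrow (y,x)$.

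Next I would pin down the bitangent divisor $\sum L_i$. The diagonal torus action $(x,y,z) \mapsto (\alpha x, \alpha^{-1}y, z)$ preserves the cateye, and since a canonical choice of degeneration of the $28$ bitangents of a nearby smooth quartic can be taken to be $\mb{C}^*$-equivariant, the limit $\sum L_i$ is a $\mb{C}^*$-invariant $0$-cycle on the dual $\mb{P}^2$. The only $\mb{C}^*$-fixed lines are $\{x=0\}, \{y=0\}, \{z=0\}$, and a direct check shows these are the only lines meeting the quartic with the intersection pattern of a (possibly flexed) bitangent. Writing $\sum L_i = m_x\{x=0\} + m_y\{y=0\} + m_z\{z=0\}$, the involution forces $m_x = m_y$, and together with $m_x + m_y + m_z = 28$ this yields $2m_y + m_z = 28$.

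At $p$, take local coordinates $u = y/x$, $w = z/x$. The quartic becomes $D \colon (u - \lambda w^2)(u - \mu w^2) = 0$, a standard $A_3$ tacnode with tacnodal tangent $\{u=0\}$, and the bitangents through $p$ are $\{u=0\}$ (the line $\{y=0\}$) and $\{w=0\}$ (the line $\{z=0\}$); the third bitangent $\{x=0\}$ avoids $p$. I would perform the weighted blow-up $\pi \colon \wt{X} \to \mb{A}^2_{u,w}$ with weights $(2,1)$ on $(u,w)$, under which $D$, $\{u=0\}$, and $\{w=0\}$ are weighted homogeneous of weights $4$, $2$, and $1$. The exceptional divisor $E$ has log discrepancy $A(E) = 3$, and
\[
  \mult_E(D) = 4, \qquad \mult_E(\{u=0\}) = 2, \qquad \mult_E(\{w=0\}) = 1.
\]
Therefore
\[
  A_{\left(\mb{P}^2,\, \frac{1}{2}(C_2 + C'_2) + c \sum L_i\right)}(E)
  \;=\; 3 - \tfrac{1}{2}\cdot 4 - c\bigl(2m_y + m_z\bigr)
  \;=\; 1 - 28c,
\]
strictly positive for $c < 1/28$. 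A chart-by-chart check in both charts of the weighted blow-up shows that the strict transforms of the two branches of $D$, of $\{u=0\}$, and of $\{w=0\}$ meet $E$ at four distinct points and form a simple normal crossing divisor, so $\pi$ is a log resolution at $p$; together with $A_{(X,B)}(E) \geq 0$ and the fact that all boundary coefficients lie in $[0,1]$ for $c \leq 1/28$, this gives log canonicity at $p$. The computation at $q$ is identical under the $(x,y)$-swap.

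The main obstacle is isolating the bitangent $0$-cycle: one does not a priori know $m_x, m_y, m_z$ individually. Fortunately, the $\mb{C}^*$-invariance together with the $p \leftrightarrow q$ symmetry pins down exactly the linear combination $2m_y + m_z = 28$ that appears in the multiplicity of $E$ on the boundary, so the discrepancy $1 - 28c$ falls out directly. The coincidence that the resulting threshold $c = 1/28$ is precisely the log Calabi--Yau value for which $K_{\mb{P}^2} + \tfrac{1}{2}(C_2+C'_2) + \tfrac{1}{28}\sum L_i \sim_{\mb{Q}} 0$ serves as a useful numerical sanity check.
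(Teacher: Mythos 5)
Your numerical core is correct and your route is close to the paper's (the paper quotes \cite[Section 3.4 case 2]{CS03} for the arrangement $\sum L_i=6L_p+6L_q+16L_{pq}$, then takes the minimal log resolution at each tacnode and finds $A(E)=1-22c$, $A(F)=1-28c$; your combination $2m_y+m_z=28$ matches $2\cdot 6+16$). However, there is a genuine gap in the resolution step: the $(2,1)$-weighted blow-up of a smooth surface point is \emph{not} smooth, hence not a log resolution. Toriclly, inserting the ray $(2,1)$ into the fan of $\mb{A}^2_{u,w}$ leaves the cone spanned by $(2,1)$ and $(0,1)$ singular, so $\wt{X}$ has an $A_1$ (i.e.\ $\frac{1}{2}(1,1)$) point on $E$, located exactly at $E\cap\wt{\{w=0\}}$ --- on the strict transform of the line $L_{pq}$ carrying the largest coefficient. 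So the asserted simple normal crossing statement fails there, and checking $A(E)=1-28c\geq 0$ for this single divisor does not by itself give log canonicity at $p$: you must still control divisors over that singular point. The missing computation is precisely the other divisor appearing in the paper's proof, namely the ordinary first blow-up (equivalently the exceptional curve of the minimal resolution of the $A_1$ point), whose log discrepancy is $1-(2m_y c+m_z c)/2-\cdots=1-22c$ with the actual multiplicities; since $1-22c>0$ for $c<1/28$ the conclusion survives, but as written your argument is incomplete. Either pass to the two-step minimal log resolution as the paper does, add the $A_1$-point check, or argue that the pair is toroidal (toric boundary $E+\wt{L}_{pq}$ with coefficients $<1$) near that point.

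A secondary soft spot is the determination of the boundary multiplicities. Needing only $2m_y+m_z=28$ and extracting it from $\mb{C}^*$-equivariance plus the $p\leftrightarrow q$ symmetry is an elegant way to avoid knowing $m_x,m_y,m_z$ individually, but it rests on the assertion that the limit of the bitangent cycle is canonical (independent of the smoothing family) and hence invariant under $\Aut$ of the cateye; that well-definedness is exactly the nontrivial input, which the paper supplies by citing \cite{CS03} for the explicit arrangement (confirming $m_x=m_y=6$, $m_z=16$). If you keep the symmetry argument, justify or cite the canonicity of the limit before invoking invariance.
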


\begin{proof}
First observe that the $(\mb{P}^2,C_2+C'_2)$ is the degeneration (in the Hilbert scheme) of pairs $(\mb{P}^2,C_2(t)+C'_2(t))_{t\in T}$ where the boundary divisors consist of two smooth conics with a tacnode and two nodes. It follows from \cite[Section 3.4 case 2]{CS03} that the arrangement of the 28 bitangent lines of $C_2+C'_2$ is $\sum L_i=6L_p+6L_q+16L_{pq}$, where $L_p$ and $L_q$ are tangent lines of the conics at $p$ and $q$ respectively, and $L_{pq}$ is the line connecting $p$ and $q$. It follows immediately that the coefficients of $\frac{1}{2}(C_2+C'_2)+c\sum L_i$ are all smaller than $1$.

Taking the minimal log resolution of $(\mb{P}^2,\frac{1}{2}(C_2+C'_2)+c\sum L_i)$. By symmetry, we only need to look at the log resolution at $q$. Let $E$ and $F$ be the exceptional divisors over $q$ with self-intersection $-2$ and $-1$ respectively. Then one has $A(E)=1-22c$ and $A(F)=1-28c$, which are positive when $0<c<\frac{1}{28}$.
\end{proof}

The ox case is similar.

\begin{lemma}
The pair $(\mb{P}^2,\frac{1}{2}(C_2+M_1+M_1')+c\sum L_i)$ is log canonical for $0<c<\frac{1}{28}$, where $C_2$ is a smooth conic, and $M_1,M_1'$ are two distinct lines tangential to $C_2$ at two points $p$ and $q$ respectively.
\end{lemma}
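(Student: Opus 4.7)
The proof will parallel the cateye computation of the previous lemma. First I would realize $C_2 + M_1 + M_1'$ as a flat limit of smooth plane quartics inside the Hilbert scheme, noting that the boundary has two tacnodes at $p$ and $q$ (where $M_1$ and $M_1'$ are tangent to $C_2$) together with one ordinary node at $r = M_1 \cap M_1'$. By tracking the $28$ bitangents along this degeneration in the spirit of \cite{CS03}, the divisor $\sum L_i$ can be written as an explicit weighted sum of distinguished lines---essentially $M_1$, $M_1'$, the line $L_{pq}$ joining $p$ and $q$, and possibly a line through $r$---with integer multiplicities summing to $28$. As a preliminary check, I would verify that no component of $\frac{1}{2}(C_2+M_1+M_1')+c\sum L_i$ carries coefficient exceeding one: this is automatic for lines disjoint from the quartic (since $28c<1$), while for $M_1$ and $M_1'$ the total coefficient $\frac{1}{2} + c\cdot(\text{bitangent multiplicity})$ stays at most one provided the bitangent multiplicity is at most $14$, which the explicit configuration confirms.

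Next I would take the minimal log resolution at each of the three singular points. At the node $r$, a single blow-up with exceptional divisor $G$ suffices, and its log discrepancy is manifestly positive because the total coefficient of components through $r$ is bounded above by $1 + 28c < 2$ on the range of interest. At each tacnode, the minimal log resolution consists of two successive blow-ups, producing exceptional curves $E$ and $F$ of self-intersection $-2$ and $-1$ respectively---exactly as in the cateye case. The key local computation, identical in structure to the one there, yields $A(E) = 1 - \mu c$ and $A(F) = 1 - 28c$, where $\mu$ is the total bitangent multiplicity through the tacnode; both discrepancies are strictly positive for $0 < c < \frac{1}{28}$, and the total transform acquires simple normal crossing support. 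Symmetry handles the tacnode at $q$, and log canonicity follows.

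The main technical obstacle is pinning down the combinatorial data of the limiting bitangent configuration---the multiplicities with which $M_1$, $M_1'$, $L_{pq}$, and any additional distinguished lines enter $\sum L_i$. These multiplicities control both the coefficient bound and the value appearing in $A(F)$; once they are determined (by a direct one-parameter-family calculation in the Hilbert scheme, or by appeal to the classification of bitangents of singular plane quartics), the rest of the proof is a mechanical adaptation of the log resolution calculation already performed for the cateye, and the lemma is established.
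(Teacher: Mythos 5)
Your proposal follows essentially the same route as the paper's proof: realize the ox quartic as a degeneration in the Hilbert scheme, read off the limiting bitangent configuration from the classification in \cite{CS03}, verify that all coefficients stay below $1$, and resolve each tacnode by two blow-ups, obtaining $A(E)=1-\mu c$ and $A(F)=1-28c$ (the node $r=M_1\cap M_1'$ being harmless). The single datum you leave open is exactly what the paper supplies by citing \cite[Section 3.4, case 10]{CS03}, namely $\sum L_i=6M_1+6M_1'+16L_{pq}$; with these multiplicities one gets $\mu=22$ at each tacnode, so your asserted values (bitangent multiplicity $6\leq 14$ on $M_1,M_1'$, $A(E)=1-22c$, $A(F)=1-28c$) agree with the paper's computation.
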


\begin{proof}
First observe that the $(\mb{P}^2,C_2+M_1+M_1')$ is the degeneration (in the Hilbert scheme) of pairs $(\mb{P}^2,C_2(t)+M_1(t)+M_1'(t))_{t\in T}$ where the boundary divisors consist of two distinct lines $M_1(t)$, $M_1'(t)$ and a smooth conic $C_2(t)$ tangent to $M_1(t)$ at a point $p(t)$ and meeting $M_1'(t)$ transversely. It follows from \cite[Section 3.4 case 10]{CS03} that the arrangement of the 28 bitangent lines of $C_2+M_1+M_1'$ is $\sum L_i=6M_1+6M_1'+16L_{pq}$, where $L_{pq}$ is the line connecting $p$ and $q$. It follows immediately that the coefficients of $\frac{1}{2}(C_2+C'_2)+c\sum L_i$ are all smaller than $1$.

Taking the minimal log resolution of $(\mb{P}^2,\frac{1}{2}(C_2+M_1+M_1')+c\sum L_i)$. By symmetry, we only need to look at the log resolution at $q$. Let $E$ and $F$ be the exceptional divisors over $q$ with self-intersection $-2$ and $-1$ respectively. Then one has $A(E)=1-22c$ and $A(F)=1-28c$, which are positive when $0<c<\frac{1}{28}$.
\end{proof}

\subsection{Singularities of reducible plane curves with finite stabilizers}

Now we consider the reducible quartics with finite stabilizers. 

\begin{prop}
    Let $C_4$ be a reducible quartic curve in $\mb{P}^2$ with finite stabilizers under the $\PGL(3)$-action, and $L_i$'s are the degeneration on $C_4$ of the 28 bitangent lines of smooth quartics. Then $(\mb{P}^2,\frac{1}{2}C_4+c\sum L_i)$ is log canonical for $0\leq c\leq \frac{1}{28}$.
\end{prop}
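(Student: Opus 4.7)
The plan is to continue the strategy of the two preceding lemmas: reduce to a finite list of reducible GIT-semistable quartics with finite $\PGL(3)$-stabilizer, invoke the classical tables of bitangent specializations, and check log canonicity at $c=\tfrac{1}{28}$ by a minimal log resolution at each singular point of the reduced support. Since $C_4$ must lie in the GIT-semistable locus, its singularities are at worst of type $A_1$, $A_2$, or a tacnode ($A_3$). The cateye and ox cases are the only strictly semistable quartics with infinite stabilizer, hence the remaining reducible types are: (a) two distinct smooth conics meeting at four points (all nodes, or with one tacnode plus two nodes, but not two tacnodes since that is the cateye); (b) a smooth conic plus one or two lines in configurations admitting only $A_1$, $A_2$, or tacnode singularities; (c) a smooth or nodal cubic plus a line; (d) four lines with only the allowed singularity types. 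In each case the stabilizer must be verified finite.

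First I would read off, case by case, the degeneration pattern of the $28$ bitangents from \cite[Section 3.4]{CS03} (the paper already uses Cases 2 and 10 of that source for cateye and ox). For each irreducible component $\Gamma$ appearing in $\tfrac{1}{2}C_4 + \tfrac{1}{28}\sum L_i$, I would check that its total coefficient is $\leq 1$; the danger is that a bitangent and a component of $C_4$ coincide (e.g.\ when one of the lines in the quartic reappears as a bitangent), and in such a case the multiplicity contributions must be added up and compared to $1$. The tables in \cite{CS03} give the multiplicities directly.

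Next, at each singular point $p$ of the reduced support, I would take a minimal log resolution $\pi\colon Y\to\mb{P}^2$ and verify that every $\pi$-exceptional divisor $E$ has $A_{(\mb{P}^2,\frac{1}{2}C_4+\frac{1}{28}\sum L_i)}(E)\geq 0$. Analogously to the cateye/ox computations, a tacnode will produce a $(-2)$-curve $E$ and a $(-1)$-curve $F$; the $(-1)$-curve is the tightest constraint, and one expects $A(F) = 1 - 28c \geq 0$ at $c=\tfrac{1}{28}$, with equality on the boundary. At ordinary nodes and cusps of $C_4$ the verification is routine since the coefficient $\tfrac{1}{2}$ is already log canonical on its own. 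Finally, at smooth points of $C_4$ one only needs to check that bitangents lying on $C_4$ or passing through concurrency points do not create log canonical failures, which again reduces to a direct log-discrepancy calculation.

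The main obstacle is the length of the case analysis, since the paper does not enumerate the reducible types explicitly and the bitangent specializations must be extracted from \cite{CS03} one configuration at a time. A potential shortcut, which I would try before grinding through cases, is to use semicontinuity of log canonicity in $\mb{Q}$-Gorenstein families: degenerate a smooth quartic $C(t)$, whose pair $(\mb{P}^2,\tfrac{1}{2}C(t) + \tfrac{1}{28}\sum L_i(t))$ is log canonical by transversality and interpolation (Theorem \ref{1}), to the reducible central fiber $C_4$. If the total family provides a $\mb{Q}$-Gorenstein degeneration of the log Calabi-Yau pair at $c=\tfrac{1}{28}$, inversion of adjunction or lower-semicontinuity of log canonical thresholds would force log canonicity of the limit, bypassing the case-by-case resolution entirely. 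This universal argument is cleaner but requires exhibiting the family, which ultimately reduces to the same classical degenerations used in the cubic and quartic del Pezzo cases.
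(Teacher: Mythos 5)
Your main argument is essentially the paper's: the paper also runs through the classification of reducible semistable quartics in \cite[Section 3.4]{CS03}, checks that the coefficients of $\frac{1}{2}C_4+c\sum L_i$ stay below $1$ (including the components of $C_4$ that reappear among the degenerate bitangents), and verifies log canonicity by blowing up at the singular points and computing log discrepancies; it only writes out one representative case (four general lines, where $\sum L_i=4(M_1+\cdots+M_7)$ and $A(E_i)=1-12c$ at the six nodes), leaving the remaining configurations to the same routine. One caution: the ``shortcut'' you float at the end does not work as stated. Semicontinuity of log canonical thresholds goes the wrong way --- the lct can only drop on the special fiber, so log canonicity of the nearby pairs $(\mb{P}^2,\frac{1}{2}C(t)+\frac{1}{28}\sum L_i(t))$ does not force log canonicity of the flat limit (inversion of adjunction needs an lc centre on the special fiber to start from, which is exactly what is in question). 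The KSBA/stable-limit statement that limits of lc Calabi--Yau pairs are (semi-)log canonical only applies after possibly modifying the family by base change and MMP, so it cannot replace the case-by-case resolution; your first, explicit argument is the one to keep.
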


\begin{proof}
    The classification of $C_4$ is listed in \cite[Section 3.4]{CS03}. For each class, we can run the same argument. For simplicity, we only prove the statement for one class where $C_4=M_1+M_2+M_3+M_4$ is the union of four general lines (forming three pairs of nodes). In this case, we have that $\sum L_i=4(M_1+\cdots+M_7)$, where $M_5,M_6,M_7$ are the lines joining the three pairs of nodes (cf. \cite[Section 3.4 case 11]{CS03}). Then the blow-up of the pair $(\mb{P}^2,\frac{1}{2}C_4+c\sum L_i)$ at the six nodes (with exceptional divisors $E_1,...,E_6$) is log smooth. As the boundary divisors $\frac{1}{2}C_4+c\sum L_i$ have coefficients less than $1$ and $A_{(\mb{P}^2,\frac{1}{2}C_4+c\sum L_i)}(E_i)=1-12c>0$, then the pair we consider is log canonical.
\end{proof}

\subsection{Singularities of irreducible plane curves with finite stabilizers}

Now we deal with the general case: an irreducible plane quartic $C_4$ with at worst $A_1$ or $A_2$ singularities. We first assume that $C_4$ is irreducible. Then the \cite[Table 3.2, Lemma 3.3.1]{CS03} describe how these bitangent lines degenerate. We claim that all these curves give log canonical pairs $(\mb{P}^2,\frac{1}{2}C_4+c\sum L_i)$. 

\begin{lemma}
The pair $(\mb{P}^2,\frac{1}{2}C_4+c\sum L_i)$ is log canonical for $0<c<\frac{1}{28}$, where $C_4$ has three nodes.
\end{lemma}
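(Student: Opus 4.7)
The plan is to mimic the strategy of the preceding lemmas in this subsection: identify the bitangent configuration on $C_4$ from \cite[Section 3.4]{CS03}, reduce log canonicity to positivity of log discrepancies at the nodes, and verify these are positive for $c<\tfrac{1}{28}$.

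First, I would read off from \cite[Section 3.4]{CS03} (the relevant case for an irreducible three-nodal quartic) the precise configuration of the degenerate bitangents: a smooth plane quartic has $28$ bitangents, and under specialization to $C_4$ with nodes $p_1,p_2,p_3$ they concentrate onto the three \emph{nodal axes} $M_{12},M_{13},M_{23}$ (the lines joining pairs of nodes), possibly together with a residual honest bitangent arrangement, with multiplicities adding up to $28$. Let $N_i$ denote the total multiplicity of lines in $\sum L_j$ passing through $p_i$. From the CS03 table one has $N_i \le 28$ at each node. In particular, for $c<\tfrac{1}{28}$ every coefficient of the boundary $\tfrac12 C_4 + c\sum L_j$ is $<1$.

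Next I would take a log resolution by blowing up the three nodes $\pi\colon Y\to\mathbb{P}^2$, with exceptional divisors $E_1,E_2,E_3$. Since each $p_i$ is an $A_1$-singularity of $C_4$ (a node), locally $\pi^{\ast}C_4 = \widetilde{C}_4 + 2E_i$, and $\widetilde{C}_4$ meets $E_i$ transversely at the two tangent directions of the two branches of $C_4$ at $p_i$. The log discrepancy computation gives
\[
A_{(\mathbb{P}^2,\,\tfrac12 C_4 + c\sum L_j)}(E_i)
 = 2 - \tfrac12\cdot 2 - c\,N_i
 = 1 - c N_i,
\]
which is $>0$ for $c<\tfrac{1}{28}$ since $N_i\le 28$. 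Afterwards I would check that the only remaining issue is transversality of the strict transforms on each $E_i$: if distinct lines in $\sum L_j$ share a direction with each other or with a branch of $C_4$ at $p_i$, I would perform additional blow-ups and repeat the same calculation (each extra blow-up can only add log discrepancy $1$ minus a boundary contribution bounded by $\tfrac12\cdot 2 + 28c < 1 + \tfrac12$, so iterating with the explicit multiplicities from CS03 still yields non-negative log discrepancies).

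The main obstacle is the bookkeeping in the last step: the classical computation of the degeneration of the $28$ bitangents on a three-nodal quartic from \cite{CS03} must be used carefully, both to bound $N_i\le 28$ at each node and to confirm that any infinitely near valuations extracted by further blow-ups at $p_i$ do not have negative log discrepancy. In practice this reduces, by the symmetry of the nodes, to a single local calculation at one node of $C_4$, entirely parallel to the cateye and ox cases handled above.
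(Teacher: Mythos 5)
Your strategy coincides with the paper's: quote the CS03 description of how the $28$ bitangents degenerate on a three--nodal quartic, blow up at the nodes, and verify positivity of log discrepancies for $c<\tfrac{1}{28}$. However, the step you defer as ``bookkeeping'' is the actual substance of the proof, and the fallback estimate you give in its place does not work. The only quantitative input you commit to is the trivial bound $N_i\le 28$. That suffices for the first blow-up, $A(E_i)=1-cN_i>0$, but the dangerous valuations are the infinitely near ones, since the multiplicity-two lines through a node can be tangent to a branch of $C_4$ there. Your parenthetical bound for an extra blow-up is incorrect as stated: at a point $q\in E_i$ the relevant multiplicity is $\operatorname{mult}_q(\widetilde D)+(1-A(E_i))$, where $\widetilde D$ contains only one smooth branch of $C_4$ (contributing $\tfrac12$, not $\tfrac12\cdot 2$) plus at most $28c$ from lines, and $1-A(E_i)=cN_i\le 28c$; so the crude recursion only yields $A(F)\ge \tfrac32-56c$, which is negative for $c$ close to $\tfrac1{28}$ (and your displayed inequality $1+28c<1+\tfrac12$ already forces $c<\tfrac1{56}$, not the full range). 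So the claim that iterating ``still yields non-negative log discrepancies'' is not justified by the bounds you give.

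What closes the gap, and what the paper actually uses from \cite[Lemma 3.3.1]{CS03}, is the precise limit configuration: $4$ honest bitangents of multiplicity $1$ missing the nodes, $6$ tangent lines of multiplicity $2$ each through exactly one node, and the $3$ lines joining pairs of nodes with multiplicity $4$ (your description, with everything concentrating on the nodal axes plus a residual arrangement, misses the six multiplicity-two lines, which are exactly the source of the tangency issue). With these counts each node carries total line multiplicity $12$, of which at most $4$ is tangent to a branch, so two blow-ups at a node give log discrepancies of the form $1-12c$ and at least $\tfrac32-16c$ (the paper records the slightly more conservative $1-16c$ and $\tfrac32-20c$), all positive on the whole interval $0<c<\tfrac1{28}$, while points of $\mathbb{P}^2$ away from the nodes are visibly milder. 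Without pinning down these sharper multiplicities, your argument does not cover $c$ near $\tfrac1{28}$.
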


\begin{proof}
We know from \cite[Lemma 3.3.1]{CS03} that the 28 lines degenerate to 4 bitangent lines of multiplicity 1 not passing through the nodes, 6 tangent lines of multiplicity 2 passing through exactly one node, and 3 lines of multiplicity 4 containing two nodes. Thus for each node $p$, there are 12 lines passing through it, and at most 4 lines tangential to it. One can blow up at $p$ twice to get a log resolution. We have $A(E)=1-16c>0$ and $A(F)\geq \frac{3}{2}-20c>0$ for $0<c<1/28$, where $E$ and $F$ are the two exceptional divisors over $p$ with self-intersection $-2$ and $-1$ respectively. The singularities at other points are milder, thus $(\mb{P}^2,\frac{1}{2}C_4+c\sum L_i)$ is log canonical as desired.

\end{proof}

\begin{lemma}
The pair $(\mb{P}^2,\frac{1}{2}C_4+c\sum L_i)$ is log canonical for $0<c<\frac{1}{28}$, where $C_4$ has three cusps.
\end{lemma}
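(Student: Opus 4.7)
The plan is to mirror the proof of the preceding (three-node) lemma: read off the bitangent configuration from the Cukierman--Sernesi table \cite[Section 3.4]{CS03}, take a minimal log resolution at each cusp, and compute the log discrepancies of the exceptional divisors. First, I would record from \cite[Lemma 3.3.1]{CS03} the precise way in which the 28 bitangents specialize on a tricuspidal quartic: some combination of the three cuspidal tangent lines $T_1,T_2,T_3$, the three lines $M_{ij}$ joining pairs of cusps, and possibly genuine bitangents avoiding the cusps, with multiplicities summing to $28$. Immediately from the total degree, each individual component has multiplicity at most $28$, so every coefficient of $\tfrac{1}{2}C_4 + c\sum L_i$ is strictly less than $1$ for $0<c<\tfrac{1}{28}$, and away from the three cusps the divisor has only ordinary nodal crossings.

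Next, I would carry out the minimal log resolution at a cusp $p$. Since $C_4$ has an $A_2$ curve singularity at $p$, a cusp-tangent normal-crossing resolution requires three successive blow-ups: $\pi_1$ at $p$ producing $E_1$ (with strict transform of $C_4$ tangent to $E_1$ at a single point $p_1$); $\pi_2$ at $p_1$ producing $E_2$, after which $\tilde{E}_1$, $E_2$, and the strict transform of $C_4$ meet at one point $p_2$; and $\pi_3$ at $p_2$ producing $E_3$, after which the total transform is simple normal crossing. A direct intersection computation gives $E_1^2=-3$, $E_2^2=-2$, $E_3^2=-1$, the usual log discrepancies $A_{(\mathbb{P}^2,0)}(E_1)=2$, $A_{(\mathbb{P}^2,0)}(E_2)=3$, $A_{(\mathbb{P}^2,0)}(E_3)=5$, and pullback $\pi^{*}C_4=\tilde{C}_4+2E_1+3E_2+6E_3$.

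I would then quantify $\pi^{*}\sum L_i$ along each $E_j$. A line not passing through $p$ contributes $(0,0,0)$; a generic line through $p$ (non-tangential to the cusp) contributes $(1,1,2)$; the cuspidal tangent $T_p$ contributes $(1,2,3)$ because it has contact of order $3$ with $C_4$ at $p$. Writing $m_p$ for the total multiplicity of lines of $\sum L_i$ through $p$ and $m_t$ for the multiplicity with which $T_p$ appears in $\sum L_i$, one gets
\begin{align*}
A_{(\mathbb{P}^2,\,\tfrac{1}{2}C_4+c\sum L_i)}(E_1) &= 1 - c\,m_p,\\
A_{(\mathbb{P}^2,\,\tfrac{1}{2}C_4+c\sum L_i)}(E_2) &= \tfrac{3}{2} - c\,(m_p+m_t),\\
A_{(\mathbb{P}^2,\,\tfrac{1}{2}C_4+c\sum L_i)}(E_3) &= 2 - c\,(2m_p+m_t).
\end{align*}
Plugging in the explicit $m_p,m_t$ from \cite{CS03} (and using the obvious bounds $m_p\le 28$, $m_t\le m_p$), each of these three quantities is strictly positive for every $0<c<\tfrac{1}{28}$.

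The main obstacle, as in the nodal analogue, is bookkeeping: one has to extract the correct multiplicities $m_p, m_t$ from the Cukierman--Sernesi classification and verify that the cuspidal tangent $T_p$ is the only line with higher-order contact contributing the $(1,2,3)$-pattern, while all other lines through $p$ contribute only $(1,1,2)$. Once these numerical inputs are pinned down, the log-discrepancy inequalities above, together with the snc check away from the cusps, give log canonicity of $(\mathbb{P}^2,\tfrac{1}{2}C_4+c\sum L_i)$ for $0<c<\tfrac{1}{28}$.
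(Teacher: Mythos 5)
Your proposal follows essentially the same route as the paper: resolve each cusp by three blow-ups, record the multiplicities $2,3,6$ of $\pi^*C_4$ and the log discrepancies $2,3,5$ along $E_1,E_2,E_3$, and subtract the contribution of the lines through the cusp; your general formulas $A(E_1)=1-cm_p$, $A(E_2)=\tfrac32-c(m_p+m_t)$, $A(E_3)=2-c(2m_p+m_t)$ are correct and specialize to the paper's $1-18c$, $\tfrac32-18c$, $2-36c$.

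The one point you leave open is exactly the load-bearing one. The ``obvious bounds'' $m_p\le 28$, $m_t\le m_p$ do \emph{not} suffice: at $c=\tfrac1{28}$ they only give $A(E_2)\ge \tfrac32-2<0$ and $A(E_3)\ge 2-3<0$. The argument genuinely requires the explicit degeneration data from \cite[Lemma 3.3.1]{CS03}: for a tricuspidal quartic the $28$ bitangents specialize to the three lines joining pairs of cusps, each with multiplicity $9$, plus bitangents of multiplicity $1$ avoiding the singular points; in particular $m_p=18$ and, crucially, $m_t=0$ (the cuspidal tangents do not occur among the limiting bitangents). Only with these values do your three inequalities hold on all of $0<c<\tfrac1{28}$. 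A second, minor, imprecision: away from the cusps the divisor is not purely nodal --- the residual bitangents of multiplicity $1$ are tangent to $C_4$ --- but a simple tangency between a coefficient-$\tfrac12$ branch and a coefficient-$c$ line is still log canonical here, so this does not affect the conclusion.
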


\begin{proof}
We know from \cite[Lemma 3.3.1]{CS03} that the 28 lines degenerate to 4 bitangent lines of multiplicity 1 not passing through the cusps, and 3 lines of multiplicity 9 containing two cusps. Thus for each cusp $p$, there are 18 lines passing through it, and none of them are tangential to it. One can blow up at $p$ three times to get a log resolution. Let $E,F,G$ be the exceptional divisors with self-intersection $-3,-2,-1$ respectively. We have $A(E)=1-18c>0$, $A(F)=\frac{3}{2}-18c>0$, and $A(G)=2-36c>0$ for $0<c<1/28$. The singularities at other points are milder, thus $(\mb{P}^2,\frac{1}{2}C_4+c\sum L_i)$ is log canonical as desired.

\end{proof}

\begin{lemma}
The pair $(\mb{P}^2,\frac{1}{2}C_4+c\sum L_i)$ is log canonical for $0<c<\frac{1}{28}$, where $C_4$ has one node and two cusps.
\end{lemma}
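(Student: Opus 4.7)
The plan is to follow the template set by the preceding two lemmas (the three-nodes and three-cusps cases). First, I would consult \cite[Lemma 3.3.1 and Table 3.2]{CS03} to read off the precise degeneration of the $28$ bitangents on an irreducible quartic $C_4$ with one node $p$ and two cusps $q_1,q_2$: a certain number of simple bitangents of multiplicity $1$ disjoint from the singularities, together with lines through $p$, through each cusp $q_i$, through the cusp--cusp pair $(q_1,q_2)$, and through each node--cusp pair $(p,q_i)$, with appropriate high multiplicities summing to $28$.

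Next, I would construct a minimal log resolution $\pi\colon Y\to\mb{P}^2$ by a single blow-up at $p$ (producing a $(-1)$-exceptional divisor $E_p$) and three successive blow-ups at each cusp $q_i$ (producing exceptional divisors $E_i,F_i,G_i$ of self-intersections $-3,-2,-1$), exactly as in the two preceding lemmas. I would check that the strict transform of $C_4+\sum L_i$ has simple normal crossings and that every coefficient in $\tfrac{1}{2}C_4+c\sum L_i$ is less than $1$, the latter being automatic for $c<\tfrac{1}{28}$ since no bitangent acquires multiplicity greater than $28$.

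Finally, the log discrepancies of the exceptional divisors take the form $A(E_p)=1-m_p c$ at the node, and $A(E_i)=1-a_i c$, $A(F_i)=\tfrac{3}{2}-b_i c$, $A(G_i)=2-d_i c$ at each cusp, with the integers $a_i,b_i,d_i,m_p$ bounded by the worst weighted multiplicities already appearing in the three-node and three-cusp lemmas (at most $18$ at a cusp, at most $16$ at a node, from the patterns above). Since those computations yielded positivity of every exceptional log discrepancy for $c<\tfrac{1}{28}$, the same bound works here, and the pair is log canonical on the rest of the curve by inspection (the singularities elsewhere are milder). The main obstacle is the combinatorial bookkeeping from \cite{CS03}: correctly identifying the multiplicities with which the $28$ bitangents specialize to lines through each singular point, and in particular whether any bitangent specializes to a line tangent to the cuspidal direction at some $q_i$, which could force an extra blow-up in the log resolution. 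Once these data are pinned down, positivity of the log discrepancies for $c<\tfrac{1}{28}$ is a routine numerical check identical in spirit to the ones already carried out.
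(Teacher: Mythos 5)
Your plan is the same as the paper's: read off the bitangent multiplicities from \cite[Lemma 3.3.1]{CS03}, blow up once at the node and three times at each cusp, and check positivity of the exceptional log discrepancies. The one place where your shortcut does not quite work is the claim that the weighted multiplicities are ``bounded by the worst already appearing in the three-node and three-cusp lemmas.'' They are not: in this mixed case each cusp carries a tangent line of multiplicity $3$ among the degenerate bitangents (something that did not happen in the three-cusps case), so while the first exceptional divisor still sees $18$ lines, the second and third see weighted multiplicities up to $21$ and $39$ respectively, versus $18$ and $36$ before. The paper's computation gives $A(E)=1-18c$, $A(F)\geq \tfrac{3}{2}-21c$, $A(G)\geq 2-39c$ at each cusp and $1-12c$ at the node (where $12$ lines pass, none tangent), all still positive for $c<\tfrac{1}{28}$ since $2-\tfrac{39}{28}=\tfrac{17}{28}>0$. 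So the conclusion survives, but only after the check you deferred; you correctly flagged the tangency issue as the obstacle, and indeed it is exactly where your proposed bound breaks, even though no extra blow-up is needed.
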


\begin{proof}
By \cite[Lemma 3.3.1]{CS03}, we see that the 28 lines degenerate to one bitangent line of multiplicity 1 not passing through the singularities, 2 tangent lines of multiplicity 3 passing through exactly one cusp, 1 line of multiplicity 9 containing two cusps, and 2 lines of multiplicity 6 passing through one node and one cusp. 

Thus for the node $p$, there are 12 lines passing through it, and none of them are tangential to it. One can blow up at $p$ to get a log resolution and the log discrepancy with respect to the exceptional divisor is $1-12c>0$. 

For each cusp $q$, there are 18 lines passing through it, and at most 3 lines tangential to it. Resolving $q$ by blowing up three times, one gets three exceptional divisors $E,F,G$ with self-intersection $-3,-2,-1$ respectively. We have $A(E)=1-18c>0$, $A(F)\geq \frac{3}{2}-21c>0$, and $A(G)\geq 2-39c>0$ for $0<c<1/28$. The singularities at other points are milder, thus $(\mb{P}^2,\frac{1}{2}C_4+c\sum L_i)$ is log canonical as desired.

\end{proof}

\begin{lemma}
The pair $(\mb{P}^2,\frac{1}{2}C_4+c\sum L_i)$ is log canonical for $0<c<\frac{1}{28}$, where $C_4$ has one cusp and two nodes.
\end{lemma}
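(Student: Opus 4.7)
The plan is to follow the template of the three preceding lemmas in direct parallel. First, I would invoke \cite[Lemma 3.3.1]{CS03} to read off how the $28$ bitangent lines of a smooth plane quartic degenerate onto $C_4$: the outcome is a short list of distinct lines on $\mb{P}^2$ together with multiplicities summing to $28$, classified by which of the three singular points (the cusp $q$ and the two nodes $p_1, p_2$) they pass through and with what incidence. From this list I extract, at each singular point, the total multiplicity of lines passing through it, and, at the cusp, whether any such line is tangent to the cuspidal direction, since those are the only data entering the log discrepancy computations below.

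Next, I would build the minimal log resolution of $(\mb{P}^2, \frac{1}{2}C_4 + c\sum L_i)$ one singular point at a time. At each node $p_i$, one blow-up (or two, if some line is tangent to a branch of $C_4$ there) produces exceptional divisor(s) whose log discrepancies take the form $1 - mc$ or $\tfrac{3}{2} - m'c$, in exact analogy with the computations in the three-node lemma and the node-plus-two-cusps lemma. At the cusp $q$, three successive blow-ups produce exceptional divisors $E, F, G$ of self-intersections $-3,-2,-1$, with log discrepancies $1 - m_E c$, $\tfrac{3}{2} - m_F c$, and $2 - m_G c$ respectively, computed exactly as in the three-cusp and node-plus-two-cusps lemmas (with $m_E = m_F$ equal to the total multiplicity of lines through $q$ and $m_G$ twice that quantity plus a correction if any line is tangent to the cuspidal direction). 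Since the total multiplicity is $28$, in every case the coefficients of $c$ are bounded well within the range permitted by $c < 1/28$, and each log discrepancy is positive.

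Finally, as in the previous lemmas, every coefficient of $\frac{1}{2}C_4 + c\sum L_i$ is strictly less than $1$ for $c$ in this range, and the three local log resolutions glue to a global log resolution, proving that the pair is log canonical. The only genuinely delicate step is the bookkeeping of multiplicities and tangencies at the cusp; in particular, a line tangent to the cuspidal direction contributes extra to the coefficient of $G$ and requires a slightly sharper check. However, the pattern of the preceding three lemmas (especially the mirror configuration of one node and two cusps) indicates that the total multiplicities at each singularity remain comfortably below the bound needed for positivity, so this routine case-check is expected to go through without incident.
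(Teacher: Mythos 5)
Your proposal follows exactly the paper's argument: read off the degeneration of the $28$ bitangents from \cite[Lemma 3.3.1]{CS03} (here: $18$ lines, at most $6$ tangential, through the cusp; $12$ lines, at most $2$ tangential, through each node), resolve each singular point by the same sequence of blow-ups, and verify positivity of the log discrepancies $1-18c$, $\tfrac{3}{2}-24c$, $2-42c$ at the cusp and $1-12c$, $\tfrac{3}{2}-14c$ at the nodes for $c<\tfrac{1}{28}$. The only slight imprecision is that the tangential lines already contribute extra multiplicity to the middle exceptional divisor $F$ (so $m_F=24\neq m_E=18$), not only to $G$, but this does not affect the positivity check.
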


\begin{proof}
By \cite[Lemma 3.3.1]{CS03}, we see that the 28 lines degenerate to 2 bitangent lines of multiplicity 1 not passing through the singularities, 2 tangent lines of multiplicity 3 passing through exactly the cusp, 2 tangent lines of multiplicity 2 passing through exactly one node, 1 line of multiplicity 4 containing two nodes, and 2 lines of multiplicity 6 passing through one node and one cusp. 

Thus for the cusp $p$, there are 18 lines passing through it, and at most 6 of them are tangential to it. Resolving $p$ by blowing up three times, one gets three exceptional divisors $E,F,G$ with self-intersection $-3,-2,-1$ respectively. We have $A(E)=1-18c>0$, $A(F)\geq \frac{3}{2}-24c>0$, and $A(G)\geq 2-42c>0$ for $0<c<1/28$. 

For each node $q$, there are 12 lines passing through it, and at most 2 lines tangential to it. One can blow up at $p$ twice to get a log resolution. We have $A(E')=1-12c>0$ and $A(F')\geq \frac{3}{2}-14c>0$ for $0<c<1/28$, where $E'$ and $F'$ are the two exceptional divisors over $p$ with self-intersection $-2$ and $-1$ respectively. The singularities at other points are milder, thus $(\mb{P}^2,\frac{1}{2}C_4+c\sum L_i)$ is log canonical as desired.

\end{proof}

The other pairs with $C_4$ irreducible have milder singularity than the four types in the above four lemmas. Applying the same argument to all these pairs, we conclude the following.

\begin{proposition}
Let $C_4$ be a GIT-semistable plane quartic which is not a double conic. Then the pair $(\mb{P}^2,\frac{1}{2}C_4+c\sum L_i)$ is log canonical for $0<c<\frac{1}{28}$. In particular, these pairs are K-semistable for $0<c<1/28$.
\end{proposition}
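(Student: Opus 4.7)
The plan is to reduce the proposition to a finite case check drawn from the classification of GIT-semistable plane quartics in \cite[Section 3.4 and Table 3.2]{CS03}. The preceding lemmas have already treated the two polystable strata with infinite stabilizers (cateye and ox), the reducible quartics with finite stabilizers (via the four general lines model, which dominates the other reducible configurations in singularity complexity), and the four most singular irreducible types (three nodes, three cusps, one node with two cusps, and two nodes with one cusp). What remains is the verification of log canonicity of $(\mb{P}^2,\tfrac{1}{2}C_4+c\sum L_i)$ at the boundary value $c=\tfrac{1}{28}$ for the irreducible semistable quartics in the Cukierman--Sessum classification with strictly milder singularities, namely those with one tacnode, one or two nodes, one or two cusps, or one node and one cusp.

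For each such $C_4$, I would run the same routine as in the preceding lemmas: take the minimal log resolution $\pi:Y\to \mb{P}^2$ of the configuration $C_4+\sum L_i$, read off from the relevant entry of \cite[Table 3.2]{CS03} the multiplicities of the bitangents together with their orders of tangency with $C_4$ at each singular point, and for each exceptional divisor $E$ over a singularity of $C_4$ compute
\[
A_{(\mb{P}^2,\tfrac{1}{2}C_4+c\sum L_i)}(E)=1+\ord_E(K_{Y/\mb{P}^2})-\tfrac{1}{2}\ord_E(\pi^{*}C_4)-c\cdot\ord_E\bigl(\pi^{*}\textstyle\sum L_i\bigr).
\]
Each such quantity is affine linear in $c$; its value at $c=\tfrac{1}{28}$ must be shown to be non-negative. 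Since the remaining strata specialise (inside a family in the appropriate Hilbert scheme, with the bitangent configuration specialising accordingly) to one of the boundary cases already treated, the coefficients of $c$ in the above expression are dominated by those already verified, and so positivity at $c=\tfrac{1}{28}$ follows from the analogous positive computation in the worse case. The coefficients of $\tfrac{1}{2}C_4+\tfrac{1}{28}\sum L_i$ itself are all at most $1$ since the largest bitangent multiplicity appearing in the classification is bounded by $28$, so the coefficient condition for log canonicity is automatic and only the log discrepancies over the singularities need to be checked.

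Once log canonicity at $c=\tfrac{1}{28}$ is in place, K-semistability for $c\in(0,\tfrac{1}{28})$ follows by interpolation: the pair $(\mb{P}^2,\tfrac{1}{2}C_4)$ is a K-semistable log Fano pair by the equivalence with GIT-semistability of plane quartics (implicit in \cite{OSS16} and \cite[Section 6]{ADL19}), and applying the general interpolation principle \cite[Proposition 2.13]{ADL19} to the effective divisor $\sum L_i$ against this K-semistable pair gives the desired K-semistability. The class $-K_{\mb{P}^2}-\tfrac{1}{2}C_4-c\sum L_i\sim (1-28c)H$ is ample for $c<\tfrac{1}{28}$, so the interpolation range is exactly the log Fano range.

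The anticipated obstacle is purely bookkeeping: for each of the remaining strata one has to match the bitangent multiplicities in \cite[Table 3.2]{CS03} with the local analytic type of each singular point of $C_4$ and then verify that after a minimal log resolution (possibly several blow-ups for cusps) every exceptional log discrepancy is non-negative at $c=\tfrac{1}{28}$. Conceptually this is no different from the irreducible cases already carried out, but the bound $\tfrac{1}{28}$ is tight, so the computations must be done for each stratum rather than by a uniform estimate.
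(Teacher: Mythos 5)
Your proposal follows essentially the same route as the paper: the proposition is deduced by combining the preceding stratum-by-stratum log-resolution computations at $c=\tfrac{1}{28}$ from the Caporaso--Sernesi classification (with the remaining, milder strata handled by the same discrepancy computation or by specialization to the worse cases already verified) with interpolation of K-semistability from the K-semistable pair at $c=0$ to the log canonical log Calabi--Yau pair at $c=\tfrac{1}{28}$. The only cosmetic difference is that you interpolate directly on $\mathbb{P}^2$ using the K-semistable pair $(\mathbb{P}^2,\tfrac{1}{2}C_4)$, whereas the paper runs the interpolation on the degree-two del Pezzo double cover and transfers log canonicity to the plane pair via [KM98, Proposition 5.20]; the two formulations are equivalent under the cyclic cover correspondence.
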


\subsection{Singularities of curves in $\mb{P}(1,1,4)$}

We first point out an explicit degeneration of $\mb{P}^2$ to $\mb{P}(1,1,4)$ in $\mb{P}(1,1,1,2)$. Let $(x_0:x_1:x_2:x_3)$ be the homogeneous coordinate of $\mb{P}(1,1,1,2)$ and then consider the the hypersurface $$\mtf{X}:=\left\{(t,(x_0:x_1:x_2:x_3)): t\cdot x_3=x_0x_2-x_1^2\right\}\subseteq\mb{A}^1\times \mb{P}(1,1,1,2).$$ The natural projection $\pi:\mtf{X}\rightarrow \mb{A}^1$ gives a special degeneration of $\mb{P}^2$ to $\mb{P}(1,1,4)$.

Recall that an octic binary form $f_8(x,y)$ is GIT-semistable if and only if each of its zeros has multiplicity at most $4$ (cf. \cite{MFK94} Section 4.1). Also notice that if the point $(x:y:z)$ is an singularity of the curve $C=\{z^2=f_8(x,y)\}$, then $z=0$ and $(x:y)$ is a multiple root of $f_8(x,y)$. 

Let $p_i$ be the intersection points of $C$ and $\{z=0\}$, where $i=1,...,8$. Another important observation is that a bitangent line of a plane quartic degenerate under $\pi$ to two rulings of $\mb{P}(1,1,4)$, each of which passes through some $p_i$, and there are $\binom{8}{2}=28$ pairs in total. Thus we only need to focus on the vertex $p$ of $\mb{P}(1,1,4)$ and the singularities of $C$.

\begin{prop}
The pair $(\mb{P}(1,1,4),\frac{1}{2}C+c\sum L_i)$ is log canonical for $0<c<\frac{1}{28}$.
\end{prop}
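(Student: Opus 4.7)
The plan is to mirror the cubic and plane quartic analyses, the new feature being the $\tfrac{1}{4}(1,1)$ quotient singularity at the vertex $p=(0{:}0{:}1)$ of $\mathbb{P}(1,1,4)$. The curve $C=\{z^2=f_8\}$ avoids $p$ (since $f_8(0,0)=0$), and on the complement of $\{p, p_1, \dots, p_8\}$ the surface is smooth, $C$ is smooth, the rulings meet other boundary components only at these distinguished points, and each boundary coefficient is bounded by $\tfrac{1}{2}$ or by $7m\cdot c \leq 7\cdot 4\cdot c < 1$ when $c<\tfrac{1}{28}$. So the pair is log smooth with coefficients less than $1$ off these finitely many points, and log canonicity reduces to a local analysis at $p$ and at each $p_i$.

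At the vertex, the minimal resolution $\pi\colon Y\to X$ produces a single exceptional $E\cong\mathbb{P}^1$ with $E^2=-4$ and log discrepancy $A_X(E)=\tfrac{1}{2}$. Since $4R$ is Cartier on $\mathbb{P}(1,1,4)$ for every ruling $R$ through $p$, the identity $\pi^*(4R)\cdot E=0$ combined with $\tilde R\cdot E=1$ forces $\pi^*R=\tilde R+\tfrac{1}{4}E$. A ruling through a distinct base point of multiplicity $m$ (as a root of $f_8$) appears in $\sum L_i$ with multiplicity $7m$: one gets $m(8-m)$ copies from cross-pairs with other indices and $2\binom{m}{2}$ from internal pairs, summing to $7m$. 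Hence the coefficient of $E$ in $\pi^*\sum L_i$ is $\tfrac{1}{4}\sum_i 7m_i = 14$, and therefore
\[
A_{(\mathbb{P}(1,1,4),\,\tfrac{1}{2}C+c\sum L_i)}(E) = \tfrac{1}{2} - 14c,
\]
which is nonnegative precisely when $c \leq \tfrac{1}{28}$.

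At each $p_i$ I would split into cases according to the multiplicity $m\in\{1,2,3,4\}$, bounded above by $4$ by GIT-semistability of $f_8$. In local coordinates $C\colon z^2=(y-\alpha_i)^m g$ with $g(\alpha_i)\neq 0$, $R\colon y=\alpha_i$, and $R\cdot C=2$ at $p_i$. For $m=1$ two blow-ups resolve the tangency and the resulting triple point, with log discrepancies $\tfrac{3}{2}-7c$ and $2-14c$. For $m=2$ the node of $C$ is resolved by one blow-up which simultaneously separates the three proper transforms, giving $A(E)=1-14c$. For $m=3$ (cusp) three blow-ups are needed: the proper transform of $C$ becomes tangent to the first exceptional divisor, blowing up that tangency creates a triple point of $\tilde E_1, F_1, \tilde C$, and blowing up that point produces a log resolution with discrepancies $1-21c$, $\tfrac{3}{2}-21c$, $2-42c$. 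For $m=4$ (tacnode) two blow-ups suffice: after the first, $\tilde C$ acquires an ordinary node on the exceptional, which the second blow-up resolves; both log discrepancies then equal $1-28c$. All are nonnegative for $c\leq\tfrac{1}{28}$ and strictly positive for $c<\tfrac{1}{28}$.

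The main obstacle is the bookkeeping in cases $m=3$ and $m=4$: one has to track which affine chart the proper transform of the ruling lives in after the first blow-up, in order to verify that it is disjoint from the remaining blow-up centers; and one has to correctly propagate the multiplicity of the (total and proper) transforms of $C$ through the sequence of point blow-ups. That the vertex estimate and the $m=4$ estimate both bottom out at exactly $c=\tfrac{1}{28}$ is a useful sanity check: both compute an exceptional divisor with log discrepancy $0$ at the threshold, confirming that the strict inequality in the proposition is sharp.
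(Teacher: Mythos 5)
Your proof is correct and follows essentially the same route as the paper: reduce to local computations at the vertex and at the points $p_i$, where the vertex blow-up gives log discrepancy $\tfrac12-14c$ (via $\pi^*R=\tilde R+\tfrac14E$ and the total ruling multiplicity $56$), and the worst curve singularity (the tacnode, ruling multiplicity $28$) gives exceptional log discrepancies $1-28c$. You supply more detail than the paper (the $7m$ multiplicity count and the explicit $m=1,2,3$ resolutions), but the decomposition and the key numerical checks are the same.
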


\begin{proof}
Notice that $C$ does not pass through the vertex $p$. Let $E$ be the exceptional divisor of $\Bl_{p}\mb{P}(1,1,4)\rightarrow \mb{P}(1,1,4)$. Then $A(E)=\frac{1}{2}-14c>0$ for $0<c<\frac{1}{28}$.

Suppose that $p_i=(x:y:0)$ is a singular point of $C$. We may assume that $x=0$ and $y=1$, and locally the equation of $C$ is $z^2=x^t$, where $t\in\{2,3,4\}$. When $t=4$, $p_i$ is a tacnode, and there are 28 lines passing through it. Let $F$ and $G$ be the exceptional divisors of the minimal resolution of $p_i$ with self-intersection $-2$ and $-1$ respectively. Then $A(F)=1-28c>0$ and $A(G)=1-28c>0$ for $0<c<\frac{1}{28}$. This is also true for the cases $t=2$ and $t=3$. 

\end{proof}

To sum up, we conclude the following by the same argument as in Corollary \ref{3}.

\begin{theorem}\label{2}
Let $\ove{M}^K_2$ be the K-moduli space of degree $2$ del Pezzo surfaces. Then there are no walls for the K-moduli stacks $\mtc{M}^K_{2,c}$, and there is an isomorphism $$\ove{M}^K_{2,c}\simeq \ove{M}^K_2$$ for any $0<c<\frac{1}{28}$.  

\end{theorem}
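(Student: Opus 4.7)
The plan is to mirror closely the argument used for Corollary \ref{3} in the cubic case, using the correspondence between K-stability of a degree $2$ del Pezzo surface and K-stability of the associated log Fano pair on $\mathbb{P}^2$ or $\mathbb{P}(1,1,4)$. Recall that a K-polystable degeneration of smooth degree $2$ del Pezzo surfaces is either a double cover of $\mathbb{P}^2$ branched along a GIT-semistable quartic $C_4$, or a double cover of $\mathbb{P}(1,1,4)$ branched along a curve $z^2 = f_8(x,y)$ with GIT-semistable binary octic $f_8$. Via the explicit degeneration $\mathfrak{X}\subseteq\mathbb{A}^1\times\mathbb{P}(1,1,1,2)$ described in Section~4.4, both kinds of surfaces fit into a single $\mathbb{Q}$-Gorenstein flat family, and the $56$ lines on a smooth $X$ degenerate canonically (with multiplicities) to lines on the limit, giving a well-defined divisor $\sum E_i$. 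By \cite[Proposition 5.20]{KM98}, log canonicity of $(X,c\sum E_i)$ is equivalent to log canonicity of $(\mathbb{P}^2,\tfrac{1}{2}C_4+c\sum L_i)$ (respectively $(\mathbb{P}(1,1,4),\tfrac{1}{2}C+c\sum L_i)$), where the $L_i$'s are the degenerations of the $28$ bitangent lines.

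First I would collect the log canonicity at the endpoint $c=\tfrac{1}{28}$. The case analysis has already been carried out in Section~4: the infinite-stabilizer cases (cateye and ox) are handled in Subsection~4.1; reducible quartics with finite stabilizers in Subsection~4.2; irreducible plane quartics with at worst $A_1$ or $A_2$ singularities in Subsection~4.3; and the $\mathbb{P}(1,1,4)$ boundary stratum in Subsection~4.4. Together these exhaust all GIT-polystable branch configurations appearing in $\overline{M}^K_2$. Hence for every K-polystable $X\in\overline{M}^K_2$, the pair $(X,\tfrac{1}{28}\sum E_i)$ is log canonical.

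Next, I would apply interpolation (Theorem~\ref{1}): since $\sum E_i\sim -28K_X$, the log canonicity at $c=\tfrac{1}{28}$ implies that $(X,c\sum E_i)$ is K-semistable for every $c\in(0,\tfrac{1}{28})$. This yields a $\mathbb{Q}$-Gorenstein family $(\mathcal{X},c\mathcal{D})\to\mathcal{M}^K_2$ with K-semistable fibers, producing a forgetful morphism of stacks $\mathcal{M}^K_2\to\mathcal{M}^K_2(c)$ whose image lands in $\mathcal{M}^K_{2,c}$.

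Finally, I would run the forgetful-map argument exactly as in Proposition~\ref{14} and Proposition~\ref{15}. Given any one-parameter family of K-semistable pairs $(\mathcal{X}^{\circ},c\mathcal{D}^{\circ})\to T\setminus\{0\}$ of smooth quartic del Pezzo surfaces with sum of lines, the central fiber $\mathcal{X}_0$ is an ADE degeneration, and the scheme-theoretic closure of $\mathcal{D}^{\circ}$ on the smooth locus extended to $\mathcal{X}_0$ as a Weil divisor provides the unique filling $\mathcal{D}_0$; the canonical line degeneration shows this filling is K-semistable, giving properness of the forgetful map $\varphi:\mathcal{M}^K_{2,c}\to\mathcal{M}^K_2$. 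Since each limit surface carries only finitely many lines (with multiplicities), $\varphi$ is also quasi-finite, hence finite, and descends to a finite birational morphism $\psi:\overline{M}^K_{2,c}\to\overline{M}^K_2$. Normality of $\overline{M}^K_2$ (from its description as a weighted blow-up of a GIT quotient, cf.~\cite{ADL19,ADL21}) and Zariski's Main Theorem then force $\psi$ to be an isomorphism. The absence of walls is then automatic from Theorem~\ref{12}: since $\overline{M}^K_{2,c}\simeq\overline{M}^K_2$ for all $c\in(0,\tfrac{1}{28})$, no wall-crossing can occur. The main conceptual obstacle has already been absorbed into Section~4, namely verifying log canonicity at $c=\tfrac{1}{28}$ across the $\mathbb{P}^2$ and $\mathbb{P}(1,1,4)$ strata simultaneously; everything else is formal repetition of the cubic/quartic arguments.
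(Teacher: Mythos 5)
Your proposal is correct and takes essentially the same route as the paper, which deduces Theorem \ref{2} from the Section 4 log canonicity lemmas ``by the same argument as in Corollary \ref{3}'': verify log canonicity at $c=\tfrac{1}{28}$ on both the $\mathbb{P}^2$ and $\mathbb{P}(1,1,4)$ strata, apply interpolation, and then run the properness/quasi-finiteness of the forgetful map plus Zariski's Main Theorem. The only slip is cosmetic: in the properness step the one-parameter families consist of smooth degree $2$ del Pezzo surfaces (double covers of $\mathbb{P}^2$ branched along quartic curves), not ``quartic del Pezzo surfaces.''
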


\section{Discussion on degree one case}

In this section, we display some examples to show that there exist some walls for $M^K_{1,c}$ when $c$ varies from $0$ to $\frac{1}{240}$. The following result describes the K-polystable del Pezzo surfaces with at worst ADE singularities. For the analytic description of the K-moduli space $M^K_{1}$, see \cite[Section 5]{OSS16}.

\begin{prop} \textup{(cf. \cite[Section 6.1.2]{OSS16})}
A nodal del Pezzo surface $X$ of degree $1$ is K-polystable if and only if it has either only $A_k$-singularities with $k\leq 7$, or exactly two $D_4$-singularities, and $X$ is not isomorphic to one of the surfaces parameterized by $(a_1:a_2)\in\mb{P}^1$, which are hypersurfaces in $\mb{P}(1,1,2,3)$ defined by the equations $$w^2=a_1z^3+z^2x^2+zy^4+a_2z^2y^2.$$
\end{prop}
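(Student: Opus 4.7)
The plan is to prove both directions by combining the local-global volume comparison with the explicit classification of nodal degree-one del Pezzo surfaces as sextic hypersurfaces in $\mb{P}(1,1,2,3)$.

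For the ``only if'' direction, I would first apply Theorem \ref{6} to obtain the pointwise bound $\widehat{\vol}(x, X) \geq (2/3)^2 (-K_X)^2 = 4/9$ at every singular point $x\in X$ of any K-semistable degree-one del Pezzo $X$. Combined with the standard computations of normalized volumes of surface ADE singularities ($\widehat{\vol}(A_k) = 4/(k+1)$, $\widehat{\vol}(D_n) = 4/(n-2)$, together with the known values for $E_6, E_7, E_8$), this a priori restricts the local singularity types to $A_k$ with $k\le 8$, small $D_n$, and the $E_n$'s. Since a nodal degree-one del Pezzo surface is a crepant contraction of $(-2)$-curves on an almost general blow-up of $\mb{P}^2$, the Dynkin diagram of its singular points must embed into $E_8$; this cuts the admissible global configurations down to a short combinatorial list.

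Next I would promote K-semistability to K-polystability case by case by exhibiting explicit test configurations for the non-permitted cases. The key idea is that an $A_8$ singularity (and similarly every $D_n$ with $n\ge 5$ other than the symmetric two-$D_4$ case, and every surface containing an $E_n$) arises as the specialization of a more symmetric singular model under a one-parameter subgroup of $\Aut(\mb{P}(1,1,2,3))$ acting on the coefficients of the defining sextic. Lifting this action produces a nontrivial $\mb{G}_m$-equivariant test configuration whose central fibre is a more special K-semistable surface, witnessing the failure of K-polystability. The same mechanism handles the one-parameter family $w^2=a_1 z^3 + z^2 x^2 + z y^4 + a_2 z^2 y^2$: an explicit torus action in $\Aut(\mb{P}(1,1,2,3))$ degenerates every member to a common K-polystable limit, so the whole family consists of strictly K-semistable surfaces.

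For the ``if'' direction, I would verify K-polystability of the remaining surfaces. The case of exactly two $D_4$-singularities admits a positive-dimensional torus of automorphisms (visible from the defining sextic), so K-polystability follows as soon as K-semistability is established via Theorem \ref{4} applied to a $\mb{Q}$-Gorenstein smoothing. For surfaces with only $A_k$-singularities ($k\le 7$) outside the excluded family, one combines a direct $\beta$-invariant computation along the natural toric valuations with the openness and properness of the K-moduli stack to rule out destabilizing test configurations. The main obstacle will be the $A_7$ case: showing that the family $(a_1:a_2)$ is \emph{precisely} the strictly K-semistable locus among $A_7$ degenerations, and that every other $A_7$ surface is genuinely K-polystable. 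This requires an enumeration of the $\Aut(\mb{P}(1,1,2,3))$-stabilizers of the defining sextic and a check that, outside the listed family, no $\mb{G}_m$-action preserves the equation and moves it within the moduli.
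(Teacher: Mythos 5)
The paper itself offers no proof of this proposition: it is quoted directly from \cite[Section 6.1.2]{OSS16}, where it is established via the K\"ahler--Einstein/Gromov--Hausdorff moduli continuity method, so your sketch is an attempt to reprove a cited theorem. Within the sketch there are concrete gaps, starting with the local step: for a du Val singularity $\mb{A}^2/G$ the normalized volume is $4/|G|$, so $\wh{\vol}(A_k)=4/(k+1)$ but $\wh{\vol}(D_n)=1/(n-2)$ (the binary dihedral group has order $4(n-2)$), not $4/(n-2)$, and the $E_n$'s have $|G|\geq 24$. With the correct values, Theorem \ref{6} (which forces $\wh{\vol}(x,X)\geq 4/9$) already eliminates all $D_n$ with $n\geq 5$ and all $E_n$, so no test configurations are needed there; but it cannot separate $A_7$ from $A_8$ (the $A_8$ case sits exactly on the boundary $|G|=9$), it does not force the ``exactly two $D_4$'' configuration (a single $D_4$ together with $A_k$'s survives the bound), and it says nothing about the family $w^2=a_1z^3+z^2x^2+zy^4+a_2z^2y^2$, whose members have allowed local singularity types. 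Thus the entire content of the proposition is deferred to your later case-by-case steps.

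Those steps, as written, do not go through. For the two-$D_4$ surfaces you claim that a positive-dimensional automorphism torus upgrades K-semistability to K-polystability; it does not --- strictly K-semistable, non-polystable Fanos typically have large automorphism groups --- and the correct route is an equivariant $\beta$/Futaki computation on the complexity-one $T$-surface (compare Proposition \ref{9} of this paper for the pair version). You also invoke Theorem \ref{4} backwards: it deduces K-semistability of a nearby fiber from that of the degenerate fiber, not K-semistability of the two-$D_4$ surface from its smoothing. For the surfaces with only $A_k$-singularities, $k\leq 7$, these are in general non-toric with finite automorphism group, so there are no ``natural toric valuations,'' and checking $\beta\geq 0$ on finitely many chosen divisors proves nothing; one needs all divisors over $X$ or entirely different machinery (in \cite{OSS16} this is precisely the analytic moduli continuity argument). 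Finally, to show that a non-permitted surface (an $A_8$ surface, or a member of the listed $(a_1:a_2)$ family) fails K-polystability, exhibiting some $\mb{G}_m$-degeneration to ``a more special K-semistable surface'' is not sufficient: you must either produce a destabilizing test configuration with negative Futaki invariant, or identify a non-isomorphic central fiber, verify the Futaki invariant vanishes, and prove that central fiber is K-polystable; none of these computations is indicated. So the proposal is a reasonable outline of the easy necessary conditions, but it has genuine gaps exactly at the statements the proposition actually asserts.
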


\subsection{Degree one del Pezzo with an $A_7$-singularity}

We aim to prove the following statement.

\begin{prop}\label{8}
    Let $Y$ be a del Pezzo surface of degree one with an $A_7$-singularity, and $L_i$'s be the degeneration of the 240 lines. Then the pair $(Y,c\sum L_i)$ is K-unstable for $c\geq \frac{1}{288}.$
\end{prop}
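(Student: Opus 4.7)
The plan is to exhibit a divisorial valuation $E$ over $Y$ centered at the $A_7$-singular point $p$ such that
\[
A_Y(E) \;\le\; \tfrac{1}{288}\cdot\ord_E\bigl(\textstyle\sum L_i\bigr).
\]
Since a K-semistable log Fano pair is automatically klt, and since $\sum L_i\sim_{\mathbb{Q}}-240K_Y$ makes $(Y,c\sum L_i)$ genuinely log Fano for every $c\in(0,\tfrac{1}{240})$, such an $E$ forces $A_{(Y,c\sum L_i)}(E)\le 0$ for all $c\ge\tfrac{1}{288}$ in this range, so the pair fails klt and is therefore K-unstable throughout.

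First I would fix an explicit model for $Y$: for example, as a suitable degeneration of the blow-up of $\mathbb{P}^2$ at eight points in almost general position, or as a weighted-projective Weierstrass hypersurface in $\mathbb{P}(1,1,2,3)$ arranged to acquire an $A_7$-singularity. Let $\pi\colon\widetilde Y\to Y$ denote the minimal resolution; its exceptional locus is the $A_7$-chain of $(-2)$-curves $E_1,\ldots,E_7$ (with $E_j^2=-2$, $E_j\cdot E_{j+1}=1$, and the rest zero). Because the $A_7$-singularity is canonical, $K_{\widetilde Y}=\pi^*K_Y$ and $A_Y(E_k)=1$ for every $k$, so the goal reduces to finding some index $k$ with $\ord_{E_k}(\sum L_i)\ge 288$.

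Next I would compute these multiplicities from the numerical equivalence $\sum L_i\sim_{\mathbb{Q}}-240K_Y$. Writing
\[
\pi^*\!\left(\sum L_i\right) \;=\; \sum L_i^{\mathrm{str}} \;+\; \sum_{k=1}^{7} a_k E_k
\]
and using $\pi^*(\sum L_i)\cdot E_j = -240\,K_{\widetilde Y}\cdot E_j = 0$, one obtains the linear system
\[
\sum_{k=1}^{7} (E_j\cdot E_k)\,a_k \;=\; -v_j, \qquad v_j \;:=\; E_j\cdot\!\sum L_i^{\mathrm{str}}.
\]
Inverting the (negative of the) $A_7$ Cartan matrix yields the closed form
\[
a_k \;=\; \sum_{j=1}^{7}\, \frac{\min(j,k)\,\bigl(8-\max(j,k)\bigr)}{8}\,v_j.
\]
The non-negative integers $v_j$ count, with multiplicity, the lines of $\sum L_i$ passing through $p$ whose strict transform meets $E_j$; they are to be read off from an explicit one-parameter smoothing of $Y$ (inside a versal deformation or an ambient weighted projective space).

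The main obstacle is precisely this last combinatorial/geometric step: tracing how the $240$ lines collide at the $A_7$-singularity to obtain each $v_j$. Because the $A_7$-chain is symmetric, I would expect the $v_j$'s to be symmetric about $j=4$ and the largest coefficient $a_k$ to occur at the central exceptional $E_4$; note that the middle row $(\tfrac12,1,\tfrac32,2,\tfrac32,1,\tfrac12)$ of the inverse Cartan matrix sums to $8$, so an average value of $36$ for the $v_j$'s already produces $a_4=288$. Once $a_4\ge 288$ is verified, the klt-failure argument of the first paragraph yields K-unstability of $(Y,c\sum L_i)$ for every $c\ge\tfrac{1}{288}$. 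If no $E_k$ from the minimal resolution suffices, the same argument can be carried out for a weighted blow-up of $(p\in Y)$, whose log discrepancy is computed directly from the weights of the $A_7$-quotient description and whose multiplicity in $\sum L_i$ is an analogous weighted sum of the $v_j$'s.
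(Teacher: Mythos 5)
Your overall strategy is the same as the paper's: realize that the $A_7$-point is a canonical (Du Val) singularity, so every exceptional curve $E_k$ of the minimal resolution has $A_Y(E_k)=1$, and then K-unstability for $c\ge\frac{1}{288}$ follows once one exhibits some $E_k$ with $\ord_{E_k}\bigl(\sum L_i\bigr)\ge 288$ (then $A_{(Y,c\sum L_i)}(E_k)\le 0$ while $S>0$, so $\beta<0$). Your reduction via $\pi^*(\sum L_i)\cdot E_j=0$ and the inverse $A_7$ Cartan matrix is also correct, and in fact consistent with the paper: with the paper's incidence data one gets $v=(64,28,56,0,56,28,64)$ and $a_3=a_4=a_5=288$, matching the paper's computation $A(E_3)=A(E_4)=A(E_5)=1-288c$.

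The gap is that you never establish the incidence data $v_j$, i.e.\ how the $240$ lines degenerate at the $A_7$-point, and this is precisely the substance of the proposition. You say the $v_j$ are ``to be read off from an explicit one-parameter smoothing'' and then argue only heuristically (``I would expect symmetry'', ``an average value of $36$ already produces $a_4=288$''); but nothing in the proposal pins down the multiplicities of the degenerate lines or which exceptional curves their strict transforms meet, so no inequality $a_k\ge 288$ is actually proved, and the specific threshold $\frac{1}{288}$ (rather than some other number) is exactly what is at stake. The paper supplies this missing step by constructing $Y$ concretely as the ample model of the blow-up of $\mathbb{P}^2$ at eight curvilinear points supported at a single point of a smooth cubic, listing the classes of the seven types of degenerate lines on the resolution together with their multiplicities $8,28,56,56,56,28,8$, and then computing the pullbacks $\pi^*l_i$ explicitly. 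To complete your argument you would have to carry out an equivalent classification of the line classes on the resolution (or otherwise compute the $v_j$), not merely set up the linear algebra that would process such data once known.
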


\begin{proof}
Let $Y$ be a degree one del Pezzo surface with an $A_7$-singularity. Then $Y$ is obtained by blowing up $\mb{P}^2$ at eight curvilinear points on a smooth cubic curve supported at a single point and taking the ample model. Let $E_1,...,E_8$ be the eight exceptional divisor on the blow-up $\wt{Y}$ in the order of blow-up, and $H$ the class of the pull-back of $\mtc{O}_{\mb{P}^2}(1)$. Then the proper transform to $\wt{Y}$ of the 240 lines on $Y$ are of the following types:

\begin{enumerate}[(a)]
\item $E_8$, intersecting $E_7$, of multiplicity 8;
\item $H-E_1-2E_2-2E_3-\cdots-2E_8$, intersecting $E_2$, of multiplicity 28;
\item $2H-E_1-2E_2-3E_3-4E_4-5\sum_{i\geq 5} E_i$, intersecting $E_5$, of multiplicity 56;
\item $3H-2E_1-3E_2-\cdots-8E_7-8E_8$, intersecting $E_1$ and $E_7$, of multiplicity 56;
\item $4H-2E_1-4E_2-6E_3-\sum_{i\geq 4}(i+3)E_i$, intersecting $E_3$ and $E_8$, of multiplicity 56;
\item $5H-\sum_{i=1}^62iE_i-13E_7-14E_8$, intersecting $E_6$ and $E_8$, of multiplicity 28;
\item $6H-3E_1-\sum_{i\geq2} (2i+1)E_i$, intersecting $E_1$ and intersecting $E_8$ twice, of multiplicity 8;
\end{enumerate}

Denote by $L_i$ and $l_i$ the line and its image on $Y$ of one of the above types, and $a_i$ be the multiplicity of the line $L_i$, where $i=0,1,...,6$ is the coefficient of $H$ in the class. Let $\pi:\wt{Y}\rightarrow Y$ be the blow-down morphism. Then we have $$\pi^*l_0=E_8+\frac{7}{8}E_7+\frac{6}{8}E_6+\frac{5}{8}E_5+\frac{4}{8}E_4+\frac{3}{8}E_3+\frac{2}{8}E_2+\frac{1}{8}E_1,$$
$$\pi^*l_1=L_1+\frac{1}{4}E_7+\frac{2}{4}E_6+\frac{3}{4}E_5+\frac{4}{4}E_4+\frac{5}{4}E_3+\frac{6}{4}E_2+\frac{3}{4}E_1,$$
$$\pi^*l_2=L_2+\frac{5}{8}E_7+\frac{10}{8}E_6+\frac{15}{8}E_5+\frac{12}{8}E_4+\frac{9}{8}E_3+\frac{6}{8}E_2+\frac{3}{8}E_1,$$
$$\pi^*l_3=L_3+E_7+E_6+E_5+E_4+E_3+E_2+E_1,$$
$$\pi^*l_4=L_4+\frac{3}{8}E_7+\frac{6}{8}E_6+\frac{9}{8}E_5+\frac{12}{8}E_4+\frac{15}{8}E_3+\frac{10}{8}E_2+\frac{5}{8}E_1,$$
$$\pi^*l_5=L_5+\frac{3}{4}E_7+\frac{6}{4}E_6+\frac{5}{4}E_5+\frac{4}{4}E_4+\frac{3}{4}E_3+\frac{2}{4}E_2+\frac{1}{4}E_1,$$
$$\pi^*l_6=L_6+\frac{1}{8}E_7+\frac{2}{8}E_6+\frac{3}{8}E_5+\frac{4}{8}E_4+\frac{5}{8}E_3+\frac{6}{8}E_2+\frac{7}{8}E_1.$$

In particular, one gets that the log discrepancy $$A_{(Y,c\sum a_iL_i)}(E_3)=A_{(Y,c\sum a_iL_i)}(E_4)=A_{(Y,c\sum a_iL_i)}(E_5)=1-288c<0$$ when $\frac{1}{288}<c<\frac{1}{240}$. Thus there is a wall $0<c<\frac{1}{288}$ given by the degeneration of such pairs.

\end{proof}

\subsection{Degree one del Pezzo with two $D_4$-singularities}

\begin{prop}\label{9}
    Let $Z$ be a del Pezzo surface of degree one with exactly two $D_4$-singularities, and $L_i$'s be the degeneration of the 240 lines. Then the pair $(Z,c\sum L_i)$ is K-stable for any $0<c< \frac{1}{240}$.
\end{prop}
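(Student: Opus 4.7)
The proof strategy parallels that of Proposition~\ref{8}, but we expect all log discrepancy bounds to come out strictly positive rather than negative. Since $Z$ is assumed K-polystable, it is in particular a K-semistable $\mb{Q}$-Fano variety. Each of the $240$ lines has $-K_Z$-degree $1$, so $\sum L_i \sim -240 K_Z$, and Theorem~\ref{1}(1) with $r=240$ reduces the proposition to the single statement that the pair $\bigl(Z,\tfrac{1}{240}\sum L_i\bigr)$ is klt.

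To establish kltness, I would realize $Z$ as the ample model of an explicit weak del Pezzo surface $\wt{Z}\to \mb{P}^2$: by \cite[Theorem~3.4]{HW81}, $\wt{Z}$ is the blow-up of $\mb{P}^2$ at eight points in almost general position whose associated $(-2)$-configuration is $D_4\sqcup D_4$. From this model, the $240$ lines on a smoothing of $Z$ degenerate to a sum $\sum a_j L'_j$ of $(-1)$-curves on $\wt{Z}$ with well-defined multiplicities $a_j$, as in Proposition~\ref{8}. For each of the two $D_4$-singularities $p$, I would tabulate the incidence data: for each class $L'_j$ of lines on $\wt{Z}$, how it meets the four exceptional $(-2)$-curves over $p$.

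Let $\pi\colon \wt{Z}\to Z$ be the minimal resolution, and over a fixed $D_4$-singularity let $F_1,\ldots,F_4$ denote the exceptional curves, with $F_1$ the central one meeting each of $F_2,F_3,F_4$ transversely. Writing
\[
\pi^{*}\!\Bigl(\sum L_i\Bigr) \;=\; \sum_j a_j L'_j \;+\; \sum_{k=1}^{4} b_k F_k,
\]
the coefficients $b_k$ are determined by the four linear equations $\pi^{*}(\sum L_i)\cdot F_k=0$ (together with the analogous system over the second singularity). Solving the $D_4$ intersection matrix yields explicit values for $b_k$ in terms of the local incidence data. It then suffices to verify $b_k<240$ for every $k$, since this gives
\[
A_{(Z,\frac{1}{240}\sum L_i)}(F_k) \;=\; 1 - \tfrac{b_k}{240} \;>\; 0,
\]
and together with the fact that no individual line $L'_j$ appears with multiplicity $\geq 240$ in $\sum L_i$, every coefficient in the boundary lies in $[0,1)$ and every log discrepancy on the minimal resolution is strictly positive, so the pair is klt.

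The main obstacle, as in Proposition~\ref{8}, is the accurate enumeration of the $240$ lines on $Z$ together with their precise incidence with the two $D_4$-singularities; once this data is in hand, the linear algebra on each $D_4$ graph is routine. The reason we expect strict positivity here, in contrast to Proposition~\ref{8} where the long $A_7$-chain produced coefficients as large as $288$ on its central links, is that the $D_4$ configuration is combinatorially more balanced: pullback contributions are distributed across a star rather than accumulated along a chain of length seven, so no exceptional curve absorbs enough weight to push its coefficient past the threshold $240$.
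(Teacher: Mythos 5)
Your overall strategy is the same as the paper's (realize $Z$ as the ample model of an explicit weak del Pezzo $\wt{Z}\to\mb{P}^2$, determine the multiplicities with which the $240$ lines degenerate, compute pullback coefficients along the exceptional $(-2)$-curves, and conclude via interpolation), but the proposal stops exactly where the actual mathematical content begins. The paper's proof consists precisely of the data you defer: it constructs $\wt{Z}$ by a specific sequence of blow-ups of $\mb{P}^2$ (two tangent-direction blow-ups at points $p_1,p_2$ of a line aimed at an external point $p$, one ordinary blow-up at $p_3$, and a length-$3$ curvilinear blow-up at $p$), identifies the five classes to which the $240$ lines degenerate with multiplicities $24,24,64,64,64$, and from this computes the discrepancies. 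Your substitute for this computation is a heuristic (``the $D_4$ star is more balanced than the $A_7$ chain''), which is not a proof: in Proposition \ref{8} the analogous coefficient reached $288$, so nothing short of the explicit incidence data rules out a coefficient $\geq 240$ here.

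There is also a concrete error in your reduction. You reduce to showing $(Z,\tfrac{1}{240}\sum L_i)$ is klt, i.e.\ $b_k<240$ for every exceptional curve $F_k$, invoking Theorem \ref{1}(1). But the computation in the paper shows that the minimal log discrepancy of $(Z,c\sum L_i)$ equals $1-240c$; in other words some divisor over $Z$ carries coefficient exactly $240$ in $\pi^{*}(\sum L_i)$, so the threshold pair $(Z,\tfrac{1}{240}\sum L_i)$ is log canonical but \emph{not} klt, and your verification criterion would fail if you carried it out. The correct statement to prove is that the mld is $1-240c>0$ for each fixed $c<\tfrac{1}{240}$ (equivalently, lc at the threshold), which via Theorem \ref{1}(2) and the K-polystability of $Z$ from \cite[Section 6.1.2]{OSS16} is how the wall-freeness is obtained; if you insist on klt-ness at $c=\tfrac{1}{240}$ as your target, the argument cannot close. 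So the proposal needs both the actual enumeration of the degenerate lines and their incidences with the two $D_4$ points, and a repaired interpolation step.
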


\begin{proof}

Let $Z$ be a degree one del Pezzo surface with exactly $D_4$-singularities. Such surfaces are weighted hypersurfaces in $\mb{P}(1,1,2,3)$ and there is a one-parameter family paramaterizing them (cf. \cite[Example 5.19]{OSS16}). For us, a description by blowing up projective plane is more useful, since we need to figure out the degeneration of the 240 lines.

We see that $Z$ can be obtained by blowing up $\mb{P}^2$ in the following way. Fix four distinct points $p_1,p_2,p_3,p_4$ on a line $L$ in $\mb{P}^2$, and another point $p\in\mb{P}^2\setminus L$. Blow up $\mb{P}^2$ at $p_1$ along the tangent direction to $p$ with exceptional divisors $E_1,F_1$; at $p_2$ along the tangent direction to $p$ with exceptional divisors $E_2,F_2$; and at the point $p_3$ with exceptional divisors $E$. Finally blow up the length $3$ 0-dimensional curvilinear subscheme supported on the line $\ove{pp_4}$ and concentrated at $p$ with exceptional divisors $E_3,F_3,G_3$ (see Figure \ref{blow} for the blow-up procedure and Figure \ref{conf} for the configuration of the (-1)-curves and (-2)-curves on $\wt{Z}$). Denote this surface by $\wt{Z}$, and the ample model by $Z$. 

\begin{figure}
\centering 
\includegraphics[width=0.4\textwidth]{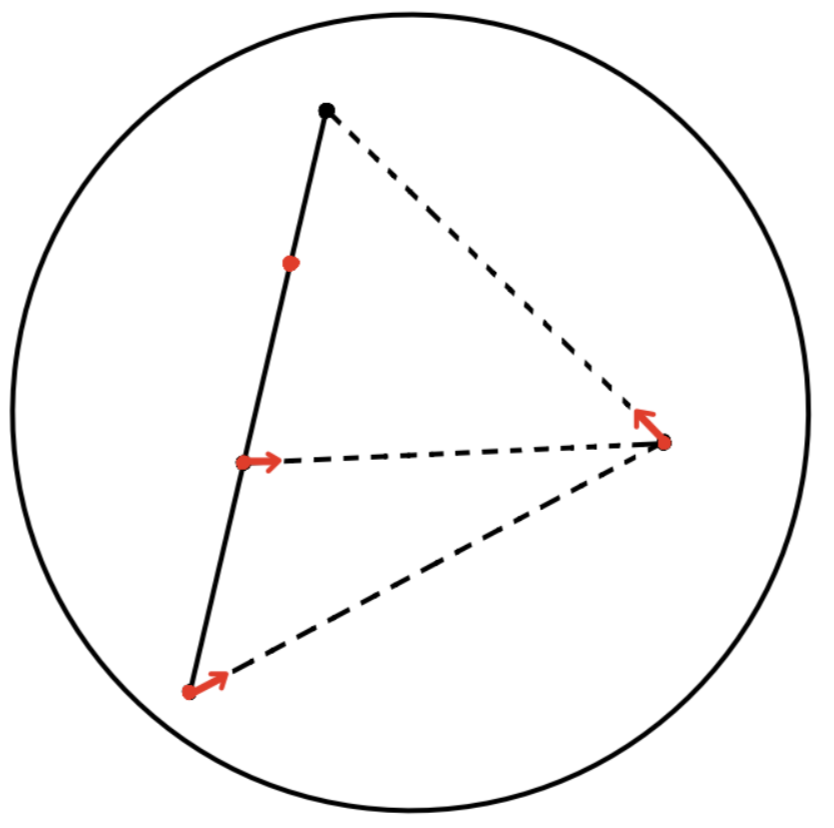} 
\caption{Blow-up of $\mb{P}^2$ to get $\wt{Z}$} 
\label{blow}
\end{figure}

\begin{figure}
\centering 
\includegraphics[width=0.8\textwidth]{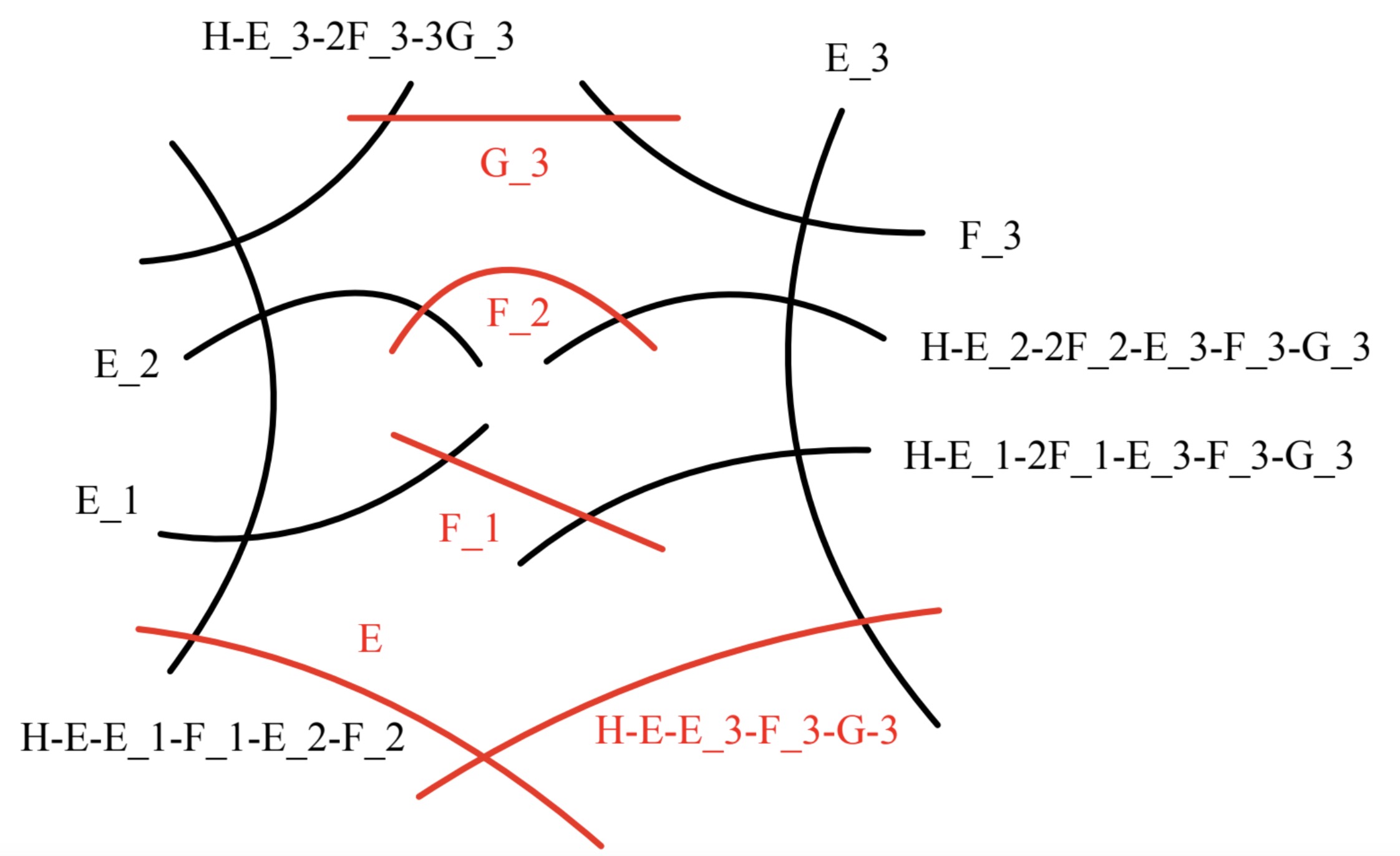} 
\caption{Configuration of the lines on $\wt{Z}$} 
\label{conf}
\end{figure}

Let $H$ be the class of the pull-back of $\mtc{O}_{\mb{P}^2}(1)$. Then the proper transform to $\wt{Z}$ of the 240 lines on $Z$ are of the following types:
\begin{enumerate}[(a)]
\item $E$, of multiplicity 24;
\item $H-E-E_3-F_3-G_3$, of multiplicity 24;
\item $F_1$, of multiplicity 64;
\item $F_2$, of multiplicity 64;
\item $G_3$, of multiplicity 64.
\end{enumerate}

Notice that the configurations of the curves in Figure \ref{conf} is symmetric: the black curves denote the (-2)-curves and the red lines denote the (-1)-curves. Using the same computation as in Section 5.1, one sees that the minimal log discrepancy of the pair $(Z,c\sum a_iL_i)$ is $1-240c>0$, when $0<c<\frac{1}{240}$. Thus the surfaces with exactly two $D_4$-singularities do not contribute any wall.

\end{proof}

\begin{remark}
\textup{Notice that the four points $p_1,...,p_4$ on $\mb{P}^1$ have a cross-ratio. This also explains why we have a one-dimensional family of surfaces with exactly two $D_4$-singularities.}
\end{remark}

\subsection{Degree one del Pezzo with an $A_8$-singularity two $\frac{1}{9}(1,2)$-singularities}

Among all K-polystable degeneration of smooth degree 1 del Pezzo surface, there is a special one $X_{\infty}$ with an $A_8$-singularity and two $\frac{1}{9}(1,2)$-singularities. The surface $X_{\infty}$ can be viewed as a degree $18$ hypersurface in $\mb{P}(1,2,9,9)$ given by the equation $z_3z_4=z_2^9$, where $(z_1:z_2:z_3:z_4)$ is the coordinates of weights $1,2,9,9$ respectively. Moreover, projecting to $\mb{P}(1,2,9)$ by $$(z_1:z_2:z_3:z_4)\mapsto (z_1:z_2:z_3+z_4)$$ realizes $X_{\infty}$ as a double cover of $\mb{P}(1,2,9)$ branched along the curve $C_{\infty}=\{v^9=w^2\}$, where $(u:v:w)$ is the coordinate of $\mb{P}(1,2,9)$. In fact, this double cover map is given by the linear system $|-2K_{X_{\infty}}|$. 

Recall that for a smooth del Pezzo surface $X_t$ of degree one, the map given by the linear system $|-2K_{X_{t}}|$ is a double cover to $\mb{P}(1,1,2)\subseteq \mb{P}^3$ branched along a sextic curve $C_t$. The 240 lines on $X_t$ are sent pairwise to the 120 conics on $\mb{P}(1,1,2)$ obtained by intersecting the $\mb{P}(1,1,2)$ with the 120 tritangent planes of $C_t$ in $\mb{P}^3$. See \cite{KRSS} for details. 

The degeneration of $\mb{P}(1,1,2)$ to $\mb{P}(1,2,9)$ can be observed in $\mb{P}^{15}$: they are embedded into $\mb{P}^{15}$ by the complete linear series $|\mtc{O}(6)|$ and $|\mtc{O}(18)|$, respectively. In particular, the 120 conics degenerate to curves in $\mb{P}(1,2,9)$ of degree 6. Notice that the pair $(\mb{P}(1,2,9),\frac{1}{2}C_{\infty})$ is a $\mb{T}$-variety of complexity one: there is a $\mb{G}_m$-action given by $$\lambda\cdot(u:v:w)=(u:\lambda^2 v:\lambda^9w).$$ In particular, each of the 120 sextic curves is $\mb{G}_m$-invariant, and hence is defined by one of the following four equations: $$u^6=0,\quad u^4v=0,\quad u^2v^2=0,\quad v^3=0.$$  However, the multiplicity of them is not clear to us. We have the following partial result.

\begin{prop}
Let $X_{\infty}$ be the degree 1 del Pezzo surface as above, and $L_i$'s be the 240 lines on it counted with multiplicities. Then the pair $(X_{\infty},c\sum L_i)$ is either K-polystable for any $0<c<\frac{1}{240}$ or K-unstable for any $0<c<\frac{1}{240}$. Moreover, it is K-polystable if and only if $$\ord_{u=0}\left(\sum l_j\right)=240\quad \textup{and}\quad \ord_{u=0}\left(\sum l_j\right)=240,$$ where $l_i$'s are the 120 sextic curves in $\mb{P}(1,2,9)$ given by the images of $L_i$'s.
\end{prop}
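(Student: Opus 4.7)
The plan is to exploit the $\bG_m$-action on $X_\infty$ lifting $\lambda\cdot(u:v:w)=(u:\lambda^2 v:\lambda^9 w)$ on $\mathbb{P}(1,2,9)$, which preserves every line $L_i$ (since each pushforward $l_j$ is one of the four $\bG_m$-invariant sextics $u^6,\,u^4v,\,u^2v^2,\,v^3$). Since $(X_\infty,0)$ is already a K-polystable point of $\ove{M}^K_1$ and the boundary $\sum L_i$ is $\bG_m$-invariant, the only candidate destabilizer of $(X_\infty,c\sum L_i)$ for $c>0$ is the product test configuration induced by this one-parameter subgroup; non-product destabilizers are ruled out by openness of K-semistability applied to the $\bG_m$-equivariant deformation at $c=0$ (cf.\ Theorem~\ref{4}).

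The first step is to analyze the generalized Futaki invariant along this $\bG_m$. Because it is linear in the boundary coefficient,
$$\Fut\bigl((X_\infty, c\textstyle\sum L_i);\bG_m\bigr) \;=\; \Fut\bigl((X_\infty, 0);\bG_m\bigr)\;+\;c\cdot\alpha \;=\; c\cdot\alpha,$$
where the first term vanishes by K-polystability of $X_\infty$ and $\alpha$ depends only on the $\bG_m$-weight distribution of $\sum L_i$. If $\alpha\neq 0$, then after replacing the generator of $\bG_m$ by its inverse if necessary, the product test configuration has strictly negative Futaki invariant for every $c\in(0,1/240)$, so the pair is K-unstable throughout the range. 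If $\alpha=0$, the Futaki vanishes identically, and combining this with K-semistability (see the last paragraph) yields K-polystability throughout the range by equivariant K-stability on the complexity-one $T$-variety $X_\infty$. This establishes the dichotomy in the first part of the statement.

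The second step is to identify the vanishing of $\alpha$ with the order condition. Pushing the computation down through the double cover $X_\infty\to\mathbb{P}(1,2,9)$, each $l_j$ is supported on the two $\bG_m$-invariant curves $\{u=0\}$ and $\{v=0\}$ (the third coordinate curve $\{w=0\}$ does not appear), with respective $\bG_m$-weights $0$ and $2$. Hence $\alpha$ is a $\mathbb{Q}$-linear combination of the two multiplicities $\ord_{u=0}(\sum l_j)$ and $\ord_{v=0}(\sum l_j)$. Using that each $l_j$ has degree $6$ in $\mathbb{P}(1,2,9)$ and that there are $120$ of them one obtains the identity
$$\ord_{u=0}\bigl(\textstyle\sum l_j\bigr) \;+\; 2\,\ord_{v=0}\bigl(\textstyle\sum l_j\bigr) \;=\; 720,$$
so the two order equalities $\ord_{u=0}(\sum l_j)=240$ and $\ord_{v=0}(\sum l_j)=240$ are equivalent; the Futaki computation then pins down $\alpha=0$ as being equivalent to this single numerical balance.

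The step I expect to be most delicate is the explicit Futaki computation on the singular surface $X_\infty$: because of the $A_8$- and the two $\frac{1}{9}(1,2)$-singularities, and because $\sum L_i$ may acquire high multiplicities along the two $\bG_m$-fixed curves, one must either pass to a $\bG_m$-equivariant resolution and sum weighted contributions over the exceptional divisors, or invoke the localization formula for the Duistermaat-Heckman measure on this complexity-one $T$-variety. A secondary difficulty is verifying K-semistability of $(X_\infty, c\sum L_i)$ when $\alpha=0$; this should be reducible to checking log canonicity of $(X_\infty,\frac{1}{240}\sum L_i)$ at the three torus-fixed points (the $A_8$-point and the two $\frac{1}{9}(1,2)$-points) and then applying interpolation, Theorem~\ref{1}.
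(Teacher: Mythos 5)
Your instability half is essentially sound and close in spirit to the paper: the Futaki invariant of the product test configuration induced by the $\mathbb{G}_m$-action is linear in $c$ and vanishes at $c=0$ by K-polystability of $X_\infty$, so a nonzero linear term destabilizes for every $c\in(0,\tfrac{1}{240})$; and your observation that $\ord_{u=0}(\sum l_j)+2\ord_{v=0}(\sum l_j)=720$ makes the two order conditions equivalent is a nice point the paper leaves implicit. However, there are genuine gaps on the polystability side. First, your claim that non-product destabilizers are ``ruled out by openness of K-semistability (Theorem \ref{4})'' is not a valid step: openness concerns semistability in families and says nothing about which test configurations of the fixed pair $(X_\infty,c\sum L_i)$ can destabilize it. The correct mechanism, and the one the paper uses, is equivariant K-stability (Zhu), combined with passing to the quotient pair $(\mathbb{P}(1,2,9),\tfrac12 C_\infty+c\sum l_i)$ via the double-cover result of Liu--Zhu, and then the complexity-one criterion reducing everything to $\beta$-invariants of $\mathbb{G}_m$-invariant divisors.

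Second, in your $\alpha=0$ case the conclusion does not follow: K-semistability together with vanishing of the Futaki character on the existing torus does not imply K-polystability, since the pair could still specially degenerate along an invariant but non-product direction. The paper rules this out by explicitly computing $\beta$ for \emph{all} invariant divisors on $\mathbb{P}(1,2,9)$: the horizontal divisor $\{u=0\}$, where $A=1-c\,\ord_{u=0}(\sum l_i)$ against $S=1-240c$ (this computation, which you defer, is exactly what identifies $\alpha=0$ with $\ord_{u=0}(\sum l_j)=240$, i.e.\ the actual content of the ``moreover''), the vertical divisor $\{v=0\}$ with $\beta=\tfrac12(1-240c)>0$, and the vertical fibers $D_t=\{v^9=tw^2\}$. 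These vertical checks are entirely absent from your plan, and your proposed substitute (log canonicity at the three torus-fixed points plus interpolation, Theorem \ref{1}) would at best yield K-semistability, not the K-polystability asserted in the statement. To repair the argument you would need to carry out the $\beta$-computations on the quotient pair, as in the paper, or otherwise justify that every invariant degeneration direction has strictly positive $\beta$.
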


\begin{proof}
The main tool here we use is equivariant K-stability (cf. \cite{Zhu21}). Assume we are in the case when $\ord_{u=0}\left(\sum l_j\right)=240$ and $\ord_{u=0}\left(\sum l_j\right)=240$. By \cite[Theorem 1.2]{LZ22}, it suffices to check the K-stability of the pair $(\mb{P}(1,2,9),\frac{1}{2}C_{\infty}+c\sum l_i)$, which is a $\mb{T}$-pair of complexity one. By \cite[Theorem 1.3.9]{CA21}, we only need to compute the $\beta$-invariant of the pair with respect to all $\mb{G}_m$-invariant divisors on $\mb{P}(1,2,9)$ (see also \cite[Theorem 2.9]{zha22} for the statement for pairs). The divisor $\{u=0\}$ is the unique horizontal divisor on $X_{\infty}$. We have that $$A_{(\mb{P}(1,2,9),\frac{1}{2}C_{\infty}+c\sum l_i)}(\{u=0\})=1-240c$$ and that $$S_{(\mb{P}(1,2,9),\frac{1}{2}C_{\infty}+c\sum l_i)}(\{u=0\})=\frac{(3-720c)}{\mtc{O}(1)^2}\int_{0}^1\mtc{O}(1-t)^2dt=1-240c,$$ and thus $\beta_{(\mb{P}(1,2,9),\frac{1}{2}C_{\infty}+c\sum l_i)}(\{u=0\})=0$. For the vertical divisor $\{v=0\}$ we have that $$A_{(\mb{P}(1,2,9),\frac{1}{2}C_{\infty}+c\sum l_i)}(\{v=0\})=1-240c$$ and that $$S_{(\mb{P}(1,2,9),\frac{1}{2}C_{\infty}+c\sum l_i)}(\{v=0\})=\frac{(3-720c)}{\mtc{O}(1)^2}\int_{0}^{\frac{1}{2}}\mtc{O}(1-2t)^2dt=\frac{1}{2}(1-240c).$$ For other vertical divisors $D_t=\{v^9=tw^2\}$ with $\lambda\in\mb{C}^{*}$, similarly, we have that that $\beta_{(\mb{P}(1,2,9),\frac{1}{2}C_{\infty}+c\sum l_i)}(D_t)>0$. This conclude for the case $\ord_{u=0}\left(\sum l_j\right)=\ord_{u=0}\left(\sum l_j\right)=240$. For the other case, the $\beta$-invariant for the horizontal divisor $\{u=0\}$ is non-zero when $0<c<\frac{1}{240}$ so the pair is K-unstable.

\end{proof}

\bibliographystyle{alpha}
\bibliography{citation}

\end{document}